\newcommand{\ignore}[1]{}
\newtheoremstyle{postnum}
  {\topsep}
  {\topsep}
  {\slshape}
  {0pt}
  {\bfseries}
  {:}
  { }
  {\thmname{#1}\thmnote{ (#3)}}
\theoremstyle{postnum}
\newtheoremstyle{prenum}
  {\topsep}
  {\topsep}
  {\slshape}
  {0pt}
  {\bfseries}
  {.}
  { }
  {\thmnumber{#2}\thmname{ #1}\thmnote{ (#3)}}
\theoremstyle{prenum}
\newtheorem{theorem}{Theorem}[section]
\newtheorem{lemma}[theorem]{Lemma}
\newtheorem{proposition}[theorem]{Proposition}
\newtheorem{corollary}[theorem]{Corollary}
\newcommand{\etal}{{\em et al.~}}
\newcommand{\ZZ}{\mathbb{Z}}
\newcommand{\RR}{\mathbb{R}}
\newcommand{\QQ}{\mathbb{Q}}
\newcommand{\CC}{\mathbb{C}}
\newcommand{\ii}{\mathtt{i}}
\DeclareMathOperator{\spn}{span}
\newcommand{\HH}{\mathcal{H}}
\DeclareMathOperator{\tr}{tr}
\newcommand{\R}{\mathds{R}}
\newcommand{\ee}{\mathbf{e}}
\newif\ifnotesw\noteswtrue
\ifnotesw\marginpar[\hfill\(\top\)]{\(\top\)}\fi}%
\ifnotesw\marginpar[\hfill\(\bot\)]{\(\bot\)}\fi}
\newcommand{\mnote}[1]%
    {\ifnotesw\marginpar%
        [{\scriptsize\begin{minipage}[t]{\marginparwidth}
        \raggedleft#1%
                        \end{minipage}}]%
        {\scriptsize\begin{minipage}[t]{\marginparwidth}
        \raggedright#1%
                        \end{minipage}}%
    \fi}
\title{Fractional Revival and Association Schemes}
\author{
Ada Chan\\
\footnotesize{Department of Mathematics and Statistics, York University, Toronto, ON, Canada}
\\
\tt \footnotesize{ssachan@yorku.ca}
\and
Gabriel Coutinho\\
\footnotesize{Department of Computer Science, Universidade Federal de Minas Gerais, Belo Horizonte, MG, Brazil}
\\
\tt \footnotesize{gabriel@dcc.ufmg.br}
\and
Christino Tamon\\
\footnotesize{Department of Computer Science, Clarkson University, Potsdam, NY, USA}
\\
\tt \footnotesize{tino@clarkson.edu}
\and
Luc Vinet \ \& \ Hanmeng Zhan \\
\footnotesize{Centre de Recherches Math\'{e}matiques, Universit\'e de Montr\'eal, Montr\'eal, QC, Canada}\\
\tt \footnotesize{\{vinet,zhanhanm\}@crm.umontreal.ca}
}
\date{\today}
\begin{document}
\maketitle
\begin{abstract}
Fractional revival occurs between two vertices in a graph if a continuous-time quantum walk 
unitarily maps the characteristic vector of one vertex to a superposition of the characteristic 
vectors of the two vertices.
This phenomenon is relevant in quantum information in particular for entanglement generation in spin networks.
We study fractional revival in graphs whose adjacency matrices belong to the Bose-Mesner algebra 
of association schemes.
A specific focus is a characterization of balanced fractional revival 
(which corresponds to maximal entanglement) in graphs that belong to the Hamming scheme.
Our proofs exploit the intimate connections between algebraic combinatorics and orthogonal polynomials.

\medskip
\par\noindent{\em Keywords}: Quantum walk, association scheme, Bose-Mesner algebra, Hamming scheme, Krawtchouk polynomials.
\par\noindent{\em MSC}: 05E30, 05C50. 33C05, 15A16, 81P40.
\end{abstract}



\section{Introduction}

Quantum walk on graphs is a fundamental area in quantum information and computation.
In quantum computation, it provides a natural generalization of Grover's celebrated algorithm to arbitrary graphs (see \cite{nc00}). 
In quantum information, it is important for studying transport problems in quantum spin networks. This was initiated by Bose \cite{b03} in the context of perfect state transfer in quantum spin chains.

A quantum transport problem that is relevant for entanglement generation is fractional revival. 
It is known that entanglement is a useful resource in quantum information theory with many
applications (for example, the teleportation protocol).
Fractional revival is also interesting since it captures both aspects of perfect state transfer and periodicity which are two well-known quantum transport phenomena (see Godsil \cite{g12}). 

Prior to our work,
Genest \etal \cite{gvz16} and Christandl \etal \cite{cvz17} had analytically
 studied the fractional revival phenomenon in quantum spin chains. The graphs they studied may be viewed as weighted paths with (possibly) additional edges connecting vertices at distance two from each other. 
 Based on techniques from orthogonal polynomials, they observed spectral conditions for fractional revival to occur in these weighted graphs.

In this work, we study quantum fractional revival mainly on unweighted graphs.
Our motivation is to understand the role of the underlying graph structure on fractional
revival without the benefit of arbitrary real-valued weights. 
To this end, we study fractional revival from a graph-theoretic perspective and 
develop some algebraic machinery useful for analyzing this phenomenon.
Other works with a similar focus include Bernard \etal \cite{bcltv18} 
and our prior work \cite{cctvz}.

The main combinatorial object we focus on is an association scheme. An association scheme is a set of matrices that satisfy strong regularity relations, which allow for a fairly combinatorial treatment of their spectral properties. More details will be presented in Section \ref{sec:assoc}. We present a full characterization of when fractional revival occurs in a graph that belongs to an association scheme --- a result that provides a way of easily and efficiently checking whether or when it occurs. Additionally, fractional revival in association schemes always features a partition of the vertex set of the graph into pairs of vertices, all of which exhibit fractional revival at the same time.

Following our general description of fractional revival in association schemes, we make a connection to distance-regular graphs and orthogonal polynomials, which in turn leads to our treatment of fractional revival in graphs whose adjacency matrices belong to a special association scheme --- the binary Hamming scheme $\HH(n,2)$. Our main result is a necessary and sufficient condition for balanced fractional revival to occur in this scheme. 
We then use this to provide constructions of explicit families of graphs with balanced fractional revival.
Balanced fractional revival is a natural choice since it corresponds to the generation of maximally entangled states (which are crucial for many quantum information theory protocols).


\section{Preliminaries}

We review some background from algebraic graph theory (see Godsil and Royle \cite{gr01}).
A graph $X$ is given by a set of its vertices $V(X)$ and a set of it edges $E(X)$.
The adjacency matrix of $X$, which we denote by $A(X)$, is a $01$ matrix whose $ab$ entry
is $1$ if $ab \in E(X)$ and is $0$ otherwise.
For a vertex $a \in V(X)$ where $|V(X)| = n$, 
we use $\ee_{a}$ to denote the unit (characteristic) vector of dimension $n$
that is $1$ at position indexed by $a$ and is $0$ elsewhere.

For a graph $X$, the continuous-time quantum walk (or transition) matrix of $X$ is given by
\begin{equation}
U(t) = e^{-\ii t A(X)}
\end{equation}
where $t$ ranges over the reals. 
This was originally studied by Farhi and Gutmann \cite{fg98} in the context of decision trees.

Let $a$ and $b$ be two distinct vertices of $X$. 
We say that $X$ admits {\em fractional revival from $a$ to $b$} at time $\tau > 0$
if for some $\alpha, \beta \in \CC$, with $\beta \neq 0$ and $|\alpha|^2 + |\beta|^2 = 1$, we have
\begin{align} \label{eqn:fr_def}
U(\tau) \ee_a = \alpha \ee_a + \beta \ee_b. 
\end{align}
In this case, we also say {\em $(\alpha,\beta)$-revival occurs from $a$ to $b$ at time $\tau$}.
By factoring a common unimodular phase factor, we may assume $\alpha$ is real.
So, we say $e^{\ii\zeta}(\alpha,\beta)$-revival occurs where $\alpha$ and $\zeta$
are real scalars and $\beta$ is complex.
The fractional revival is called {\em balanced} if $|\alpha| = |\beta| = 1/\sqrt{2}$.
In this case, we may simply say balanced fractional revival occurs with phase $\zeta$.

We say $X$ has $(\alpha,\beta)$-revival if it has $(\alpha,\beta)$-revival from {\em every} vertex 
at the same time. This holds if there is a permutation matrix $T$ 
(with no fixed points) where for some time $\tau$ we have
\begin{equation}
U(\tau) = \alpha I + \beta T.
\end{equation}

We define several other quantum transport properties.
The graph $X$ is called {\em periodic at vertex $a$} at time $\tau$ 
if $\beta = 0$ in \eqref{eqn:fr_def}.
We say $X$ has {\em perfect state transfer} from $a$ to $b$ at time $\tau$
if $\alpha = 0$ in \eqref{eqn:fr_def}.
See Godsil \cite{g12} for a survey of these notions.


\section{Association Schemes} \label{sec:assoc}

A {\em symmetric association scheme} is a set of $n\times n$ symmetric $01$-matrices $\{A_0,...,A_d\}$ satisfying the following properties:
\begin{enumerate}[(i)]
	\item $\sum_{i = 0}^d A_i = J$, the all $1$s matrix.
	\item The identity matrix is one of them, and we always assume $A_0 = I$.
	\item $A_i A_j$ is a linear combination of the matrices in the scheme, for all $i$ and $j$.
\end{enumerate}
Each matrix in the association scheme is called a {\em class of the scheme}. As the classes are symmetric, property (iii) implies that they all commute. By taking products and linear combinations, it follows that the matrices $\{A_0,...,A_d\}$ generate a commutative algebra of matrices, called the Bose-Mesner algebra of the scheme, which we denote by $\cal A$. The association scheme always forms a basis for its Bose-Mesner algebra. This is an algebra of symmetric commuting matrices, thus it can be simultaneously diagonalized. The projectors onto the eigenspaces are also a basis for the algebra, therefore there are $d+1$ of those, which we typically denote by $\{E_0,...,E_d\}$. The Bose-Mesner algebra is closed under ordinary matrix product, and $\{E_0,...,E_d\}$ are the minimal idempotents. It is also closed under the entrywise product of matrices, and the association scheme $\{A_0,...,A_d\}$ is a basis of minimal idempotents for this product. Two square $(d+1)\times (d+1)$ matrices $P$ and $Q$ are typically defined in order to record the relationship between these two bases, as follows:
\begin{itemize}
    \item for all $i$, $A_i = \sum_{j = 0}^d P_{ji} E_j$; and
    \item for all $j$, $E_j = \frac{1}{n} \sum_{i = 0}^d Q_{ij} A_i$. 
\end{itemize}
Note that $PQ = nI$. We refer the reader to \cite{bcn89} for a complete introduction to association schemes, and to \cite{cggv15} for an application of the concept related to quantum walks.

For the result that follows, let $X$ be a graph that belongs to an association scheme, which is to say, $A = A(X)$ is a sum of some of the matrices in an association scheme. Thus $A$ belongs to the Bose-Mesner algebra, and so does $U(t)$, for any $t$. Hence it is a linear combination of matrices in the scheme, and this shall be sufficient for a fairly restricted description of when and how fractional revival can occur.

\begin{theorem} \label{thm:fr_in_scheme} 
Let $X$ be a graph with $A=A(X)$ in the Bose-Mesner algebra of an association scheme $\{A_0,A_1,\ldots,A_d\}$ 
with minimal idempotents $\{E_0,E_1,\ldots,E_d\}$. 
Suppose the eigenvalues of $A$ are given by $\theta_0 \ge \theta_1 \ge \ldots \ge \theta_d$. 

There are real scalars $\alpha,\zeta$ and a complex scalar $\beta$ so that the graph $X$ has $e^{\ii\zeta}(\alpha,\beta)$ fractional revival from $a$ to $b$ at time $\tau$ if and only if both the following conditions hold.
\begin{enumerate}[(a)]
\item the unique class $A_{q}$ of the scheme which is non-zero in the $(a,b)$ entry is a permutation matrix of order $2$ (and so its eigenvalues are $\pm 1$); and \label{cond:1}

\item for all $r \in \{1,...,d\}$, if $A_q E_r = E_r$, then $(\theta_r - \theta_0) \tau \equiv 0 \pmod{2\pi}$, and if  $A_q E_r = -E_r$, then
\begin{equation} \label{eqn:fr_in_scheme}
(\theta_r - \theta_0)\tau 
	\equiv
	2\cos^{-1}(\alpha) \pmod{2\pi}
\end{equation} \label{cond:2}
\end{enumerate}
Note that if the conditions hold, then fractional revival occurs between all pairs of vertices determined by $A_q$, as in fact,
\begin{align}U(\tau) = e^{\ii \zeta} (\alpha I + \beta A_q). \label{eq:1}\end{align}

\begin{proof} 
We first show Condition \eqref{cond:1} is necessary. Assume $X$ has $e^{\ii\zeta}(\alpha,\beta)$ fractional revival from $a$ to $b$ at time $\tau$, where $\alpha \in \RR$. 
Thus, $U(\tau)\ee_{a} = e^{\ii\zeta}(\alpha\ee_{a} + \beta\ee_{b})$.  
Since $U(\tau)$ belongs to the Bose-Mesner algebra of the scheme,
we have $U(\tau) = \sum_{r = 0}^d \eta_{r}A_{r}$ for some constants $\eta_{r}$ (which depend on $\tau$). Suppose $A_{q}$ is the unique matrix in the scheme for which $(A_{q})_{b,a} = 1$. 
Thus, $\eta_{0} = e^{\ii\zeta}\alpha$, $\eta_{q} = e^{\ii\zeta}\beta$, 
and $\eta_{\ell} = 0$ for $\ell \neq 0,q$.  
This shows that 
\begin{equation} \label{eqn:local2global}
U(\tau) = e^{\ii\zeta}(\alpha A_{0} + \beta A_{q}).  
\end{equation}
As $A_{q}$ commutes with $J$, it is a symmetric matrix with row and column sums equal $1$.  
Hence $A_{q}$ is a permutation matrix of order $2$.

Now we assume Condition \eqref{cond:1} holds. Fractional revival between $a$ and $b$ is equivalent to Equation \eqref{eq:1}, which we now show to be equivalent to Condition \eqref{cond:2}.

Let $A_q = \sum_{r = 0}^d \sigma_r E_r$. Again, observe that $\sigma_r = \pm 1$. We have
\begin{equation} \label{eqn:local3global}
U(\tau) = e^{\ii\zeta}(\alpha A_{0} + \beta A_{q}).  
\end{equation}
if and only if, for all $r \in \{0,...,d\}$,
\[e^{-\ii \theta_r \tau} = e^{\ii\zeta}(\alpha + \beta \sigma_r).\]
This is true because the idempotents $\{E_0,...,E_r\}$ form a basis. Now, as $|e^{\ii\zeta}(\alpha \pm \beta)| = 1$ and $\alpha \in \R$, we have $\beta \in \ii \R$, and since $|\alpha|^{2} + |\beta|^{2} = 1$, we may assume $\alpha = \cos \vartheta$ and $\beta = \ii \sin \vartheta$. Hence Equation \eqref{eqn:local3global} is equivalent to, for all $r$, 
\[e^{-\ii \theta_r \tau} = e^{\ii\zeta} e^{\sigma_r \ii  \vartheta}.\]
Fixing $-\theta_0 \tau = \zeta + \vartheta$ (as $\sigma_0 = 1$), the equation above, for each $r \neq 0$, is equivalent to
\begin{enumerate}[(i)]
\item if $\sigma_r = 1$, then $(\theta_r - \theta_0) \tau \equiv 0 \pmod{2\pi}$; and
\item if $\sigma_r = -1$, then $(\theta_r - \theta_0)\tau 
	\equiv	2\vartheta \pmod{2\pi}$,
\end{enumerate}
which is precisely Condition \eqref{cond:2} of the statement.
\end{proof} 
\end{theorem}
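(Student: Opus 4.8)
The plan is to exploit the fact that $U(\tau) = e^{-\ii\tau A}$ lies in the Bose-Mesner algebra, so it is a linear combination of the scheme matrices $A_0,\ldots,A_d$, and simultaneously a linear combination of the idempotents $E_0,\ldots,E_d$. The argument naturally splits into necessity and sufficiency of conditions \eqref{cond:1} and \eqref{cond:2}, with the key observation that \eqref{eqn:fr_def} is a statement about a single $(b,a)$ entry but, because $U(\tau)$ is in the algebra, it forces a global identity.

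First I would establish necessity of \eqref{cond:1}. Suppose $e^{\ii\zeta}(\alpha,\beta)$-revival occurs from $a$ to $b$. Writing $U(\tau) = \sum_r \eta_r A_r$, and letting $A_q$ be the unique class with a $1$ in the $(b,a)$ position, I compare entries: the $(a,a)$ entry forces $\eta_0 = e^{\ii\zeta}\alpha$, the $(b,a)$ entry forces $\eta_q = e^{\ii\zeta}\beta$, and every other $(c,a)$ entry being zero forces $\eta_\ell = 0$ for $\ell \neq 0,q$. Hence $U(\tau) = e^{\ii\zeta}(\alpha I + \beta A_q)$, which is \eqref{eq:1}. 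Since $U(\tau)$ is symmetric and commutes with $J$ (as every element of the Bose-Mesner algebra does, because $J$ is in it), $A_q$ must be a symmetric $01$-matrix with all row and column sums equal to $1$, i.e., a permutation matrix which is its own inverse — a product of disjoint transpositions. This also immediately yields the final remark of the theorem: revival occurs simultaneously between all pairs swapped by $A_q$.

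Next, assuming \eqref{cond:1}, I would show \eqref{eq:1} is equivalent to \eqref{cond:2}. Expand $A_q = \sum_r \sigma_r E_r$ with $\sigma_r \in \{\pm 1\}$ (eigenvalues of an involution), and $I = \sum_r E_r$, $A = \sum_r \theta_r E_r$, so $U(\tau) = \sum_r e^{-\ii\theta_r\tau} E_r$. Since the $E_r$ are linearly independent, \eqref{eq:1} holds iff $e^{-\ii\theta_r\tau} = e^{\ii\zeta}(\alpha + \beta\sigma_r)$ for every $r$. Taking $r=0$ (where $\sigma_0 = 1$, since $E_0 = \frac{1}{n}J$ and $A_q \mathbf{j} = \mathbf{j}$) pins down one phase relation. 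The unimodularity $|\alpha + \beta\sigma_r| = 1$ for both signs $\sigma_r$, together with $\alpha$ real and $|\alpha|^2+|\beta|^2=1$, forces $\beta$ purely imaginary, so I can write $\alpha = \cos\vartheta$, $\beta = \ii\sin\vartheta$ and hence $\alpha + \beta\sigma_r = e^{\sigma_r \ii\vartheta}$. Then $e^{-\ii\theta_r\tau} = e^{\ii\zeta}e^{\sigma_r\ii\vartheta}$; subtracting the $r=0$ relation $e^{-\ii\theta_0\tau} = e^{\ii\zeta}e^{\ii\vartheta}$ gives, for $\sigma_r = 1$, $(\theta_r-\theta_0)\tau \equiv 0 \pmod{2\pi}$ and, for $\sigma_r = -1$, $(\theta_r - \theta_0)\tau \equiv 2\vartheta \pmod{2\pi}$. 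Recognizing $A_q E_r = \sigma_r E_r$ and $2\vartheta = 2\cos^{-1}(\alpha)$ identifies this with \eqref{cond:2}, and the steps are all reversible.

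The main obstacle — and really the only subtle point — is the deduction that $\beta$ must be purely imaginary: one must argue carefully that $|e^{\ii\zeta}(\alpha+\beta)| = |e^{\ii\zeta}(\alpha-\beta)| = 1$ with $\alpha$ real forces $\Re(\beta) = 0$ (unless $\beta = 0$, which is excluded), and then reconcile the resulting parametrization $\alpha = \cos\vartheta$, $\beta = \ii\sin\vartheta$ with the earlier normalization that $\alpha$ is nonnegative — i.e., checking that the freedom in choosing $\vartheta$ and absorbing signs into $\zeta$ does not over- or under-constrain the equivalence. Everything else is bookkeeping with the two bases of the Bose-Mesner algebra and the linear independence of the idempotents. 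One should also note the edge case where some $\sigma_r$ only takes the value $+1$ (i.e., $A_q$ has $+1$ as its only distinct eigenvalue among the $E_r$ that appear), but since $A_q \neq I$ it has $-1$ as an eigenvalue, so this does not occur; still, the argument as structured handles it uniformly.
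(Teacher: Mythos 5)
Your proposal is correct and follows essentially the same route as the paper's proof: comparing the $(c,a)$ entries of $U(\tau)=\sum_r \eta_r A_r$ to force the global identity $U(\tau)=e^{\ii\zeta}(\alpha I+\beta A_q)$, deducing that $A_q$ is an order-two permutation matrix from symmetry and commutation with $J$, and then passing to the idempotent basis with the parametrization $\alpha=\cos\vartheta$, $\beta=\ii\sin\vartheta$ after observing that unimodularity of $\alpha\pm\beta$ forces $\beta$ purely imaginary. The subtleties you flag (reality of $\alpha$ versus the sign freedom absorbed into $\zeta$, and the $r=0$ normalization $\sigma_0=1$) are exactly the points the paper's argument also relies on.
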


We can say a bit more. If there is fractional revival between two vertices in a graph in an association scheme, the result above says that there is a permutation matrix in the scheme that swaps these two vertices. Thus they are strongly cospectral (see \cite[Theorem 11.2]{godsilsmith}). As a consequence, we can apply \cite[Corollary 5.6]{cctvz}, and because $\theta_0$ is always an integer if $X$ is regular, we have the corollary below.

\begin{corollary}\label{cor:integral}
If fractional revival occurs in a graph $X$ belonging to an association scheme at time $\tau$, then all eigenvalues of $X$ are integers, and $\tau$ is a rational multiple of $\pi$.
\qed
\end{corollary}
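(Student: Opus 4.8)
The plan is to assemble three ingredients: the structural consequence of Theorem \ref{thm:fr_in_scheme} (existence of an order-$2$ permutation in the scheme swapping $a$ and $b$), the notion of strong cospectrality, and the cited eigenvalue/period dichotomy from \cite{cctvz}. First I would invoke Theorem \ref{thm:fr_in_scheme}: if fractional revival occurs from $a$ to $b$ in $X$ at time $\tau$, then the unique class $A_q$ nonzero in the $(a,b)$ entry is a permutation matrix $T$ of order $2$, and $U(\tau) = e^{\ii\zeta}(\alpha I + \beta A_q)$. Since $T$ lies in the Bose--Mesner algebra, it commutes with $A = A(X)$, and being a symmetric permutation of order $2$ it swaps $a$ and $b$ (it acts as the transposition $(a\,b)$ on those coordinates, having no other off-diagonal mass in row $a$). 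This is exactly the hypothesis of \cite[Theorem 11.2]{godsilsmith}, which yields that $a$ and $b$ are strongly cospectral in $X$: for every eigenprojector $E_r$ of $A$ one has $E_r \ee_a = \pm E_r \ee_b$, with the signs matching the $\sigma_r \in \{\pm1\}$ appearing in the expansion $A_q = \sum_r \sigma_r E_r$.

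Next I would feed strong cospectrality together with the occurrence of fractional revival into \cite[Corollary 5.6]{cctvz}. That result (for strongly cospectral pairs admitting fractional revival) forces the eigenvalues of $A$ in the \emph{support} of $\ee_a$ — i.e. those $\theta_r$ with $E_r\ee_a \neq 0$ — to satisfy an integrality-type condition; combined with the ratio condition that the differences $\theta_r - \theta_0$ all be rational multiples of $\pi/\tau$ (Condition \ref{cond:2} of Theorem \ref{thm:fr_in_scheme} already records that these differences, scaled by $\tau$, are rational multiples of $2\pi$), one concludes each such $\theta_r$ is an integer and $\tau$ is a rational multiple of $\pi$. The one subtlety is that a priori only the eigenvalues in the support of $\ee_a$ are controlled; here, though, $X$ belongs to an association scheme and is regular, so $\theta_0$ (the valency of $X$) is automatically an integer, and the scheme's structure ties all $\theta_r$ to the same algebraic number field — every eigenvalue of $X$ already appears among $\theta_0,\dots,\theta_d$, and the differences $\theta_r-\theta_0$ being rational multiples of $\pi/\tau$ for the supported $r$, together with integrality of $\theta_0$, propagates integrality to all of them. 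Hence all eigenvalues of $X$ are integers and $\tau \in \pi\QQ$.

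The main obstacle I anticipate is the bookkeeping at the interface between the two cited black boxes: matching the sign pattern $\{\sigma_r\}$ from $A_q = \sum_r \sigma_r E_r$ with the strong-cospectrality signs from \cite[Theorem 11.2]{godsilsmith}, and then checking that the hypotheses of \cite[Corollary 5.6]{cctvz} are met verbatim (in particular that fractional revival in our sense coincides with the notion used there, and that "eigenvalues in the support" can be upgraded to "all eigenvalues" using regularity). None of this requires new computation — it is entirely a matter of confirming that the already-established facts line up — which is why the corollary can be stated with a one-line proof (\qed) referring out to \cite{godsilsmith} and \cite{cctvz}.
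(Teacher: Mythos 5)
Your proposal follows exactly the route the paper takes: fractional revival forces the class $A_q$ to be an order-two permutation matrix swapping $a$ and $b$, hence $a$ and $b$ are strongly cospectral by \cite[Theorem 11.2]{godsilsmith}, and then \cite[Corollary 5.6]{cctvz} together with the integrality of $\theta_0$ (from regularity of $X$) gives integral eigenvalues and $\tau \in \pi\QQ$. The paper's ``proof'' is precisely the paragraph preceding the corollary, so your reconstruction matches it in substance and in the citations used.
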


This takes us immediately to the following characterization.

\begin{theorem}\label{thm:frchar}
Let $\{A_0,...,A_d\}$ be an association scheme whose set of minimal idempotents is given by $\{E_0,...,E_d\}$. Assume one of its classes is a permutation matrix 
of order two, say $A_q$ for some $q$, with spectral decomposition $A_q = \sum_{r = 0}^d \sigma_s E_s$. Let $X$ be a graph in this scheme, with integer eigenvalues $\theta_0 \geq ... \geq \theta_d$. Define
\[g = \gcd\big\{(\theta_0 - \theta_s)\big\}_{s = 1}^d.\]
Then for all integers $m \ge 1$ satisfying the properties
\begin{enumerate}[(i)]
    \item $\{(\theta_0 - \theta_s)/g\}_{\sigma_s = -1}$ are all congruent to the same integer $\mu$ modulo $m$, and
    \item $\{(\theta_0 - \theta_s)/g\}_{\sigma_s = 1}$ are all congruent to $0$ modulo $m$,
\end{enumerate}
it follows that $X$ admits $e^{\ii \zeta}(\alpha,\beta)$ fractional revival at time $\tau = 2 t \pi /m g$, for any integer $t$, where
\[\zeta = t\pi \left(\frac{-2\theta_0 - \mu g}{m g}\right)\quad , \quad \alpha = \cos \left( \frac{t\mu \pi}{m}\right) \quad \text{and} \quad \beta = \ii \sin \left(\frac{t\mu \pi}{m} \right) \]
Moreover, if fractional revival occurs at time $\tau$, then there are integers $m$ and $\mu$ such that (i) and (ii) above hold.
\end{theorem}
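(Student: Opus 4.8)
The plan is to read both directions off Theorem~\ref{thm:fr_in_scheme}, which already characterizes fractional revival in a scheme through the congruences of Condition~\eqref{cond:2}, together with Corollary~\ref{cor:integral}, which forces $\tau$ to be a rational multiple of $\pi$. What remains is to translate those congruences into the divisibility statements (i) and (ii) and to extract the closed forms for $\zeta,\alpha,\beta$. Throughout I write $\theta_0-\theta_s = g\,n_s$ with $n_s\in\ZZ_{\ge 0}$, so that (ii) says $m\mid n_s$ whenever $\sigma_s=1$, and (i) says $n_s\equiv\mu\pmod m$ whenever $\sigma_s=-1$. Note also $\sigma_0=1$ (as $A_qE_0=E_0$), and since $A_q$ is a permutation matrix of order two, $A_q\ne I$, so at least one index $s\ge 1$ has $\sigma_s=-1$.

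For the first half, fix such an $m$ and a $\mu$ as in (i), fix an integer $t$, and propose the time $\tau = 2t\pi/(mg)$ together with the phase $\vartheta$ determined by $\alpha=\cos\vartheta$, $\beta=\ii\sin\vartheta$ and chosen so that $2\vartheta\equiv(\theta_s-\theta_0)\tau\pmod{2\pi}$ for the indices with $\sigma_s=-1$ --- a consistent choice by (i); concretely $\vartheta = -t\mu\pi/m$. One then verifies Condition~\eqref{cond:2} of Theorem~\ref{thm:fr_in_scheme}: for $\sigma_s=1$ one has $(\theta_s-\theta_0)\tau = -2t(n_s/m)\pi\in 2\pi\ZZ$ by (ii); for $\sigma_s=-1$ one has $(\theta_s-\theta_0)\tau\equiv -2t\mu\pi/m\equiv 2\vartheta\pmod{2\pi}$ by (i). Since Condition~\eqref{cond:1} is part of the hypothesis, Theorem~\ref{thm:fr_in_scheme} gives fractional revival and the identity $U(\tau)=e^{\ii\zeta}(\alpha I+\beta A_q)$, and the stated formulas for $\zeta,\alpha,\beta$ fall out of $\zeta=-\theta_0\tau-\vartheta$ after substitution (with $\beta\ne 0$ precisely when $m\nmid t\mu$, which is what makes this genuine fractional revival rather than periodicity).

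For the converse, suppose fractional revival occurs at time $\tau$. By Theorem~\ref{thm:fr_in_scheme} the congruences of Condition~\eqref{cond:2} hold for some $\vartheta$ with $\alpha=\cos\vartheta$; in particular any $s$ with $\sigma_s=-1$ has $\theta_s\ne\theta_0$ (otherwise the $\sigma_s=-1$ clause would force $\alpha=\pm1$, $\beta=0$). By Corollary~\ref{cor:integral}, write $\tau/2\pi = a/b$ in lowest terms. From $(\theta_0-\theta_s)\tau\equiv 0\pmod{2\pi}$ for $\sigma_s=1$ we get $b\mid\theta_0-\theta_s$; subtracting the congruence $(\theta_0-\theta_s)\tau\equiv 2\vartheta$ over two indices with $\sigma_s=\sigma_{s'}=-1$ gives $b\mid\theta_s-\theta_{s'}$. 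Now set $m = b/\gcd(b,g)$, so $mg=\operatorname{lcm}(b,g)$; since $g$ divides every $\theta_0-\theta_s$, combining with the above yields $mg\mid\theta_0-\theta_s$ for $\sigma_s=1$ (this is (ii)) and $mg\mid\theta_s-\theta_{s'}$ for $\sigma_s=\sigma_{s'}=-1$, so $n_s=(\theta_0-\theta_s)/g$ reduces to a single residue $\mu$ modulo $m$ over those indices (this is (i)); finally $\tau = 2t\pi/(mg)$ with $t = a\,g/\gcd(b,g)\in\ZZ$.

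The modular arithmetic is routine; the two places that need care are, first, pinning down the branch and signs so that the displayed closed forms for $\zeta,\alpha,\beta$ come out exactly as written (the natural choice of $\vartheta$ above fixes this), and second --- the one genuinely non-mechanical step --- the choice $m=b/\gcd(b,g)$ in the converse, whose job is to simultaneously absorb the denominator of $\tau$ forced by Condition~\eqref{cond:2} and the gcd $g$ built into the statement. The degenerate cases (no class with $\sigma_s=-1$, or such a class with $\theta_s=\theta_0$) should be kept in view, but both are excluded once $\beta\ne 0$.
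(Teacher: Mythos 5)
Your argument is correct and follows essentially the same route as the paper's: the forward direction is a direct verification of Condition (b) of Theorem~\ref{thm:fr_in_scheme} (which the paper dismisses with ``clearly''), and your choice $m=b/\gcd(b,g)$ in the converse is equivalent to the paper's ``smallest $m$ with $\tau=2t\pi/(mg)$'' --- the paper then invokes $\gcd(t,m)=1$ where you use an lcm/divisibility argument, with the same effect. One caveat on the step you flagged about signs: your (correctly derived) $\vartheta=-t\mu\pi/m$ gives $\beta=\ii\sin\vartheta=-\ii\sin(t\mu\pi/m)$ and $\zeta=-\theta_0\tau-\vartheta=t\pi(-2\theta_0+\mu g)/(mg)$, which is the printed formula with $\mu$ replaced by $-\mu$; so the displayed expressions do not ``fall out exactly as written'' --- one must either read $\mu$ as the common residue of $(\theta_s-\theta_0)/g$ or adjust the signs in the statement (a discrepancy the paper's own proof, which never substitutes back, does not surface; a check on, e.g., $\overline{3K_2}$ confirms your signs).
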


Note that if $\mu \neq 0$, then $g \cdot \gcd(\mu,m)$ divides $\theta_0-\theta_s$ for $s=1,\ldots, d$, so we either have $\gcd(\mu,m)=1$ or $\mu=0$.
In particular, if $\mu = 0$ in the theorem, it describes periodicity (and it always happens choosing the degenerate case $m = 1$ as, after all, the eigenvalues are integers). If $\mu = 1$ and $m = 2$, the theorem describes perfect state transfer. For any other $m$, as long as $\mu \neq 0$, we have fractional revival with both $\alpha$ and $\beta$ different than $0$. If $m=4$ and $\mu = \pm 1$, we have balanced fractional revival.

\begin{proof}
If $m$ and $\mu$ exist satisfying properties $(i)$ and $(ii)$, then clearly Condition \eqref{cond:2} of Theorem \ref{thm:fr_in_scheme} is satisfied at $\tau$, and parameters $\zeta$ (modulo $2 \pi$), $\alpha$ and $\beta$ are determined. Now assume fractional revival occurs at a time $\tau$, which, by Corollary \ref{cor:integral}, we may assume satisfies $\tau = (a/b) \pi$ for some (positive) integers $a$ and $b$. We may further assume $a < 2b$, because a graph with integer eigenvalues is periodic at time $2 \pi$. Let $m$ be the smallest integer so that for some integer $t$, the equality below holds
\[\frac{a}{b} \pi = \frac{2t}{mg} \pi.\]
We must show now that conditions (i) and (ii) hold for some $\mu$. From Condition \eqref{cond:2} of Theorem \ref{thm:fr_in_scheme}, for all $s$ with $\sigma_s = 1$, we have
\[(\theta_0 - \theta_s)\frac{2t}{mg} \pi \equiv 0 \pmod{2 \pi}, \]
and as $t$ and $m$ are coprime, this is equivalent to $(\theta_0 - \theta_r)/g \equiv 0 \pmod{m}$. Likewise, for all $s$ with $\sigma_s = -1$, we have, for some $\vartheta$,
\[(\theta_0 - \theta_s)\frac{2t}{mg} \pi \equiv 2 \vartheta \pmod{2 \pi}, \]
and as $t$ and $m$ are coprime, this is equivalent to the existence of an integer $\mu$ with $(\theta_0 - \theta_s)/g \equiv \mu \pmod{m}$.
\end{proof}

The conditions in Theorem \ref{thm:frchar} are quite descriptive, but we can do better in terms of providing an efficient way of checking whether fractional revival (or its variants) occur.

\begin{lemma}\label{lem:frchar}
Let $\{A_0,...,A_d\}$ be an association scheme whose set of minimal idempotents is given by $\{E_0,...,E_d\}$. Assume one of its classes is a permutation matrix 
of order $2$, say $A_q$ for some $q$, with spectral decomposition $A_q = \sum_{r = 0}^d \sigma_s E_s$. Let $X$ be a graph in this scheme, with integer eigenvalues $\theta_0 \geq ... \geq \theta_d$. Define
\[g = \gcd \big\{(\theta_0 - \theta_s)\big\}_{s = 1}^d.\]
Let $h$ be the integer satisfying
\[hg = \gcd\big\{(\theta_r - \theta_s)\big\}_{\sigma_r = \sigma_s}.\]
Then a positive integer $m$ satisfies Theorem~\ref{thm:frchar} if and only if it divides $h$.
\end{lemma}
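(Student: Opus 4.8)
The plan is to translate conditions (i)--(ii) of Theorem~\ref{thm:frchar} into a single divisibility statement relating $m$ to a gcd of eigenvalue differences, and then to recognize that gcd as $h$. To set things up, I would write $n_s = (\theta_0 - \theta_s)/g$ for $s \in \{0,1,\dots,d\}$, which are integers by the very definition of $g$, with $n_0 = 0$. Let $S_+ = \{s : \sigma_s = +1\}$ and $S_- = \{s : \sigma_s = -1\}$. Two elementary facts will be used repeatedly: $\sigma_0 = +1$, since $A_q$ fixes the all-ones vector, so that $0 \in S_+$; and $S_- \neq \emptyset$, since $A_q$ is a fixed-point-free involution and hence has $-1$ as an eigenvalue.

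The first step is to prove that $m$ satisfies (i)--(ii) of Theorem~\ref{thm:frchar} for some integer $\mu$ if and only if $m$ divides $n_s - n_{s'}$ for every pair $s,s'$ with $\sigma_s = \sigma_{s'}$. For the classes in $S_+$, condition (ii) says $n_s \equiv 0 \pmod m$ for all $s \in S_+$; because $0 \in S_+$ and $n_0 = 0$, this is equivalent to $n_s \equiv n_{s'} \pmod m$ for all $s,s' \in S_+$. For the classes in $S_-$, since $S_-$ is nonempty, the existence of a $\mu$ with $n_s \equiv \mu \pmod m$ for all $s \in S_-$ is equivalent to $n_s \equiv n_{s'} \pmod m$ for all $s,s' \in S_-$ (subtract congruences in one direction; fix any $s_0 \in S_-$ and set $\mu = n_{s_0}$ in the other). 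Combining the two cases yields the reformulation.

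The second step is purely arithmetic. Since $n_s - n_{s'} = (\theta_{s'} - \theta_s)/g$ and $g$ divides $\theta_{s'} - \theta_s = (\theta_0 - \theta_s) - (\theta_0 - \theta_{s'})$, the condition from the first step is equivalent to saying that $m$ divides $\gcd\{(\theta_{s'} - \theta_s)/g : \sigma_s = \sigma_{s'}\} = \frac{1}{g}\gcd\{\theta_{s'} - \theta_s : \sigma_s = \sigma_{s'}\} = \frac{hg}{g} = h$, which is precisely the conclusion of the lemma. (In the degenerate situation where all these differences vanish one reads $h = 0$, and both sides of the equivalence hold for every $m$, so the statement remains correct.)

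I do not anticipate a real obstacle here, as the argument is elementary number theory throughout. The point demanding the most care is the asymmetry between the two conditions: condition (i) only asks the $S_-$ residues to agree with a common but otherwise unconstrained $\mu$, whereas condition (ii) forces the $S_+$ residues to be $0$; it is exactly the presence of the trivial index $0 \in S_+$ with $n_0 = 0$ that lets one rewrite condition (ii) as a statement about differences alone, so that both conditions take the same shape and can be merged. One should also double-check that $g$ is a common divisor of every difference $\theta_{s'} - \theta_s$ that appears (so that the gcd factors out cleanly) and that $h$ is an integer — both immediate from $g \mid (\theta_0 - \theta_s)$ for all $s$.
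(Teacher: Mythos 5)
Your proof is correct and follows essentially the same route as the paper's: both rewrite conditions (i)--(ii) as the single statement that $m$ divides $(\theta_r-\theta_s)/g$ whenever $\sigma_r=\sigma_s$ (using $\sigma_0=1$ and $\theta_0-\theta_0=0$ to absorb the ``congruent to $0$'' clause into a difference condition), and then identify the gcd of those quotients with $h$. Your version merely spells out the two sign classes and the degenerate case in more detail than the paper does.
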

\begin{proof}
The integer $m$ satisfies the properties of Theorem \ref{thm:frchar} if and only if, for $\sigma_s=\sigma_r$,
\[\frac{\theta_r - \theta_s}{g} = \frac{\theta_0 - \theta_s}{g} - \frac{\theta_0- \theta_r}{g} \equiv 0 \pmod m,\]
the last equivalence being true because either $0-0 \equiv 0 \pmod m$ or $\mu - \mu \equiv 0 \pmod m$.   This equation holds exactly when $m$ divides $h$.
\end{proof}

\begin{theorem}\label{thm:frchar_gh}
Let $X$ be a graph in an association scheme containing a permutation matrix of order two.   Let $g$ and $h$ be defined as in Lemma~\ref{lem:frchar}.
Then
\begin{enumerate}[(a)]
\item $h = 1$ if and only if the graph $X$ does not admit fractional revival nor perfect state transfer.

\item $h > 2$ if and only if the graph $X$ admits fractional revival that is different from perfect state transfer.  In particular, $\frac{2\pi}{hg}$ is the minimum time when fractional revival occurs in $X$ 

\item $h$ is even if and only if the graph $X$ admits perfect state transfer.  In particular, when $h=2$, $X$ admits perfect state transfer but no other form of fractional revival.
\item $h$ is doubly even if and only if the graph $X$ admits balanced fractional revival.
\qed
\end{enumerate}
\end{theorem}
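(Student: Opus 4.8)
The plan is to reduce all four equivalences to a single computation: a complete description of the pairs $(\alpha,\beta)$ and times $\tau>0$ at which the identity $U(\tau)=e^{\ii\zeta}(\alpha I+\beta A_q)$ can hold in $X$ (genuine fractional revival being the case $\beta\neq 0$, periodicity the case $\beta=0$). By Theorem~\ref{thm:frchar} together with Lemma~\ref{lem:frchar}, this identity holds at a time $\tau>0$ if and only if $\tau=2t\pi/(mg)$ for some integer $t\ge 1$ and some positive divisor $m$ of $h$, and then $\alpha=\cos(t\mu\pi/m)$, $\beta=\ii\sin(t\mu\pi/m)$, where $\mu$ is the integer attached to $m$ by conditions (i)--(ii) of Theorem~\ref{thm:frchar}. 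Since $A_q$ is an order-two permutation matrix, $A_q\neq I$, so it has a $-1$-eigenvalue and hence $\mu$ is uniquely determined modulo $m$; by the remark following Theorem~\ref{thm:frchar}, $\gcd(\mu,m)=1$ unless $\mu\equiv 0$, and $\mu\equiv 0$ forces $m$ to divide $(\theta_0-\theta_s)/g$ for every $s\ge 1$, hence $m\mid\gcd_s\{(\theta_0-\theta_s)/g\}=1$, i.e., $m=1$.

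First I would extract two facts. Writing $m\mid h$ gives $2t\pi/(mg)=(th/m)\cdot 2\pi/(hg)$ with $th/m\in\ZZ_{\ge 1}$, and conversely $m=h$, $t=s$ attains every $2s\pi/(hg)$; so the achievable times are precisely the positive integer multiples of $2\pi/(hg)$, the smallest of which, $2\pi/(hg)$, forces $m=h$, $t=1$. Next, set $x:=t\mu/m$; since $m\mid h$ we have $x\in\tfrac1h\ZZ$, while for $m=h$ the map $t\mapsto t\mu\pmod h$ is a bijection of $\ZZ/h\ZZ$ (because $\mu$ is a unit mod $h$), so over all admissible triples $(m,\mu,t)$ the residue $x\bmod 1$ takes exactly the values $0,\tfrac1h,\tfrac2h,\dots,\tfrac{h-1}{h}$, the degenerate triple $m=1$, $\mu=0$ contributing only $x\equiv 0$.

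Then I would read off (a)--(d) against the dictionary $\alpha=\cos\pi x$, $\beta=\ii\sin\pi x$, noting that only $x\bmod 1$ enters: periodicity is $x\equiv 0$, genuine fractional revival is $x\not\equiv 0$, perfect state transfer is $x\equiv\tfrac12$, fractional revival distinct from perfect state transfer is $x\notin\{0,\tfrac12\}$, and balanced fractional revival is $x\equiv\tfrac14$ or $\tfrac34$ (all modulo $1$). Some value $k/h$, $0\le k<h$, is nonzero exactly when $h\ge 2$, giving (a); some such value lies outside $\{0,\tfrac12\}$ exactly when $h\ge 3$, and when $h>2$ the minimal time $2\pi/(hg)$ already produces $x=\mu/h$, which is $\not\equiv 0$ (as $h>1$) and $\neq\tfrac12$ (else $\mu\equiv h/2$, contradicting $\gcd(\mu,h)=1$), so fractional revival distinct from perfect state transfer occurs there and nowhere earlier, giving (b); one has $\tfrac12\in\{k/h\}$ iff $2\mid h$, and for $h=2$ the only residues are $0$ and $\tfrac12$ (periodicity and perfect state transfer), giving (c); and $\tfrac14\in\{k/h\}$ --- equivalently $\tfrac34\in\{k/h\}$ --- iff $4\mid h$, giving (d).

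The step I expect to be the main obstacle is the parameter-set computation of the second paragraph: verifying that as $m$ ranges over the divisors of $h$ the attached residues $\mu$ sweep out all of $\tfrac1h\ZZ/\ZZ$ rather than a proper subgroup (this rests on $\mu$ being coprime to $h$ when $m=h$), together with the correct handling of the degenerate triple $m=1,\mu=0$ and of the constraint $\tau>0$. Once this is in place, (a)--(d) reduce to elementary membership checks and no further estimates are needed.
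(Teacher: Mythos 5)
Your argument is correct and is essentially the deduction the paper intends: Theorem~\ref{thm:frchar_gh} is stated with no written proof precisely because it is meant to follow from Theorem~\ref{thm:frchar}, Lemma~\ref{lem:frchar}, and the remark on $\gcd(\mu,m)$, which is exactly the combination you carry out. Your explicit checks --- that $\mu\equiv 0$ forces $m=1$, that the residues $t\mu/m \bmod 1$ sweep out all of $\tfrac1h\ZZ/\ZZ$ (via $\mu$ being a unit mod $h$), and that the minimum time forces $m=h$, $t=1$ --- are the details the paper leaves implicit, and they are handled correctly.
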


\subsection{Distance-Regular Graphs} 

Association schemes can be constructed in several distinct ways. One of them comes from certain graphs and their distance matrices. If $X$ is a graph, let $A_k$ be the $01$ symmetric matrix indexed by vertices with $(i,j)$ entry equal to $1$ if and only if vertex $i$ is at distance $k$ from vertex $j$. Note that $A_0 = I$ and $A_1$ is simply the adjacency matrix. To a graph of diameter $d$ we can associate the set $\{A_0,...,A_d\}$ of its distance matrices. Now in very few special cases, the set of distance matrices of a graph form an association scheme. When this happens, the graph is called {\em distance-regular}. See \cite{bcn89} for more background on distance-regular graphs.

A distance-regular graph $X$ is called {\em primitive} if each of the graphs $X_{k}$ whose adjacency matrix coincide with the $k$-th distance matrix of $X$, $A_k$, are connected. If any of them is disconnected, $X$ is called {\em imprimitive}. If $X$ is imprimitive and $k$-regular with $k \ge 3$, then $X$ is bipartite or $X_{d}$ is a disjoint union of $\ell$-cliques, for some $\ell$.  In the latter case, $X$ is called {\em antipodal}, the cliques in $X_{d}$ are called the fibres of $X$, and the fibre size is $\ell$. (See Theorem 4.2.1 in \cite{bcn89}.)

\begin{proposition} \label{prop:fr_in_drg} 
Let $X$ be a distance-regular graph of diameter $d > 1$. If $X$ has $(\alpha,\beta)$ fractional revival from $a$ to $b$ at time $\tau$, then 
\begin{equation} 
U_{X}(\tau) = \alpha A_{0} + \beta A_{d}, 
\end{equation} 
where $A_{d}$ is the adjacency matrix of $\frac{d}{2}K_{2}$.  

\begin{proof} 
By Theorem \ref{thm:fr_in_scheme}, we know that $U_{X}(\tau) = \alpha A_{0} + \beta A_{q}$ for some $1 \le q \le d$, and that $A_q$ is a permutation matrix of order two with no fixed points. By reasoning exactly as in \cite[Theorem 4.1]{cggv15}, it must be that $q = d$.  
\end{proof}
\end{proposition}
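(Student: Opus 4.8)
The plan is to combine Theorem~\ref{thm:fr_in_scheme} with the structural restrictions on imprimitive distance-regular graphs. By Theorem~\ref{thm:fr_in_scheme}, fractional revival from $a$ to $b$ at time $\tau$ forces $U_X(\tau) = \alpha A_0 + \beta A_q$ where $A_q$ is a permutation matrix of order two with no fixed points. Since $A_q$ is one of the distance matrices of $X$ and is a permutation matrix, every vertex has exactly one vertex at distance $q$ from it; in particular the graph $X_q$ (whose adjacency matrix is $A_q$) is a perfect matching, i.e. a disjoint union of $n/2$ copies of $K_2$. The first step is to observe that this makes $X$ imprimitive (as $X_q$ is disconnected for $d>1$), and moreover $X_q$ consists of cliques of size $2$, so we are in the antipodal case with fibre size $\ell = 2$ and $q = d$ — this is exactly the conclusion, once we rule out $q < d$ and rule out the bipartite-but-not-antipodal alternative.

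The cleanest route, as the authors indicate, is to invoke the reasoning of \cite[Theorem~4.1]{cggv15}: there it is shown (in the context of perfect state transfer, but the argument only uses that $A_q$ is a permutation matrix in the scheme) that the only index $q$ for which $A_q$ can be a permutation matrix in a distance-regular graph of diameter $d>1$ is $q = d$. I would reproduce the key point of that argument: if $A_q$ is a permutation matrix with $q < d$, then for a vertex $a$, its unique $A_q$-neighbour $a'$ satisfies that $a'$ is the unique vertex at distance $q$ from $a$; but by distance-regularity the number of vertices at distance $q$ from $a$ equals the intersection number $k_q$, which equals $1$ only if... and one derives a contradiction with connectedness of $X$ or with $k_1 \ge 2$ and $1 < q < d$. (Concretely: $k_q = 1$ together with the standard recurrences forces $k_i = 1$ for some range of $i$, collapsing the graph to a cycle or path, which is handled by the diameter hypothesis, or one uses that an order-two permutation matrix among the $A_i$ must be $A_d$ by Theorem 4.2.1 of \cite{bcn89} since it makes $X$ imprimitive of the antipodal type with fibre size $2$.) Once $q = d$ is established, $A_d$ is the adjacency matrix of a disjoint union of edges, and since $|V(X)| = n$ and each component is a $K_2$, that union is $\frac{n}{2}K_2$; writing it as $\frac{d}{2}K_2$ in the statement is an abuse — I would simply state $A_d = A(\tfrac{n}{2}K_2)$, or note that the authors likely mean the fibre structure — and conclude $U_X(\tau) = \alpha A_0 + \beta A_d$.

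The main obstacle is making precise why $q = d$ and not some intermediate distance. This is not hard but requires care: the honest statement is that an order-two fixed-point-free permutation matrix $A_q$ in the distance scheme of a connected distance-regular graph of diameter $d > 1$ makes the graph imprimitive, hence (by the classification of imprimitive DRGs, Theorem 4.2.1 of \cite{bcn89}, using that the "fibres" here have size $2$) the graph is antipodal with $A_q = A_d$; the bipartite alternative is excluded because a bipartite imprimitive DRG has its halved graphs as the disconnected distance matrices, and $A_q$ being a permutation matrix (regular of valency $1$) pins down $q = d$ regardless. Since the paper is content to cite \cite{cggv15} for this, I would keep the write-up short: state the reduction to "$A_q$ is a fixed-point-free involution in the scheme," cite the classification, and extract $q = d$, leaving the intersection-number bookkeeping to the reference.
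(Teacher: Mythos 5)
Your proposal is correct and follows essentially the same route as the paper: apply Theorem~\ref{thm:fr_in_scheme} to get $U_X(\tau)=\alpha A_0+\beta A_q$ with $A_q$ a fixed-point-free involution in the distance scheme, then deduce $q=d$ by the argument of \cite[Theorem 4.1]{cggv15} (equivalently, the classification of imprimitive distance-regular graphs). Your extra remarks --- spelling out the antipodal-with-fibre-size-two reasoning and flagging that the matching should be written $\tfrac{n}{2}K_2$ rather than $\tfrac{d}{2}K_2$ --- are sound refinements of what the paper leaves to the citation.
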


If $X$ is a distance-regular graph with corresponding distance matrices $\{A_0,...,A_d\}$, it is not hard to see that there are polynomials $p_0,...,p_d$, with $p_k$ having degree $k$, so that $p_k(A_1) = A_k$. These polynomials are all orthogonal according to a standard choice of inner product, 
because the Schur product between $A_i$ and $A_j$ vanishes, that is, 
$A_i \circ A_j = 0$, for $i \neq j$, and thus $\tr A_i A_j = 0$. If $X$ is antipodal with eigenvalues $\theta_0 > ... > \theta_d$ and corresponding projectors $\{E_0,...,E_d\}$, and $A_d$ is a permutation matrix of order $2$, then \cite[Proposition 11.6.2]{brouwerhaemers} tells us that
\[A_d = \sum_{r = 0}^d (-1)^r E_r.\]
Along with the characterization in Theorem~\ref{thm:frchar_gh}, this provides a very efficient and easy method to check whether a distance-regular graph (DRG) admits fractional revival (FR). We use PST to abbreviate perfect state transfer in the table below.  We summarize the results for the families of distance-regular graphs studied in \cite{cggv15}.
\small{
\begin{flushleft}
\renewcommand{\arraystretch}{1.5}
\begin{tabular}{|l|l|c|c|c|l|}
\hline
 DRG & spectrum&$g$ & $h$ & $\mu$ & FR\\
\hline
  $\overline{nK_2}$ & $\{2n-2,0,-2\}$&$2$ & $n$ & $n-1$ & FR at $\frac{k\pi}{n}$, $k=1,\ldots,n-1$\\
  &&&&& PST at $\frac{\pi}{2}$ iff $n$ is even\\
  \hline
 2-fold cover of $K_n$& $\{n-1, \frac{\delta+\sqrt{\Delta}}{2}, -1, \frac{\delta-\sqrt{\Delta}}{2}\}$&&&&\\
\ $\delta=0$ &&2&1&- & no FR, no PST\\
\ $\delta=-2$&&$\sqrt{n}$&$2$&$1$& no FR, PST at $\frac{\pi}{\sqrt{n}}$\\
\ $\delta=2$,\ $n\equiv 4 \pmod{8}$& &$4$ & $\frac{\sqrt{n}}{2}$& $\frac{\sqrt{n}-2}{4}$& FR at $\frac{\pi}{\sqrt{n}}$, no PST\\
\ $\delta=2$, \ $n\equiv 0 \pmod{8}$& & $2$ & $\sqrt{n}$& $\frac{\sqrt{n}-2}{2}$&  FR at $\frac{\pi}{\sqrt{n}}$, PST at $\frac{\pi}{2}$ \\
\hline
Hadamard graph & $\{n^2,n,0,-n,-n^2\}$ & &&&\\
\quad of order $n^2$ && $n$& 2&1& no FR, PST at $\frac{\pi}{n}$\\
\hline
$n$-cube & $\{n-2j\}_{j=0}^n$ & 2 & 2 & 1 & no FR, PST at $\frac{\pi}{2}$\\
\hline
halved $2d$-cube & $\{\binom{2d}{2} - 2j(2d-j)\}_{j=0}^d$ & 2 & 4 & $2d-1$& balanced FR at $\frac{\pi}{4}$\\ 
&&&&& PST at $\frac{\pi}{2}$\\
\hline
Johnson graph $J(2n,n)$ & $\{(n-j)^2-j\}_{j=0}^n$ & 2 & 1& -& no FR, no PST\\
\hline
Doubled odd graph  & $(-1)^j(n+1-j)$,& 1&1&-&no FR, no PST\\ 
\quad on $(2n+1)$ points & \quad ($j\neq n+1$) &&&&\\
\hline
\end{tabular}
\renewcommand{\arraystretch}{1}
\end{flushleft}}


\section{Hamming Scheme}

In this section, we focus on the Hamming scheme and provide characterization when balanced
fractional revival occurs.

Consider families of graphs whose vertices are the binary sequences of length $n$, where $n \ge 1$.
The graph $X_{r}$, for $r=0,\ldots,n$, has edges connecting all pairs of vertices with
Hamming distance $r$. The graph $X_1$ is also known as the $n$-cube.  Let $A_{r} = A(X_{r})$ be the adjacency matrix of $X_{r}$.
This describes the well-known Hamming scheme $\HH(n,2)$.
So, the Bose-Mesner algebra of $\HH(n,2)$ is spanned by the set of
matrices $\mathcal{A} = \{A_{0},A_{1},\ldots,A_{n}\}$.
Let $\{E_0,\ldots,E_n\}$ be the set of minimal idempotents of this scheme, where
\[A_1 E_s= (n-2s)E_s
\quad \text{and} \quad
A_n E_s = (-1)^s E_s, \quad \text{for $s=0,\ldots,n$.}\]
Then, we have (see Stanton \cite{s01}, Section 2)
\begin{equation}
A_{r} = \sum_{s=0}^{n} p_{r}(s)E_{s},
\ \hspace{.5in} \
r = 0,\ldots,n.
\end{equation}
where $p_{r}(x)$ is the Krawtchouk polynomial of degree $r$.
A more customary notation for the Krawtchouk polynomial is $p_r(x,q,n)$ which specifies the arity $q$ and the dimension $n$. 
We suppress $q$ since we focus exclusively on $q=2$ and we will include $n$ only if necessary.

It is known that
(see Stanton \cite{s01}, Equation (2.3a))
\begin{equation}
p_{r}(s) = \binom{n}{r} 
	{}_{2}F_{1}\left( \begin{tabular}{c|} $-r,-s$ \\ $-n$ \end{tabular} \ 2 \right),
\ \mbox{ where } \ 
{}_{2}F_{1}\left( \begin{tabular}{c|} $a,b$ \\ $c$ \end{tabular} \ x \right) =
	\sum_{m=0}^{\infty} \frac{a^{\overline{m}}b^{\overline{m}}}{c^{\overline{m}}} \frac{x^{m}}{m!}.
\end{equation}
Here, ${}_{2}F_{1}$ is the Gaussian hypergeometric function
and $z^{\overline{m}} = z(z+1) \ldots (z+m-1)$ is the $m$-th rising factorial power of $z$.

\medskip
We state some useful properties of the Krawtchouk polynomials.

\begin{proposition} \label{prop:krawtchouk}
For $n \ge 2$, the Krawtchouk polynomials $p_{r}(s)$ satisfy:
\begin{enumerate}[(i)]
\item (MacWilliams and Sloane \cite{ms77}, Chapter 5, Section 7, Theorem 15) \\
For $0 \le r \le n$,
\begin{equation} \label{eqn:krawtchouk}
p_{r}(s) = \sum_{h=0}^{r} (-2)^{h} \binom{n-h}{r-h} \binom{s}{h}
\ \hspace{.5in} \
s = 0,\ldots,n.
\end{equation}
Therefore,
\begin{equation}
p_{r}(1) - p_{r}(0) = -2\binom{n-1}{r-1}  
\end{equation}.

\item (Chihara and Stanton \cite{cs90}, Proposition 2.1) \\
For $1\le r\le n$, 
\begin{equation}\label{eqn_rec_1}
    p_r(s, n) - p_r(s+1, n) = 2p_{r-1}(s, n-1),
    \ \hspace{0.5in} \ 
s = 0, \ldots, n-1.
\end{equation}

\item (Chihara and Stanton \cite{cs90}, Proposition 2.3) \\
For $1 \le r \le n$, 
\begin{equation}\label{eqn_rec_2}
p_{r}(s,n) - p_{r}(s+2,n) = 4p_{r-1}(s,n-2),
\ \hspace{0.5in} \ 
s = 0, \ldots, n-2.
\end{equation}
Therefore,
\begin{equation}
p_{r}(s) - p_{r}(s+2) = 4\sum_{h=0}^{r-1} (-2)^{h}\binom{n-2-h}{r-1-h} \binom{s}{h},
\ \hspace{0.5in} \ 
s = 0, \ldots, n-2.
\qed
\end{equation}
\end{enumerate}
\end{proposition}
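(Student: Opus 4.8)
The plan is to derive all three identities, and their ``Therefore'' consequences, from the classical generating function for the Krawtchouk polynomials,
\[\sum_{r=0}^{n} p_{r}(s)\, z^{r} = (1-z)^{s}(1+z)^{n-s},\]
which follows directly from the hypergeometric expression recorded above and is in any case standard (see \cite{ms77}, Chapter~5). Each of (i), (ii), and (iii) then reduces to a one-line manipulation of this identity followed by extraction of the coefficient of $z^{r}$.

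For part (i), I would rewrite the right-hand side as
\[(1-z)^{s}(1+z)^{n-s} = (1+z)^{n}\Bigl(1-\tfrac{2z}{1+z}\Bigr)^{s} = \sum_{h=0}^{s}\binom{s}{h}(-2z)^{h}(1+z)^{n-h},\]
and read off the coefficient of $z^{r}$, which is precisely $\sum_{h=0}^{r}(-2)^{h}\binom{n-h}{r-h}\binom{s}{h}$; terms with $h>s$ drop out since $\binom{s}{h}=0$, so the stated range $0\le h\le r$ is the correct one for every $s\in\{0,\dots,n\}$. The displayed consequence is then immediate by substitution: $p_{r}(0)=\binom{n}{r}$ and $p_{r}(1)=\binom{n}{r}-2\binom{n-1}{r-1}$, whence $p_{r}(1)-p_{r}(0)=-2\binom{n-1}{r-1}$.

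For parts (ii) and (iii) I would telescope the generating function in the first parameter. Using $\sum_{r}p_{r}(s+k,n)z^{r}=(1-z)^{s+k}(1+z)^{n-s-k}$, one gets
\[\sum_{r}\bigl(p_{r}(s,n)-p_{r}(s+1,n)\bigr)z^{r} = (1-z)^{s}(1+z)^{n-s-1}\bigl[(1+z)-(1-z)\bigr] = 2z\,(1-z)^{s}(1+z)^{(n-1)-s},\]
and likewise
\[\sum_{r}\bigl(p_{r}(s,n)-p_{r}(s+2,n)\bigr)z^{r} = (1-z)^{s}(1+z)^{n-s-2}\bigl[(1+z)^{2}-(1-z)^{2}\bigr] = 4z\,(1-z)^{s}(1+z)^{(n-2)-s}.\]
The right-hand sides are $2z$ (respectively $4z$) times the generating function of the Krawtchouk polynomials of dimension $n-1$ (respectively $n-2$), so comparing the coefficient of $z^{r}$ yields $p_{r}(s,n)-p_{r}(s+1,n)=2p_{r-1}(s,n-1)$ and $p_{r}(s,n)-p_{r}(s+2,n)=4p_{r-1}(s,n-2)$, with the convention $p_{-1}\equiv 0$ covering $r=0$. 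Finally, the ``Therefore'' in (iii) follows by feeding the explicit formula from part (i), applied with parameters $(r-1,n-2)$, into $p_{r-1}(s,n-2)$ and multiplying by $4$.

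There is no genuine obstacle here: the only care needed is in the binomial-coefficient bookkeeping, so that each identity holds on its full stated range of $s$ (including boundary values), and in tracking the shift $n\mapsto n-1, n-2$ of the dimension parameter in (ii)--(iii). Alternatively, the three main identities may simply be quoted from \cite{ms77} and \cite{cs90} as indicated in the statement, leaving only the short substitution arguments for the ``Therefore'' clauses to be written out.
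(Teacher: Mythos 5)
Your proof is correct. Note, however, that the paper itself gives no proof of this proposition: all three main identities are quoted verbatim from the cited sources (MacWilliams--Sloane for (i), Chihara--Stanton for (ii) and (iii)), the statement ends with a \qed, and the only content not literally in the references is the two ``Therefore'' displays, which follow by exactly the substitutions you describe ($p_r(0)=\binom{n}{r}$, $p_r(1)=\binom{n}{r}-2\binom{n-1}{r-1}$, and plugging the formula of (i) with parameters $(r-1,n-2)$ into (iii)). What you add is a self-contained, unified derivation from the generating function $\sum_r p_r(s)z^r=(1-z)^s(1+z)^{n-s}$: part (i) by expanding $(1+z)^n\bigl(1-\tfrac{2z}{1+z}\bigr)^s$ and extracting the coefficient of $z^r$, and parts (ii)--(iii) by telescoping in $s$, where the factors $(1+z)-(1-z)=2z$ and $(1+z)^2-(1-z)^2=4z$ produce both the constants $2$ and $4$ and the index shift $r\mapsto r-1$, $n\mapsto n-1$ or $n-2$. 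I checked the coefficient bookkeeping and it is right; the convention $\binom{s}{h}=0$ for $h>s$ does make the range $0\le h\le r$ correct for all $s$. The trade-off is the usual one: the paper's citation route is shorter and defers to the literature, while your argument makes the proposition verifiable without consulting the references and makes transparent why the three identities are really one generating-function identity viewed three ways.
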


From Theorem~\ref{thm:frchar_gh}, we see that for a graph $X$ with eigenvalue $\theta_0,\ldots,\theta_r$, the parameters
\[\gcd \big\{(\theta_0-\theta_s)\big\}_{s=1}^n\]
and
\[\gcd \big\{(\theta_r - \theta_s)\big\}_{\sigma_r = \sigma_s}.\]
play a role in characterizing fractional revival. If, in addition, $X$ is a class of the binary Hamming scheme $\HH(n,2)$, then we can express these two parameters using the eigenvalues of the scheme.

To simplify the computation in the following proof, we define $p_r(s,n)=0$ if $r<0$ or $r>n$.
We can then extend Equations~(\ref{eqn_rec_1}) and (\ref{eqn_rec_2}) to any integer $r$.
\begin{lemma}\label{lem:gcd}
Let $A=A_{r_1}+ \cdots + A_{r_{\ell}}$, where $0 < r_1 < \cdots < r_{\ell}\le n$, in $\HH(n,2)$. Suppose $A \neq A_{n}$ and $A=\sum_{s=0}^n \theta_s E_s$, where $E_0,\ldots, E_n$ are the minimal idempotents of $\HH(n,2)$.
Define
\[g = \gcd \big\{(\theta_0-\theta_s)\big\}_{s=1}^n,\]
and $h$ to be the integer satisfying
\[hg = \gcd\big\{(\theta_s-\theta_{s+2})\big\}_{s=0}^{n-2}.\]
Then
\[g = \gcd\Big\{2^j  \sum_{i=1}^{\ell} p_{r_i-j}(0, n-j)\Big\}_{j=1}^{r_{\ell}},\]
and
\[hg =
 \gcd\Big\{2^{j+1} \sum_{i=1}^{\ell'} p_{r_i-j}(0,n-j-1)\Big\}_{j=1}^{r_{\ell'}},
 \quad
\text{where} 
\  
 \ell' = \begin{cases} \ell & \text{if $r_{\ell}<n$,}\\ \ell-1 & \text{if $r_{\ell}=n$.}\end{cases}\]
In particular, $g$ is even and divides $2^{r_{\ell}}$, and $hg$ is doubly even and divides $2^{r_{\ell'}+1}$.
\end{lemma}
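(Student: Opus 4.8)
The plan is to express the relevant eigenvalue differences $\theta_0 - \theta_s$ and $\theta_s - \theta_{s+2}$ in terms of Krawtchouk polynomials, and then compute the two gcd's by carefully choosing which indices $s$ to use. Recall that if $A = A_{r_1} + \cdots + A_{r_\ell}$, then $\theta_s = \sum_{i=1}^\ell p_{r_i}(s)$, so $\theta_0 - \theta_s = \sum_{i=1}^\ell (p_{r_i}(0) - p_{r_i}(s))$ and $\theta_s - \theta_{s+2} = \sum_{i=1}^\ell (p_{r_i}(s) - p_{r_i}(s+2))$. The key tool is the telescoping consequence of the recurrence \eqref{eqn_rec_1}: iterating $p_r(s,n) - p_r(s+1,n) = 2 p_{r-1}(s,n-1)$ gives $p_r(0,n) - p_r(s,n) = 2 \sum_{t=0}^{s-1} p_{r-1}(t, n-1)$, and more generally a $j$-fold iteration expresses $p_r(0,n) - p_r(s,n)$ as a sum of $2^j p_{r-j}(\cdot, n-j)$ terms. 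The clean way to extract $2^j p_{r-j}(0,n-j)$ as the ``leading'' contribution is to evaluate the difference at a suitable small value of $s$ (essentially $s$ a power of $2$, or $s=1$ first and then using the recurrence in the second variable).

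The first step is to establish the formula for $g$. I would argue that $g$ divides each $2^j \sum_{i=1}^\ell p_{r_i - j}(0, n-j)$ by showing, via the iterated recurrence \eqref{eqn_rec_1}, that each such quantity is an integer combination of the differences $\theta_0 - \theta_s$; conversely, every $\theta_0 - \theta_s$ is an integer combination of the $2^j \sum_i p_{r_i-j}(0,n-j)$, using the expansion \eqref{eqn:krawtchouk} (which writes $p_r(s) - p_r(0)$ as a combination of $\binom{s}{h}$ with coefficients $(-2)^h\binom{n-h}{r-h}$, and summing over $i$). The divisibility in both directions gives equality of the gcd's. For the $j=1$ term we get exactly $2 \sum_i p_{r_i-1}(0,n-1) = -\sum_i (p_{r_i}(1)-p_{r_i}(0)) = \theta_0 - \theta_1$ by the ``Therefore'' line in Proposition~\ref{prop:krawtchouk}(i), which anchors the computation; since this term is divisible by $2$ and every later term is divisible by $2^j$ with $j \ge 1$, we conclude $g$ is even; and since $p_{r_i - j}(0,n-j) = 0$ once $j > r_\ell$, the gcd is over finitely many terms, the last being $2^{r_\ell} p_0(0, n - r_\ell) = 2^{r_\ell}$ (when $r_\ell = r_i$ for the top index), so $g \mid 2^{r_\ell}$.

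The second step, for $hg$, is analogous but uses \eqref{eqn_rec_2}: $p_r(s,n) - p_r(s+2,n) = 4 p_{r-1}(s,n-2)$, and its iteration expresses $\theta_s - \theta_{s+2}$ as combinations of $2^{j+1} p_{r_i - j}(\cdot, n-j-1)$. The case distinction on $\ell'$ enters because $A_n$ contributes $p_n(s) - p_n(s+2) = (-1)^s - (-1)^{s+2} = 0$, so the $A_n$ summand drops out entirely and only $r_1, \ldots, r_{\ell'}$ survive; this is where the hypothesis $A \neq A_n$ is needed (otherwise $A - A_n = 0$ and $hg$ is a gcd of an empty set). The same two-way divisibility argument gives the formula; the factor $4 = 2^2$ in the base case forces $hg$ to be doubly even, and the top term $2^{r_{\ell'}+1} p_0(0, n - r_{\ell'} - 1) = 2^{r_{\ell'}+1}$ gives $hg \mid 2^{r_{\ell'}+1}$.

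The main obstacle I expect is the bookkeeping in the converse divisibility directions: showing that each $\theta_0 - \theta_s$ (resp.\ $\theta_s - \theta_{s+2}$) lies in the ideal generated by the claimed generating set. The forward direction (generators are combinations of eigenvalue differences) is a direct telescoping; the reverse requires inverting the relationship, i.e.\ expressing $p_{r-j}(0,n-j)$-type terms back in terms of the $\theta$-differences, which is exactly what the explicit expansions \eqref{eqn:krawtchouk} and the ``Therefore'' line after \eqref{eqn_rec_2} are there to supply --- one writes $\theta_0 - \theta_s = \sum_{h=1}^{r_\ell} (-2)^h \binom{s}{h} \big(\sum_i \binom{n-h}{r_i-h}\big)$ and recognizes $2^h \sum_i \binom{n-h}{r_i - h}$ as built from $2^h \sum_i p_{r_i - h}(0, n-h)$ via \eqref{eqn:krawtchouk} evaluated at $s=0$ (where $p_{r-h}(0,n-h) = \binom{n-h}{r-h}$), up to lower-order $2^{h'} (\cdot)$ corrections with $h' < h$ that are handled inductively. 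Keeping track of this triangular change of basis, and verifying that the induced ideals genuinely coincide rather than one merely containing the other, is the delicate part; everything else is routine.
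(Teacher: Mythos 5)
Your plan is sound and proves the lemma; the difference from the paper is in execution rather than in the underlying idea. The paper's proof is iterative: it first replaces $\gcd\{\theta_0-\theta_s\}_{s=1}^{n}$ by the gcd of the consecutive differences $\{\theta_s-\theta_{s+1}\}_{s=0}^{n-1}$, applies \eqref{eqn_rec_1} once to write these as $2\sum_i p_{r_i-1}(s,n-1)$, then repeatedly peels off the $s=0$ term and takes consecutive differences again, each round contributing one generator $2^j\sum_i p_{r_i-j}(0,n-j)$ and pushing an extra factor of $2$ into the residual gcd, until after $r_\ell$ rounds the surviving polynomial is constant and the process terminates with the generator $2^{r_\ell}$ (and analogously for $hg$, starting from \eqref{eqn_rec_2}). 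Your version collapses this iteration into a single change of basis: by \eqref{eqn:krawtchouk}, $\theta_s-\theta_0=\sum_{h=1}^{r_\ell}(-2)^h\binom{s}{h}\sum_i\binom{n-h}{r_i-h}$, and since the coefficient matrix $\bigl(\binom{s}{h}\bigr)$ is unitriangular over $\ZZ$, the set $\{\theta_0-\theta_s\}$ and the claimed generating set span the same $\ZZ$-module, so their gcds coincide. This is legitimate and arguably more transparent. One reassurance: the ``lower-order corrections handled inductively'' that you flag as the delicate point do not actually arise, because $p_{r-h}(0,n-h)=\binom{n-h}{r-h}$ exactly; the coefficients of $\binom{s}{h}$ are, up to sign, precisely the generators in the statement, and the reverse containment is just Newton's forward-difference (binomial) inversion, equivalently the identity $2^j\sum_i p_{r_i-j}(0,n-j)=\sum_{t=0}^{j}(-1)^{j+t}\binom{j}{t}\theta_t$ obtained by iterating \eqref{eqn_rec_1}. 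The rest of your outline --- the role of $A\neq A_n$ and the case split on $\ell'$ coming from $p_n(s)-p_n(s+2)=0$, the divisibility conclusions from the $j=1$ (resp.\ factor-$4$) generator and the top generator $2^{r_\ell}$ (resp.\ $2^{r_{\ell'}+1}$) --- matches the paper.
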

\begin{proof}
First we have
\[g = \gcd\Big\{\theta_s - \theta_{s+1}\Big\}_{s=0}^{n-1} 
= \gcd\Big\{\sum_{i=1}^{\ell} \big(p_{r_i}(s,n)-p_{r_i}(s+1,n)\big)\Big\}_{s=0}^{n-1}.\]
Applying Equation~(\ref{eqn_rec_1}) gives
\begin{eqnarray*}
g&=&\gcd\Bigg\{2\sum_{i=1}^{\ell}p_{r_i-1}(0,n-1),2\sum_{i=1}^{\ell}p_{r_i-1}(1,n-1),\ldots,2\sum_{i=1}^{\ell}p_{r_i-1}(n-1,n-1)\Bigg\}\\
&=& \gcd\Bigg\{2\sum_{i=1}^{\ell}p_{r_i-1}(0,n-1), 
\gcd\Big\{2\sum_{i=1}^{\ell} \big(p_{r_i-1}(s,n-1)-p_{r_i-1}(s+1,n-1)\big)\Big\}_{s=0}^{n-2}\Bigg\}.
\end{eqnarray*}
Applying Equation~(\ref{eqn_rec_1}) repeatedly gives
\begin{eqnarray*}
g &=& \gcd\Bigg\{2\sum_{i=1}^{\ell}p_{r_i-1}(0,n-1), 
\gcd\Big\{2^2\sum_{i=1}^{\ell} p_{r_i-2}(s,n-2)\Big\}_{s=0}^{n-2}\Bigg\}\\
&=&\gcd\Bigg\{2\sum_{i=1}^{\ell}p_{r_i-1}(0,n-1), 2^2\sum_{i=1}^{\ell} p_{r_i-2}(0,n-2),\gcd\Big\{2^2\sum_{i=1}^{\ell} \big(p_{r_i-2}(s,n-2)-p_{r_i-2}(s+1,n-2)\big)\Big\}_{s=0}^{n-3}\Bigg\}\\
&&\\
&=& \cdots\\
&&\\
&=& \gcd\Bigg\{2 \sum_{i=1}^{\ell}p_{r_i-1}(0,n-1),
    2^2 \sum_{i=1}^{\ell}p_{r_i-2}(0,n-2),\cdots,2^{r_{\ell}-1}\sum_{i=1}^{\ell}p_{r_i-r_{\ell}+1}(0,n-r_{\ell}+1),\\
    && \qquad 
     2^{r_{\ell}} \gcd\Big\{\sum_{i=1}^{\ell}p_{r_i-r_{\ell}}(s,n-r_{\ell})\Big\}_{s=0}^{n-r_{\ell}}\Bigg\}.
\end{eqnarray*}
Since $p_{r_i-r_{\ell}}(s,n-r_{\ell})=0$ for $i=1,\ldots, \ell-1$, and $p_{r_{\ell}-r_{\ell}}(s,n-r_{\ell})=p_0(s,n-r_{\ell})=1$ for $s=0,1,\cdots,n-r_{\ell}$, the last equation reduces to 
\[g=\gcd\Bigg\{2 \sum_{i=1}^{\ell}p_{r_i-1}(0,n-1),
    2^2 \sum_{i=1}^{\ell}p_{r_i-2}(0,n-2),\cdots,2^{r_{\ell}-1}\sum_{i=1}^{\ell}p_{r_i-r_{\ell}+1}(0,n-r_{\ell}+1), 2^{r_{\ell}}\Bigg\}.\]
In particular, $2$ divides $g$ and $g$ divides $2^{r_{\ell}}$.

To compute $hg$, first observe that $p_n(s,n)=p_n(s+2,n)$, for $s=0,\ldots,n-2$.
Hence 
\[
\theta_s-\theta_{s+2} = \sum_{i=1}^{\ell'} \big(p_{r_i}(s,n) - p_{r_i}(s+2,n) \big),
\quad \text{where}\ \ell' = \begin{cases} \ell & \text{if $r_{\ell}<n$,}\\ \ell-1 & \text{if $r_{\ell}=n$.}\end{cases}\]
By Equation (\ref{eqn_rec_2}), we have
\begin{eqnarray*}
hg &=& \gcd\Bigg\{4\sum_{i=1}^{\ell'}p_{r_i-1}(0,n-2),4\sum_{i=1}^{\ell'}p_{r_i-1}(1,n-2),\cdots, 4\sum_{i=1}^{\ell'}p_{r_i-1}(n-2,n-2)\Bigg\}\\
&=&
\gcd\Bigg\{2^2\sum_{i=1}^{\ell'}p_{r_i-1}(0,n-2), \gcd\Big\{2^2\sum_{i=1}^{\ell'}\big(p_{r_i-1}(s,n-2)-p_{r_i-1}(s+1,n-2)\big)\Big\}_{s=0}^{n-3}\Bigg\}.
\\
\end{eqnarray*}
Similar to the computation of $g$, we apply Equation (\ref{eqn_rec_1}) repeatedly to get
\[hg = \gcd\Big\{2^2 \sum_{i=1}^{\ell'}p_{r_i-1}(0,n-2),2^3 \sum_{i=1}^{\ell'} p_{r_i-2}(0,n-3),\cdots,
2^{r_{\ell'}}\sum_{i=1}^{\ell'}p_{r_i-r_{\ell'}+1}(0,n-r_{\ell'}),2^{r_{\ell'}+1}\Big\}.\]
In particular, $4$ divides $hg$ and $hg$ divides $2^{r_{\ell'}+1}$.
\end{proof}

For a positive integer $m$, let $\alpha_2(m)$ denote the largest $k$ such that $2^k$ divides $m$. We now give formulas for $g$ and $hg$ in terms of binomial coefficients.

\begin{proposition}
Let $A=A_{r_1}+ \cdots + A_{r_{\ell}}$, where $0 < r_1 < \cdots < r_{\ell}\le n$, in $\HH(n,2)$.
If $A\neq A_n$ then 
\[\log_2(g) = \min\left\{j + \alpha_2 (\sum_{i=1}^{\ell}\binom{n-j}{r_i-j}): j=1,2,\cdots,r_{\ell}\right\},\]
and
\[\log_2(hg) = \min\left\{j + 1 + \alpha_2(\sum_{i=1}^{\ell'}\binom{n-j-1}{r_i-j}): j=1,2,\cdots,r_{\ell'}\right\},\]
where
\[
\ell'=
\begin{cases}
\ell & \text{if $r_{\ell}<n$,}\\
\ell-1 & \text{if $r_{\ell}=n$.}
\end{cases}
\]
\end{proposition}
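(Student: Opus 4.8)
The plan is to read the statement off Lemma~\ref{lem:gcd} by passing to $2$-adic valuations. The two inputs I need are the standard facts that $\alpha_2(2^j x)=j+\alpha_2(x)$ and that $\alpha_2$ turns $\gcd$ into $\min$, i.e.\ $\alpha_2(\gcd(S))=\min_{s\in S}\alpha_2(s)$ for any finite set $S$ of nonzero integers.

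First I would specialize the Krawtchouk polynomials at $0$. Putting $s=0$ in Equation~(\ref{eqn:krawtchouk}) (equivalently, in the hypergeometric formula for $p_r(s)$, where only the $m=0$ term of the series survives) gives $p_r(0,m)=\binom{m}{r}$ for every dimension $m$, with the convention $\binom{m}{r}=0$ for $r<0$, which matches the convention $p_r(\cdot,m)=0$ for $r<0$ adopted just before Lemma~\ref{lem:gcd}. Substituting $p_{r_i-j}(0,n-j)=\binom{n-j}{r_i-j}$ and $p_{r_i-j}(0,n-j-1)=\binom{n-j-1}{r_i-j}$ into the two conclusions of Lemma~\ref{lem:gcd} rewrites them as
\[
g=\gcd\Big\{2^j\sum_{i=1}^{\ell}\binom{n-j}{r_i-j}\Big\}_{j=1}^{r_\ell},
\qquad
hg=\gcd\Big\{2^{j+1}\sum_{i=1}^{\ell'}\binom{n-j-1}{r_i-j}\Big\}_{j=1}^{r_{\ell'}}.
\]

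Next I would note that Lemma~\ref{lem:gcd} also records $g\mid 2^{r_\ell}$ and $hg\mid 2^{r_{\ell'}+1}$, so $g$ and $hg$ are honest powers of $2$ and $\log_2 g=\alpha_2(g)$, $\log_2(hg)=\alpha_2(hg)$ are well defined. To apply the gcd-to-min rule I would first check that no inner sum vanishes, so that the valuations involved are finite: for $1\le j\le r_\ell$ the summand with $i=\ell$ equals $\binom{n-j}{r_\ell-j}\ge 1$ since $0\le r_\ell-j\le n-j$, and for $1\le j\le r_{\ell'}$ the summand with $i=\ell'$ equals $\binom{n-j-1}{r_{\ell'}-j}\ge 1$, using that $A\neq A_n$ forces both $\ell'\ge 1$ and $r_{\ell'}\le n-1$. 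Applying $\alpha_2(\gcd(S))=\min_{s\in S}\alpha_2(s)$ and then $\alpha_2(2^j x)=j+\alpha_2(x)$ to the two displays yields precisely the claimed formulas. There is no substantive obstacle here — the argument is essentially a translation of Lemma~\ref{lem:gcd} into valuation language — and the only points requiring a little care are the bookkeeping around the convention $\binom{m}{r}=0$ for $r<0$ (so that summands with $r_i<j$ drop out harmlessly), the verification that every inner sum is strictly positive so that the minimum runs over finite valuations, and the small deductions $\ell'\ge 1$ and $r_{\ell'}\le n-1$ from the hypothesis $A\neq A_n$.
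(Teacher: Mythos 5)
Your proposal is correct and takes essentially the same route as the paper, whose one-line proof likewise just substitutes $p_r(0,m)=\binom{m}{r}$ into Lemma~\ref{lem:gcd} and uses that $g$ and $hg$ are powers of $2$ to convert the gcds into minima of $2$-adic valuations. The extra checks you supply (that each inner sum is strictly positive and that $A\neq A_n$ gives $\ell'\ge 1$ and $r_{\ell'}\le n-1$) are details the paper leaves implicit, and they are verified correctly.
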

\begin{proof}
The result follows since $g$ and $hg$ are powers of $2$
and $p_r(0,n)= \binom{n}{r}$, for $r=0,1,\ldots,n$.
\end{proof}
Using Theorem~\ref{thm:frchar_gh}, we get the following characterization of simple graphs in $\HH(n,2)$ that have fractional revival and the minimum time fractional revival can occur.

\begin{theorem}
Let $X=X_{r_1}\cup \cdots \cup X_{r_{\ell}}$, where $0< r_1< \cdots < r_{\ell}\le n$, in $\HH(n,2)$, and
\[
\ell'=
\begin{cases}
\ell & \text{if $r_{\ell}<n$,}\\
\ell-1 & \text{if $r_{\ell}=n$.}
\end{cases}
\]
Suppose $X\ne X_n$.  
Then $X$ has fractional revival if and only if 
\[\min\left\{j + \alpha_2(\sum_{i=1}^{\ell'} \binom{n-j-1}{r_i-j}): j=1,2,\cdots,r_{\ell'}\right \} \ge \min\left\{j + \alpha_2(\sum_{i=1}^{\ell}\binom{n-j}{r_i-j}): j=1,2,\cdots,r_{\ell}\right\}.\]
Moreover, if equality holds, then $X$ has perfect state transfer but no other form of fractional revival, and if strict inequality holds, then $X$ has perfect state transfer, as well as balanced fractional revival.
\qed
\end{theorem}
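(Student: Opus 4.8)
The plan is to read everything off from the Proposition immediately above together with the classification of Theorem~\ref{thm:frchar_gh}. First I would check that the relevant hypotheses hold: the binary Hamming scheme $\HH(n,2)$ always contains the fixed-point-free permutation matrix $A_n$ of order two (it pairs each binary word with its complement), with spectral decomposition $A_n=\sum_{s}(-1)^sE_s$, so the quantities $g$ and $h$ are defined exactly as in Lemma~\ref{lem:gcd} and Theorem~\ref{thm:frchar_gh}. Moreover the assumption $X\ne X_n$ is precisely the hypothesis $A\ne A_n$ needed for the preceding Proposition, and it also forces $\ell'\ge 1$ (so that $r_{\ell'}\ge r_1\ge 1$), hence the left-hand minimum in the statement ranges over a nonempty index set.

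The crux is a bookkeeping identification. By Lemma~\ref{lem:gcd}, both $g$ and $hg$ are powers of $2$, hence so is $h=hg/g$; write $\log_2 h = k\ge 0$. The preceding Proposition states precisely that $\log_2 g$ equals the right-hand minimum of the displayed inequality and that $\log_2(hg)$ equals one plus the left-hand minimum. Subtracting, the displayed inequality is literally the assertion $\log_2(hg)-1\ge \log_2 g$, i.e.\ $k\ge 1$, i.e.\ $h\ge 2$; equality there corresponds to $h=2$, and strict inequality to $k\ge 2$, i.e.\ $h\ge 4$ (using that $h$ is a power of $2$, so there is no intermediate value such as $h=3$).

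It then remains to translate these three cases through Theorem~\ref{thm:frchar_gh}. Part~(a) says $X$ admits fractional revival (here perfect state transfer, the $\alpha=0$ case, counts as a form of fractional revival) if and only if $h\ne 1$, i.e.\ $h\ge 2$ in this scheme, which is exactly the displayed inequality. If equality holds then $h=2$, and the last sentence of part~(c) gives that $X$ has perfect state transfer but no other form of fractional revival. If the inequality is strict then $h\ge 4$ is doubly even, so part~(d) yields balanced fractional revival, while $h$ being even gives perfect state transfer by part~(c). There is no genuine obstacle in this argument; the one point requiring care is the observation (supplied by Lemma~\ref{lem:gcd}) that $h$ is a power of $2$ throughout the Hamming scheme, since this is what collapses the general classification of Theorem~\ref{thm:frchar_gh} into the clean trichotomy $h=1$, $h=2$, $h\ge 4$, and in particular makes ``having fractional revival'' coincide with ``having perfect state transfer'' in this setting.
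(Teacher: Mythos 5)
Your proposal is correct and follows exactly the route the paper intends (the paper states the theorem with no written proof beyond the remark that it follows from Theorem~\ref{thm:frchar_gh} and the preceding Proposition): you identify the two minima with $\log_2 g$ and $\log_2(hg)-1$, observe that $h$ is a power of $2$ so the trichotomy collapses to $h=1$, $h=2$, $h\ge 4$, and then read off the conclusions from parts (a), (c), (d) of Theorem~\ref{thm:frchar_gh}. The details you supply, including the check that $X\ne X_n$ makes $\ell'\ge 1$, are accurate and fill in precisely what the paper leaves implicit.
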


\begin{theorem}\label{thm:gfr_tight}
Let $X=X_{r_1}\cup \cdots \cup X_{r_{\ell}}$, where $0< r_1< \cdots < r_{\ell}\le n$, in $\HH(n,2)$, and
\[
\ell'=
\begin{cases}
\ell & \text{if $r_{\ell}<n$,}\\
\ell-1 & \text{if $r_{\ell}=n$.}
\end{cases}
\]
Suppose $X\ne X_n$. If $X$ has fractional revival at minimum time $\tau$, then $\tau = \pi/2^k$ for some positive integer $k\le r_{\ell'}$.
\end{theorem}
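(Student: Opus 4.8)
The plan is to read the minimum time of fractional revival directly off the integer $h$ produced by Theorem~\ref{thm:frchar_gh}. By that theorem, $X$ admits fractional revival different from perfect state transfer precisely when $h>2$, and in that case the minimum time at which any fractional revival occurs is $\frac{2\pi}{hg}$; when $h=2$ the only form of fractional revival is perfect state transfer, which by Corollary~\ref{cor:integral} and the discussion after Theorem~\ref{thm:frchar} first occurs at time $\pi/g$ (the case $m=2$, $\mu=1$). In either case the minimum time has the form $\frac{2\pi}{hg}$, since $h=2$ gives $\frac{2\pi}{2g}=\pi/g$. So it suffices to show that $hg$ is a power of $2$, say $2^{k+1}$, with exponent at most $r_{\ell'}+1$; then $\tau=\frac{2\pi}{hg}=\frac{2\pi}{2^{k+1}}=\pi/2^k$ with $0\le k\le r_{\ell'}$, noting that $k\ge 1$ is forced because we are in the case where fractional revival (including perfect state transfer) actually occurs, so $h\ge 2$ and hence $hg=2^{k+1}$ with $k+1\ge 2$.

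The arithmetic input is exactly Lemma~\ref{lem:gcd}: since $X\ne X_n$, the adjacency matrix $A=A_{r_1}+\cdots+A_{r_\ell}$ satisfies $A\ne A_n$, so the lemma applies and tells us that $hg$ is doubly even and divides $2^{r_{\ell'}+1}$. Being a divisor of a power of $2$, $hg$ is itself a power of $2$; being doubly even, $hg$ is divisible by $4$; and dividing $2^{r_{\ell'}+1}$ forces the exponent to be at most $r_{\ell'}+1$. Writing $hg=2^{k+1}$ with $1\le k\le r_{\ell'}$ then gives the claimed $\tau=\pi/2^k$. I would also remark that the hypothesis ``$X$ has fractional revival'' is what guarantees $h\ge 2$ (equivalently $hg\ge 4$, consistent with the doubly-even conclusion), so the degenerate case $h=1$ of Theorem~\ref{thm:frchar_gh}(a) — no fractional revival, no perfect state transfer — is excluded by assumption, and the minimum-time statement is not vacuous.

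The only genuine subtlety, and the step I would be most careful about, is reconciling the two regimes in Theorem~\ref{thm:frchar_gh}: whether ``fractional revival at minimum time'' should be interpreted as including perfect state transfer or not. If one insists the fractional revival be distinct from perfect state transfer, then $h>2$, hence $h\ge 4$, so $hg=2^{k+1}$ with $k+1\ge 3$, i.e. $k\ge 2$; the conclusion $\tau=\pi/2^k$ with $2\le k\le r_{\ell'}$ still holds and is in fact slightly stronger. If one allows perfect state transfer as a form of fractional revival (which is the convention used throughout the paper, since periodicity and PST are treated as special cases of FR), then $h=2$ is possible, $k=1$, and $\tau=\pi/2$. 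Either way the stated bound $\tau=\pi/2^k$ with $1\le k\le r_{\ell'}$ is correct, so the proof goes through; I would simply state the case split explicitly so the reader sees that nothing is lost. Apart from this bookkeeping, the argument is a one-line consequence of Lemma~\ref{lem:gcd} together with part of Theorem~\ref{thm:frchar_gh}, so there is no computational obstacle to speak of.
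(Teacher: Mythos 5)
Your proof is correct and follows essentially the same route as the paper: cite Theorem~\ref{thm:frchar_gh} for the minimum time $2\pi/(hg)$ and Lemma~\ref{lem:gcd} for the fact that $hg$ is a doubly even divisor of $2^{r_{\ell'}+1}$, hence $hg=2^{k+1}$ with $1\le k\le r_{\ell'}$. The paper's proof is a two-line version of yours; your explicit handling of the $h=2$ (perfect state transfer only) case is a reasonable clarification but not a different argument.
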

\begin{proof}
From Theorem~\ref{thm:frchar_gh}, $2\pi/hg$ is the minimum time when fractional revival occurs in $X$,
and by Lemma~\ref{lem:gcd}, $hg$ divides  $2^{r_{\ell'}+1}$.
\end{proof}


\subsection{The Importance of Being Balanced}

Balanced fractional revival is the most natural and relevant variant of fractional revival useful for generating maximally entangled states in quantum networks. In this section, we derive necessary and sufficient conditions for balanced fractional revival to occur in a union of graphs in $\HH(n,2)$.

\begin{proposition} \label{prop:fr_hamming}
Let $X=X_{r_1}\cup \cdots \cup X_{r_{\ell}}$, where $0< r_1< \cdots < r_{\ell}\le n$, in $\HH(n,2)$
Suppose $X\ne X_n$. 
Then $X$ has $e^{\ii\zeta}(\cos\pi/4, \pm\ii\sin\pi/4)$-revival at time $\pi/2^{k}$
if and only if the following conditions hold.
\begin{enumerate}[(i)]
\item \label{eqn:uno} 
	\begin{equation} 
	\sum_{i=1}^{\ell} \binom{n-1}{r_{i}-1} \equiv \pm 2^{k-2}\pmod{2^{k}}. 
	\end{equation}

\item \label{eqn:dos} For all $j = 1, \ldots, k-1$, 
	\begin{equation}
	\sum_{i=1}^{\ell} \binom{n-1-j}{r_{i}-j} \equiv 0 \pmod{2^{k-j}}.
	\end{equation}

\item \label{eqn:tres} The phase $e^{\ii \zeta}$ satisfies
	\begin{equation} 
	\zeta + \frac{\sum_{i=1}^{\ell} \binom{n}{r_{i}}\pi}{2^k} \equiv \mp \frac{\pi}{4} \pmod{2\pi}. 
	\end{equation}

\end{enumerate}
\end{proposition}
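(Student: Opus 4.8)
The plan is to specialize Theorem~\ref{thm:frchar} and Lemma~\ref{lem:frchar} to the case of balanced fractional revival, which, as noted after Theorem~\ref{thm:frchar}, corresponds to $m=4$ and $\mu = \pm 1$. First I would recall from Proposition~\ref{prop:fr_in_drg} and the structure of $\HH(n,2)$ that the unique class that can act as the order-two permutation matrix is $A_n$, whose eigenvalue on $E_s$ is $\sigma_s = (-1)^s$; thus the indices $s$ with $\sigma_s = -1$ are the odd ones and those with $\sigma_s = 1$ are the even ones. By Theorem~\ref{thm:frchar}, balanced fractional revival at time $\tau = 2t\pi/(mg)$ with $m=4$, $\mu=\pm 1$, $t=1$ occurs exactly when $g$ is the gcd of the $\theta_0 - \theta_s$ and conditions (i), (ii) of that theorem hold with $m=4$; writing $\tau = \pi/2^k$ forces $mg = 2^{k+1}$, i.e.\ $g = 2^{k-1}$, so the balanced case at time $\pi/2^k$ is governed by $g = 2^{k-1}$ together with the congruences $(\theta_0 - \theta_s)/g \equiv \pm 1 \pmod 4$ for $s$ odd and $\equiv 0 \pmod 4$ for $s$ even.

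Next I would translate the gcd conditions into the binomial-coefficient language already developed. Here the key ingredient is the identity $\theta_0 - \theta_s = \sum_{i=1}^\ell (p_{r_i}(0,n) - p_{r_i}(s,n))$ together with the telescoping-type expansion behind Lemma~\ref{lem:gcd}: from Proposition~\ref{prop:krawtchouk}(ii) and its iterates, $\theta_s - \theta_{s+1} = 2\sum_i p_{r_i-1}(s,n-1)$, and more generally the $j$-fold application expresses the relevant differences in terms of $2^j \sum_i p_{r_i-j}(\cdot, n-j)$, whose value at argument $0$ is $2^j \sum_i \binom{n-j}{r_i-j}$. The crucial point is that $g = 2^{k-1}$ is equivalent to $\alpha_2\big(\sum_i \binom{n-j}{r_i-j}\big) + j \ge k-1$ for all $j$ and $= k-1$ for some $j$; the precise bookkeeping I would do is to check that the single residue condition at $j=1$, namely $\sum_i \binom{n-1}{r_i-1} \equiv \pm 2^{k-2} \pmod{2^k}$, encodes simultaneously that $2^{k-1}$ divides all $\theta_0 - \theta_s$ and that the quotient is $\pm 1 \bmod 4$ on the odd-index part — because $p_r(0) - p_r(1) = -2\binom{n-1}{r-1}$ controls exactly the first difference $\theta_0 - \theta_1$, which is the case $s=1$, odd. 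Conditions (ii) for $j = 1,\ldots,k-1$ then say $2^{k-j}$ divides $\sum_i \binom{n-1-j}{r_i-j}$, which is precisely the divisibility needed to guarantee $g$ is not larger than $2^{k-1}$ would allow from the even-index contributions and from the higher differences; these come from iterating Proposition~\ref{prop:krawtchouk}(ii) on $\theta_s - \theta_{s+1}$. I would argue the equivalence in both directions: the congruences imply Condition~\eqref{cond:2} of Theorem~\ref{thm:fr_in_scheme} holds at $\tau = \pi/2^k$ with $\alpha = \cos(\pi/4)$, and conversely balanced FR at $\pi/2^k$ forces these congruences via the uniqueness of $m,\mu$ in Theorem~\ref{thm:frchar}.

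Finally, condition (iii) is the phase normalization. In the proof of Theorem~\ref{thm:fr_in_scheme} the phase is fixed by $-\theta_0 \tau = \zeta + \vartheta$ with $\alpha = \cos\vartheta$; here $\theta_0 = \sum_i \binom{n}{r_i}$, $\tau = \pi/2^k$, and $\vartheta = \pm\pi/4$ (the sign matching the $\pm$ in (i)), so $\zeta \equiv -\theta_0 \pi/2^k \mp \pi/4 \pmod{2\pi}$, which is exactly the stated congruence after rearranging. I expect the main obstacle to be the careful $2$-adic bookkeeping in the second paragraph: showing that the compact congruence in (i) is equivalent to the combination of "$g = 2^{k-1}$" and "$\mu = \pm 1 \bmod 4$" requires tracking how $\alpha_2$ of the various $\sum_i \binom{n-j}{r_i - j}$ interact across $j$, and in particular verifying that conditions (ii) are exactly what is needed to rule out $g$ being a smaller power of $2$ (which would put FR at an earlier time) without over-constraining. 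The Krawtchouk recurrences of Proposition~\ref{prop:krawtchouk} and the already-proven Lemma~\ref{lem:gcd} should make this manageable, but the indexing (which $j$, which parity of $s$, the shift $n-j$ versus $n-j-1$) is where I would be most careful.
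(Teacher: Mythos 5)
Your proposal is correct and follows essentially the same route as the paper's proof: specialize Theorem~\ref{thm:frchar} to $m=4$, $\mu=\pm1$, $t=1$ (forcing $g=2^{k-1}$ at time $\pi/2^k$), use Lemma~\ref{lem:gcd} to translate $g=2^{k-1}$ and $2^{k+1}\mid hg$ into the binomial congruences (i) and (ii) via the parity telescoping $\theta_0-\theta_s=(\theta_0-\theta_1)+(\theta_1-\theta_3)+\cdots$, and read off the phase (iii) from the formula for $\zeta$ in Theorem~\ref{thm:frchar}. The $2$-adic bookkeeping you flag as the main obstacle is handled in the paper exactly by the observation $g=\gcd(\theta_0-\theta_1,\,hg)$, which lets condition (i) (the $j=1$ datum alone) pin down $g$ once (ii) guarantees $hg\equiv 0\pmod{2^{k+1}}$.
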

\begin{proof}
Let $g$ and $h$ be as defined in Lemma~\ref{lem:gcd}.   Then both $g$ and $hg$ are powers of $2$.

Suppose $X$ has $e^{\ii\zeta}(\cos\pi/4, \pm\ii\sin\pi/4)$-revival at time $\pi/2^{k}$.  
Then by Theorem~\ref{thm:gfr_tight}
\[k \leq r_{\ell'} \quad  \text{where}\ 
\ell'=
\begin{cases}
\ell & \text{if $r_{\ell}<n$,}\\
\ell-1 & \text{if $r_{\ell}=n$.}
\end{cases}
\]
By Theorem~\ref{thm:frchar}, there exist integers $m$, $t$ and $\mu$ such that
\[
\frac{t\mu\pi}{m} \equiv \pm \frac{\pi}{4} \pmod{2\pi}, \quad \frac{2t\pi}{mg}=\frac{\pi}{2^k} \quad \text{and}\quad \gcd(\mu,m)=1.
\]
Hence $m/t=4$, $g=2^{k-1}$ and, by Theorem~\ref{thm:frchar_gh}, $hg \equiv 0 \pmod{2^{k+1}}$.

Since
\[
\theta_0-\theta_s = 
\begin{cases}
(\theta_0-\theta_1) + (\theta_1-\theta_3) + \cdots + (\theta_{s-2}-\theta_s) & \text{if $s$ is odd,}\\
(\theta_0-\theta_2)  + \cdots + (\theta_{s-2}-\theta_s)  & \text{if $s$ is even,}
\end{cases}
\]
we have 
\[
g=\gcd( \theta_0-\theta_1, gh) = 2^{k-1}.
\]
Consequently,
\[\theta_0-\theta_1 = 2\sum_{i=1}^{\ell} \binom{n-1}{r_i-1} \equiv \pm 2^{k-1} \pmod{2^{k+1}}
\]
and Condition~(\ref{eqn:uno}) holds.

Condition~(\ref{eqn:dos}) follows directly from 
the expression of $hg$ given in Lemma~\ref{lem:gcd} and  $k \leq r_{\ell'}$.   Condition~(\ref{eqn:tres}) comes from the expression of $\zeta$ given in Theorem~\ref{thm:frchar}.

Conversely, if Conditions~(\ref{eqn:uno}) to (\ref{eqn:tres}) hold then $g=2^{k-1}$, $hg \equiv 0 \pmod{2^{k+1}}$ and
\[
\frac{\theta_0-\theta_1}{g} \equiv \mu \pmod{4} \quad \text{for some odd integer $\mu$.}
\]
Hence $m=4$, $t=1$ and $\mu$ satisfy Theorem~\ref{thm:frchar}.
\end{proof}

\subsection{Distance Graphs}
In this section, we focus on balanced fractional revival on a single distance graph in $\HH(n,2)$. In particular, we characterize all distance graphs $X_r$ that admit balanced fractional revival at time $\pi/4$ and $\pi/8$, respectively, in terms of $n$ and $r$. 

We first cite a useful result from number theory.

\begin{theorem} \label{thm:kummer}
(Kummer; see Dickson \cite{dickson}, page 270) \\
Let $p$ be a prime. The largest integer $k$ so that $p^{k}$ divides $\binom{n}{m}$ is the
number of carries in the addition of $n-m$ and $m$ in base $p$ representation.
\qed
\end{theorem}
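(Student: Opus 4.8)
The plan is to derive Kummer's theorem from Legendre's formula for the $p$-adic valuation of a factorial: writing $v_p(N)$ for the exponent of $p$ in a positive integer $N$, one has $v_p(N!) = \sum_{i\ge 1}\lfloor N/p^i\rfloor$. Setting $a=m$ and $b=n-m$, so that $n=a+b$, this gives immediately
\[
v_p\!\left(\binom{n}{m}\right) = v_p(n!)-v_p(a!)-v_p(b!) = \sum_{i\ge 1}\left(\left\lfloor\frac{a+b}{p^i}\right\rfloor - \left\lfloor\frac{a}{p^i}\right\rfloor - \left\lfloor\frac{b}{p^i}\right\rfloor\right),
\]
so the statement reduces to identifying each summand with the presence or absence of a carry at one digit position in the base-$p$ addition of $m$ and $n-m$.

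First I would establish the elementary identity, valid for every $i\ge 1$,
\[
\left\lfloor\frac{a+b}{p^i}\right\rfloor - \left\lfloor\frac{a}{p^i}\right\rfloor - \left\lfloor\frac{b}{p^i}\right\rfloor = \iverson{(a\bmod p^i)+(b\bmod p^i)\ge p^i}\in\{0,1\},
\]
which follows by splitting $a$ and $b$ into quotient plus remainder modulo $p^i$ and noting that the two remainders sum to an integer in $\{0,\dots,2p^i-2\}$. Next I would recognise the right-hand side as the carry emitted by the schoolbook addition algorithm: writing $a=\sum_{j\ge 0} a_jp^j$ and $b=\sum_{j\ge 0} b_jp^j$ for the base-$p$ expansions, $c_0=0$, and $c_{j+1}=\lfloor(a_j+b_j+c_j)/p\rfloor\in\{0,1\}$ for the carry leaving position $j$, a comparison of $\sum_{l=0}^{j}(a_l+b_l)p^l$ with $p^{j+1}$ shows $c_{j+1}=\iverson{(a\bmod p^{j+1})+(b\bmod p^{j+1})\ge p^{j+1}}$. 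Summing the previous identity over $i=j+1\ge 1$ then gives $\sum_{j\ge 0}c_{j+1}$, which is exactly the number of carries in adding $m$ and $n-m$, as claimed. An alternative route is to use $v_p(N!)=(N-s_p(N))/(p-1)$, with $s_p$ the base-$p$ digit sum, so that $v_p\!\left(\binom{n}{m}\right)=(s_p(m)+s_p(n-m)-s_p(n))/(p-1)$, and then observe that each carry decreases the combined digit sum by exactly $p-1$.

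The only real work is the carry lemma of the second step. It is routine but needs to be argued with care, since a carry at one position may trigger a carry at the next, and one must check that every carry event is counted exactly once, namely at the index $i$ one larger than the digit position it leaves. Legendre's formula and the remainder arithmetic of the first step are entirely standard, so I do not anticipate any genuine obstacle beyond this bookkeeping.
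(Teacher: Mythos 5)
Your argument is correct. Note, however, that the paper does not prove this statement at all: it is quoted as a classical theorem of Kummer with a citation to Dickson and an immediate \qed, so there is no proof in the paper to compare against. Your derivation --- Legendre's formula, the identity $\lfloor (a+b)/p^i\rfloor-\lfloor a/p^i\rfloor-\lfloor b/p^i\rfloor=\iverson{(a\bmod p^i)+(b\bmod p^i)\ge p^i}$, and the identification of that indicator with the carry out of position $i-1$ --- is the standard and complete route; the carry lemma you flag as the ``only real work'' is handled correctly by your characterization $c_{j+1}=\iverson{(a\bmod p^{j+1})+(b\bmod p^{j+1})\ge p^{j+1}}$, which sidesteps any double-counting worries about cascading carries since each position's outgoing carry is expressed directly in terms of the low-order remainders rather than recursively. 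The digit-sum alternative you mention is equally valid and slightly slicker, but either way the statement is established.
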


Recall that, for positive integer $m$, $\alpha_2(m)$ denotes the largest $k$ such that $2^k$ divides $m$. Let $(m)_2$ denote the binary representation of $m$. The next result gives a necessary condition for balanced fractional revival to occurs at $X_r$.

\begin{lemma}
Let $X_r$ be a connected graph in $\mathcal{H}(n,2)$. If $X_r$ has balanced fractional revival, then $n$ is odd, and $\alpha_2(n-1) = \alpha_2(r-1)$.
\end{lemma}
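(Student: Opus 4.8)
The plan is to apply Proposition~\ref{prop:fr_hamming} to the single distance graph $X=X_r$, which corresponds to the case $\ell=1$, $r_1=r$, $\ell'=1$. Balanced fractional revival occurs iff it occurs at some time $\pi/2^k$ with $k\le r$, and Proposition~\ref{prop:fr_hamming}(\ref{eqn:uno}) then reads $\binom{n-1}{r-1}\equiv\pm 2^{k-2}\pmod{2^k}$, while condition~(\ref{eqn:dos}) reads $\binom{n-1-j}{r-j}\equiv 0\pmod{2^{k-j}}$ for $j=1,\dots,k-1$. Note that for balanced fractional revival to be defined we need $k\ge 2$; and condition~(\ref{eqn:uno}) in particular forces $\alpha_2\!\big(\binom{n-1}{r-1}\big)=k-2$, i.e.\ $\binom{n-1}{r-1}$ is divisible by $2^{k-2}$ but not $2^{k-1}$.

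First I would pin down $\alpha_2\big(\binom{n-1}{r-1}\big)$ via Kummer's theorem (Theorem~\ref{thm:kummer}): this exponent equals the number of carries when adding $(r-1)$ and $(n-r)$ in base $2$, so it is $0$ exactly when there are no carries, i.e.\ when the binary supports of $r-1$ and $n-r$ are disjoint and their sum is $n-1$ with no borrowing. To extract the divisibility statements from conditions~(\ref{eqn:uno}) and~(\ref{eqn:dos}), the cleanest route is to use the contiguous relation $\binom{n-1-j}{r-j}=\binom{n-j}{r-j+1}\cdot\frac{?}{?}$... more directly, I would use Pascal's identity repeatedly to relate $\binom{n-1}{r-1}$, $\binom{n-2}{r-1}$, $\binom{n-3}{r-2}$, $\dots$ — exactly the quantities appearing in (\ref{eqn:dos}) with $j=1,2,\dots$ — and combine the congruences. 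The key computation: condition~(\ref{eqn:dos}) with $j=1$ gives $2\mid\binom{n-2}{r-1}$ when $k\ge 3$, with $j=k-1$ gives $2\mid\binom{n-k}{r-k+1}$, and together with~(\ref{eqn:uno}) these force a very rigid carry pattern in the base-$2$ additions, which should collapse to: $n-1$ and $r-1$ have the same $2$-adic valuation. Concretely, $\alpha_2(n-1)=\alpha_2(r-1)$ is equivalent (by Kummer applied to $\binom{n-1}{r-1}$ versus lower terms) to the statement that $r-1$ and $n-r$ share the same lowest set bit, which is what rules out the "$\binom{n-1}{r-1}$ odd but some lower one even" configurations and conversely.

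The oddness of $n$ should fall out first and easily: if $n$ were even then $n-1$ is odd, so its lowest bit is bit $0$; I would show this is incompatible with the chain of congruences — e.g.\ condition~(\ref{eqn:uno}) with the minimal admissible $k$ together with the periodicity argument forces $g=2^{k-1}\ge 2$, hence $\theta_0-\theta_1=2\binom{n-1}{r-1}$ must be divisible by $4$, i.e.\ $\binom{n-1}{r-1}$ even, i.e.\ at least one carry in adding $r-1$ and $n-r$; but when $n$ is even, the top bit behaviour... — more robustly, I would invoke the known fact (or rederive via Kummer and Lucas) that $X_r$ in $\HH(n,2)$ can have any nontrivial fractional revival only when the relevant $2$-adic valuations line up, and the alignment is impossible when $n-1$ is odd unless $r-1$ is also odd, which would then be handled uniformly by the valuation equality. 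I expect the main obstacle to be the bookkeeping in the Kummer/carry analysis: translating the simultaneous congruences of Proposition~\ref{prop:fr_hamming}(\ref{eqn:uno})--(\ref{eqn:dos}) into carry-count statements and checking they are jointly satisfiable precisely when $\alpha_2(n-1)=\alpha_2(r-1)$ (and force $n$ odd) requires a careful case split on the binary digits of $r-1$ and $n-r$ near their lowest set bits. The cleanest phrasing will likely be: write $\alpha_2(r-1)=a$ and $\alpha_2(n-1)=b$; show $b<a$ makes $\binom{n-2}{r-1}$ carry-free in a way that violates~(\ref{eqn:dos}) at $j=1$, and $b>a$ makes $\binom{n-1}{r-1}$ itself have valuation forcing $k$ too large to satisfy $k\le r$ together with~(\ref{eqn:uno}), leaving only $a=b$, which simultaneously forces $n-1$ and $r-1$ to have a common factor of $2$ only if it is present, and the "no carry into a high bit" constraint then forces $n$ odd precisely when $a=b=0$ is excluded — i.e.\ the parity of $n$ is determined once the valuations match.
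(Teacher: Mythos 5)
You have correctly identified the paper's route: specialize Proposition \ref{prop:fr_hamming} to $\ell = 1$, observe that conditions (i) and (ii) with $j=1$ amount to $\alpha_2\bigl(\tbinom{n-1}{r-1}\bigr) = k-2$ and $\alpha_2\bigl(\tbinom{n-2}{r-1}\bigr) \ge k-1$, hence to the strict inequality $\alpha_2\bigl(\tbinom{n-2}{r-1}\bigr) > \alpha_2\bigl(\tbinom{n-1}{r-1}\bigr)$, and then translate this into carry counts via Theorem \ref{thm:kummer}. But your write-up stops exactly where the work begins. The decisive steps in the paper's argument are two short carry computations that you never perform: first, if $n$ is even then connectivity forces $r$ odd, so the last binary digit of $n-r$ is $1$ while that of $r-1$ is $0$; replacing $n-r$ by $n-r-1$ only flips that last digit to $0$ and therefore leaves the number of carries in the addition with $r-1$ unchanged, giving $\alpha_2\bigl(\tbinom{n-2}{r-1}\bigr) = \alpha_2\bigl(\tbinom{n-1}{r-1}\bigr)$ and contradicting the strict inequality; second, if $\alpha_2(n-r) \le \alpha_2(r-1)$, a digit comparison of $(n-r-1)_2$ with $(n-r)_2$ below the position $\alpha_2(r-1)$ shows the carry count cannot increase, again contradicting the strict inequality, whence $\alpha_2(n-r) > \alpha_2(r-1)$ and therefore $\alpha_2(n-1) = \alpha_2\bigl((n-r)+(r-1)\bigr) = \alpha_2(r-1)$. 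Your text replaces both of these with ``should collapse to'' and ``I expect the main obstacle to be the bookkeeping,'' but that bookkeeping is precisely the content of the lemma.

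There is also a concrete error en route: you assert that $g = 2^{k-1} \ge 2$ forces $\theta_0 - \theta_1 = 2\tbinom{n-1}{r-1}$ to be divisible by $4$ and hence $\tbinom{n-1}{r-1}$ to be even. For $k=2$ (balanced revival at time $\pi/4$) condition (i) says $\tbinom{n-1}{r-1} \equiv \pm 1 \pmod 4$, i.e.\ it is odd, and $g=2$ only gives $2 \mid 2\tbinom{n-1}{r-1}$, which is vacuous. Likewise the dichotomy you sketch at the end (``$b>a$ forces $k$ too large to satisfy $k \le r$'') is not the mechanism that rules out $\alpha_2(n-1) \neq \alpha_2(r-1)$: the single inequality $\alpha_2\bigl(\tbinom{n-2}{r-1}\bigr) > \alpha_2\bigl(\tbinom{n-1}{r-1}\bigr)$ already does all the work in both halves of the lemma, with no appeal to the bound $k \le r$. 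So the approach is the right one --- indeed the paper's --- but the proposal is an outline with the essential argument missing, not a proof.
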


\begin{proof}
If $X_r$ has balanced fractional revival, then by Proposition~\ref{prop:fr_hamming}
\[
\alpha_2(\binom{n-1}{r-1}) = k-2 \quad \text{and}\quad \alpha_2(\binom{n-2}{n-1}) \geq k-1, \quad \text{for some $k \leq r$.}
\]
First suppose $n$ is even. Since $X_r$ is connected, $r$ is odd. Thus the last digit of $(n-1-(r-1))_2$ is $1$, while the last digit of $(r-1)_2$ is $0$. Therefore, the number of carries when adding $(n-r)_2$ with $(r-1)_2$ is equal to the number of carries when adding $(n-r-1)_2$ with $(r-1)_2$. Thus,
\[\alpha_2(\binom{n-1}{r-1}) = \alpha_2(\binom{n-2}{r-1}),\]
and balanced fractional revival does not occur in $X_r$.

To see $\alpha_2(n-1) = \alpha_2(r-1)$, it suffices to show that $\alpha_2(n-1-(r-1))>\alpha_2(r-1)$. 
Suppose otherwise. Then there is an $m<\alpha_2(r-1)$ such that the $m$-th digit of $(n-2-(r-1))_2$ is $1$. It follows that the number of carries when adding $(n-r-1)_2$ with $(r-1)_2$ is less than or equal to the number of carries when adding $(n-r)_2$ with $(r-1)_2$, hence $\alpha_2(\binom{n-2}{r-1}) \leq \alpha_2(\binom{n-1}{r-1})$ and $X_r$ does not have balanced fractional revival.
\end{proof}

For balanced fractional revival times of $\pi/4$ and $\pi/8$, we
obtain tight characterizations.

\begin{proposition}
$X_r$ is a connected graph in $\HH(n,2)$ with balanced fractional revival at time $\pi/4$ if and only if the following hold.
\begin{enumerate}[(i)]
    \item $n$ is odd, and $n-1$ is not a power of 2.
    \item $(r-1)_2$ is obtained from $(n-1)_2$ by replacing some $1$'s with $0$'s, except at the $\alpha_2(n-1)$-th position.
 \end{enumerate}
\end{proposition}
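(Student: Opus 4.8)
The plan is to specialize Proposition~\ref{prop:fr_hamming} to a single distance graph (so $\ell = 1$, $r_1 = r$, and since $X_r\neq X_n$ we have $\ell'=1$) at the balanced‑revival time $\pi/4$, i.e.\ $k=2$, and then translate the resulting divisibility conditions on binomial coefficients into digit conditions via Kummer's theorem (Theorem~\ref{thm:kummer}). Throughout I will use the standard ``submask'' reformulation of Kummer: the base‑$2$ addition of $m$ and $N-m$ has no carries if and only if the $1$‑bits of $m$ form a subset of the $1$‑bits of $N$, equivalently $\binom{N}{m}$ is odd; and at least one carry occurs, equivalently $\binom{N}{m}$ is even, exactly when $(m)_2$ is not a submask of $(N)_2$.

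With $k=2$, conditions~(i) and~(ii) of Proposition~\ref{prop:fr_hamming} read
\[
\binom{n-1}{r-1}\equiv \pm 1 \pmod 4
\quad\text{and}\quad
\binom{n-2}{r-1}\equiv 0 \pmod 2,
\]
while condition~(iii) only fixes $\zeta$ and is always satisfiable. Since $\binom{n-1}{r-1}$ is an integer, the first condition says precisely that $\binom{n-1}{r-1}$ is odd, which by Kummer means that $(r-1)_2$ is a submask of $(n-1)_2$, i.e.\ is obtained from $(n-1)_2$ by changing some set of $1$'s to $0$'s. The second condition says $\binom{n-2}{r-1}$ is even, i.e.\ $(r-1)_2$ is \emph{not} a submask of $(n-2)_2$.

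Next I bring in that $X_r$ is connected, which for a distance graph in $\HH(n,2)$ is equivalent to $r$ being odd with $r\neq n$; in particular the last bit of $(r-1)_2$ is $0$. Now observe how $(n-2)_2 = (n-1)_2 - 1$ relates to $(n-1)_2$. If $n$ were even, $(n-1)_2$ and $(n-2)_2$ would differ only in bit $0$, which is $0$ in $r-1$ anyway, so $(r-1)_2$ would be a submask of one iff of the other --- impossible, since we need $\binom{n-1}{r-1}$ odd and $\binom{n-2}{r-1}$ even; hence $n$ is odd (this also follows from the Lemma preceding the Proposition). Put $t=\alpha_2(n-1)\ge 1$; then $(n-1)_2$ has $0$'s in positions $0,\dots,t-1$ and a $1$ in position $t$, while $(n-2)_2$ has $1$'s in positions $0,\dots,t-1$, a $0$ in position $t$, and agrees with $(n-1)_2$ above position $t$. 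Given that $(r-1)_2$ is a submask of $(n-1)_2$ it already has $0$'s in positions $0,\dots,t-1$, so the only way it can fail to be a submask of $(n-2)_2$ is to carry a $1$ in position $t$. Therefore the two binomial conditions together are equivalent to: $(r-1)_2$ is a submask of $(n-1)_2$ and has a $1$ in position $t=\alpha_2(n-1)$ --- which is exactly condition~(ii) of the statement. Finally, if $n-1$ were a power of $2$ this would force $(r-1)_2 = (n-1)_2$, i.e.\ $r=n$, contradicting connectivity; conversely, when $n-1$ is not a power of $2$ there is room for a proper submask fixing bit $t$, and any such choice yields an odd $r$ with $r<n$, so $X_r$ is connected. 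This is condition~(i), and assembling the two implications finishes the proof.

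The only real delicacy I anticipate is the bookkeeping at the boundary bit $t=\alpha_2(n-1)$: one must use the submask condition coming from $\binom{n-1}{r-1}$ to pin all bits of $r-1$ below position $t$ to zero, so that the sole remaining degree of freedom distinguishing $(n-1)_2$ from $(n-2)_2$ is precisely bit $t$. Everything else is a direct application of Proposition~\ref{prop:fr_hamming} and Kummer's theorem, plus the elementary fact that $X_r$ is connected in $\HH(n,2)$ exactly when $r$ is odd and $r\neq n$.
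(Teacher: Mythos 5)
Your proposal is correct and follows essentially the same route as the paper: specialize Proposition~\ref{prop:fr_hamming} to $k=2$ to get that $\binom{n-1}{r-1}$ is odd and $\binom{n-2}{r-1}$ is even, then translate these into digit (submask) conditions on $(r-1)_2$ relative to $(n-1)_2$ and $(n-2)_2$ via Kummer's theorem, using connectivity to force $r$ odd and $r<n$. Your treatment of the boundary bit $t=\alpha_2(n-1)$ and of the converse is just a more explicit version of the paper's argument.
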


\begin{proof}
By Proposition \ref{prop:fr_hamming}, $X_r$ has balanced fractional revival at time $\pi/4$ if and only if the following hold:
\begin{enumerate}[(a)]
    \item $\binom{n-1}{r-1}$ is odd, and
    \item $\binom{n-2}{r-1}$ is even.
\end{enumerate}
Suppose $X_r$ is a connected graph in $\HH(n,2)$ with fractional revival at time $\pi/4$. Then $r$ is odd and $r<n$. 

To see (i), assume for a contradiction that $n-1$ is a power of $2$. Then $(n-1)_2$ has exactly one digit of $1$, that is, the leading digit. By (a), for every $\ell$ such that the $\ell$-th digit of $(n-1)_2$ is $0$, the $\ell$-th digit of $(r-1)_2$ must also be $0$. Thus $r=1$, which contradicts the fact that $X_1$ does not have balanced fractional revival.

Next we prove (ii). Clearly, $\alpha_2(n-1)$ is the rightmost position at which $(n-1)_2$ is $1$. We have
\begin{align*}
(n-1)_2 &=1 \cdots 1 0 0 \cdots 0\\
(n-2)_2 &=1 \cdots 0 1 1 \cdots 1
\end{align*}
Thus, for both (a) and (b) to hold, $(r-1)_2$ must be $1$ at the $\alpha_2(n-1)$-th digit, and $0$ wherever $(n-1)_2$ is $0$.

The converse statement follows from reversing the argument.
\end{proof}

Recall that $X_1$ has no fractional revival and $X_2$ is not connected. In contrast, there are infinite many schemes where $X_3$ has fractional revival, as a corollary to the above result.
\begin{corollary} \label{cor:3mod4}
For $n \equiv 3\pmod{4}$,
$X_{3} \in \HH(n,2)$ has balanced fractional revival at $\pi/4$.
\qed
\end{corollary}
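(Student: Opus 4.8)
The plan is to obtain Corollary~\ref{cor:3mod4} as a direct specialization of Proposition~\ref{prop:fr_hamming} to the single distance graph $X_3$, i.e.\ to the case $\ell = 1$, $r_1 = 3$, with target time $\pi/2^k$ for $k = 2$. Nothing new is needed beyond unwinding that proposition's conditions and checking two elementary parity facts.

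Concretely, I would substitute $\ell=1$, $r_1 = 3$, $k = 2$ into the three conditions of Proposition~\ref{prop:fr_hamming}: Condition~(\ref{eqn:uno}) becomes $\binom{n-1}{2} \equiv \pm 1 \pmod 4$, i.e.\ $\binom{n-1}{2}$ is odd; Condition~(\ref{eqn:dos}) reduces to its single instance $j=1$, namely $\binom{n-2}{2} \equiv 0 \pmod 2$, i.e.\ $\binom{n-2}{2}$ is even; and Condition~(\ref{eqn:tres}) places no constraint on $n$, only fixing the phase $\zeta$. Writing $n-1 = 4j+2$ (which is exactly what $n \equiv 3 \pmod 4$ asserts), one has $\binom{n-1}{2} = (2j+1)(4j+1)$, a product of two odd integers and hence odd, while $\binom{n-2}{2} = 2j(4j+1)$ is even; so both conditions hold and the proposition yields $e^{\ii\zeta}(\cos\pi/4, \pm\ii\sin\pi/4)$-revival in $X_3$ at time $\pi/4$. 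As a sanity check this also matches the preceding digit characterization: $\alpha_2(n-1) = 1 = \alpha_2(r-1)$, the integer $n-1$ is not a power of $2$ once $n \ge 7$, and $(2)_2$ is obtained from $(n-1)_2$ by clearing every $1$ above the lowest set bit (which sits at position $\alpha_2(n-1) = 1$) while retaining that bit.

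I do not foresee a real obstacle; this is essentially a one-line corollary of machinery already developed. Two minor points deserve a word of care. First, one must keep straight which of the two off-by-one binomials is required to be odd ($\binom{n-1}{2}$) and which even ($\binom{n-2}{2}$). Second, the boundary value $n=3$ is formally outside the hypotheses of Proposition~\ref{prop:fr_hamming}, since there $X_3 = X_n = 2^{n-1}K_2$ is disconnected; but on each $K_2$-component $e^{-\ii tA} = \cos t\,I - \ii\sin t\,A$, which is balanced at $t = \pi/4$, so the corollary is in fact valid for every $n \equiv 3 \pmod 4$.
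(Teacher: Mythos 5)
Your proposal is correct and follows essentially the paper's own route: the paper derives the corollary from the immediately preceding $\pi/4$ characterization, whose content for $r=3$ is exactly your two checks that $\binom{n-1}{2}$ is odd and $\binom{n-2}{2}$ is even, both immediate from $n-1\equiv 2\pmod 4$. Your aside on the degenerate case $n=3$ (where $X_3=X_n$ falls outside the hypotheses but balanced revival still holds on each $K_2$) is a harmless refinement the paper leaves implicit.
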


\begin{lemma}
$X_r$ is a connected graph in $\HH(n,2)$ with balanced fractional revival at time $\pi/8$ if and only if the following hold.
\begin{enumerate}[(i)]
    \item $n$ is odd.
    \item $\alpha_2(n-1) = \alpha_2(r-1)$.
    \item $\binom{n-1}{r-1} \equiv 2 \pmod{4}$.
\end{enumerate}
\end{lemma}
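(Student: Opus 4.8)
The plan is to specialize Proposition~\ref{prop:fr_hamming} to the case of a single distance graph $X=X_r$ with $k=3$, and then translate the resulting congruences on binomial coefficients into the stated conditions (i)--(iii) using Kummer's theorem and the necessary condition from the previous lemma. With $\ell=1$, $r_1=r$, and (since $X_r$ is connected and not $X_n$ by the previous lemma, as $X_r=X_n$ would force $n=r$) $\ell'=1$, Proposition~\ref{prop:fr_hamming} at time $\pi/2^3$ says that balanced fractional revival occurs if and only if
\[
\binom{n-1}{r-1} \equiv \pm 2 \pmod{8}, \qquad \binom{n-2}{r-1} \equiv 0 \pmod{4}, \qquad \binom{n-3}{r-1} \equiv 0 \pmod{2},
\]
together with the phase condition (which only pins down $\zeta$ and imposes no constraint on $n,r$). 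The first displayed congruence is exactly $\binom{n-1}{r-1}\equiv 2\pmod 4$ together with $\binom{n-1}{r-1}\not\equiv 0 \pmod 8$ — but note $\binom{n-1}{r-1}\equiv 2\pmod 4$ already forces $\alpha_2\binom{n-1}{r-1}=1$, hence automatically $\binom{n-1}{r-1}\not\equiv 0\pmod 8$; so condition (iii) of the lemma, $\binom{n-1}{r-1}\equiv 2\pmod 4$, is equivalent to the first congruence above.

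Next I would show that, given $n$ odd (condition (i)) and $\alpha_2(n-1)=\alpha_2(r-1)$ (condition (ii)), the remaining two congruences $\binom{n-2}{r-1}\equiv 0\pmod 4$ and $\binom{n-3}{r-1}\equiv 0\pmod 2$ follow automatically, so that (i)--(iii) are indeed sufficient; and conversely that balanced fractional revival at $\pi/8$ forces (i) and (ii). The forward direction of the latter is immediate: by the previous lemma, balanced fractional revival in $X_r$ already implies $n$ odd and $\alpha_2(n-1)=\alpha_2(r-1)$. For the redundancy of the $\binom{n-2}{r-1}$ and $\binom{n-3}{r-1}$ congruences, I would use Kummer's theorem to count carries. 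Write $\alpha_2(n-1)=\alpha_2(r-1)=c$. Then $(n-1)_2$ and $(r-1)_2$ both end in $c$ zeros followed by a $1$ in position $c$; subtracting, $(n-r)_2 = (n-1)_2-(r-1)_2$ has its lowest nonzero bit in position $>c$ (here I use $\alpha_2(n-1-(r-1))>c$, which the previous lemma's proof establishes). Passing from $\binom{n-1}{r-1}$ to $\binom{n-2}{r-1}$ replaces the top argument $n-1$ by $n-2$, i.e. decrements the first argument by $1$, which (since $n$ is odd, $n-2$ is odd, so position $0$ of $(n-2)$ is $1$ while position $0$ of $(r-1)$ is $0$) changes the carry count by exactly the carry pattern in the low-order bits; a short Kummer-carry bookkeeping shows the number of carries increases by exactly $1$ when moving from the pair $(n-r,\,r-1)$ to $(n-1-r,\,r-1)$, and again by $1$ moving to $(n-2-r,\,r-1)$, giving $\alpha_2\binom{n-2}{r-1}=2$ and $\alpha_2\binom{n-3}{r-1}\ge 1$, as required.

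The main obstacle I expect is precisely this carry bookkeeping: carefully verifying that under (i)--(iii) the two auxiliary congruences hold, and in the converse direction that they cannot hold \emph{unless} $\alpha_2(n-1)=\alpha_2(r-1)$ — this last point is essentially the content of the previous lemma, so I would cite it rather than redo it, and the only genuinely new computation is showing that once (ii) holds, condition (iii) (equivalently $\alpha_2\binom{n-1}{r-1}=1$) is the \emph{sole} remaining obstruction, i.e. that $\alpha_2\binom{n-2}{r-1}\ge 2$ and $\alpha_2\binom{n-3}{r-1}\ge 1$ are forced. One clean way to organize this is to note that $\binom{n-2}{r-1}=\binom{n-1}{r-1}-\binom{n-2}{r-2}$ and $\binom{n-3}{r-1} = \binom{n-2}{r-1}-\binom{n-3}{r-2}$ (Pascal), and then to track $2$-adic valuations through these identities using Kummer on each term; since under (ii) the valuation of $\binom{n-1}{r-1}$ is exactly $1$, controlling $\binom{n-2}{r-2}$ suffices, and its valuation is readily computed from the binary digits of $n-2$ and $r-2$. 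Assembling these valuation computations with the equivalence of (iii) and the first congruence of Proposition~\ref{prop:fr_hamming} completes both directions.
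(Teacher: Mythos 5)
Your overall strategy---specialize Proposition~\ref{prop:fr_hamming} at $k=3$, import conditions (i) and (ii) from the preceding lemma, and reduce the two auxiliary congruences via Pascal-type identities---is the paper's strategy, and the ``clean way'' you mention at the end is essentially its proof. But the route you actually describe has concrete errors. First, you misquote the proposition: for $j=2$ the condition is $\binom{n-3}{r-2}\equiv 0\pmod 2$, not $\binom{n-3}{r-1}\equiv 0\pmod 2$. The two are interchangeable only because $\binom{n-3}{r-2}+\binom{n-3}{r-1}=\binom{n-2}{r-1}$ and the $j=1$ condition makes the right-hand side divisible by $4$; this reduction is a step of the paper's proof and cannot be elided. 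Second, the carry bookkeeping you propose is false: the number of carries does not ``increase by exactly $1$'' at each step. For $n=19$, $r=11$, which satisfies (i)--(iii), one has $\alpha_2\bigl(\binom{18}{10}\bigr)=1$ but $\alpha_2\bigl(\binom{17}{10}\bigr)=3$; and in general $\alpha_2\bigl(\binom{n-3}{r-1}\bigr)=\alpha_2\bigl(\binom{n-2}{r-1}\bigr)$ (the paper deduces this from $n-r-1$ being odd), not one more. Only the inequalities $\ge 2$ and $\ge 1$ are true, and direct carry counting as you describe it does not deliver them.

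Third, your claim that (i) and (ii) alone force the auxiliary congruences is wrong: for $n=7$, $r=3$ both hold, yet $\binom{5}{2}=10\not\equiv 0\pmod 4$. Condition (iii) is indispensable here, and your later sentence ``since under (ii) the valuation of $\binom{n-1}{r-1}$ is exactly $1$'' conflates (ii) with (iii). The correct chain, which the paper carries out, is: the identity $\binom{n-1}{r-1}=\tfrac{n-1}{r-1}\binom{n-2}{r-2}$ together with (ii) gives $\alpha_2\bigl(\binom{n-2}{r-2}\bigr)=\alpha_2\bigl(\binom{n-1}{r-1}\bigr)$, so (iii) makes both valuations exactly $1$; then $\binom{n-2}{r-1}=\binom{n-1}{r-1}-\binom{n-2}{r-2}$ has valuation at least $2$, and $\alpha_2\bigl(\binom{n-3}{r-1}\bigr)=\alpha_2\bigl(\binom{n-2}{r-1}\bigr)\ge 2$ disposes of the $j=2$ condition. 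With these repairs your sketch collapses onto the paper's argument; as written, your primary route would not go through.
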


\begin{proof}
By Proposition \ref{prop:fr_hamming}, $X_r$ has balanced fractional revival at time $\pi/8$ if and only if 
\begin{enumerate}[(a)]
\item $\alpha_2(\binom{n-1}{r-1})=1$,
\item $\alpha_2(\binom{n-2}{r-1})\ge 2$,
\item $\alpha_2(\binom{n-3}{r-2})\ge 1$.
\end{enumerate}
Since $\binom{n-3}{r-2}+\binom{n-3}{r-1}=\binom{n-2}{r-1}$, (b) and (c) hold if and only if (b) and $\alpha_2(\binom{n-3}{r-1})\ge 1$. On the other hand, since $n$ and $r$ are odd, $n-r-1$ is odd, so
\[\alpha_2(\binom{n-2}{r-1})=\alpha_2(\binom{n-3}{r-1}).\]
Therefore, (a), (b) and (c) hold if and only if (a) and (b) hold. 

Finally, since
\[\binom{n-2}{r-1}+\binom{n-2}{r-2}=\binom{n-1}{r-1},\]
(a) and (b) hold if and only if (a) holds and 
\[\alpha_2(\binom{n-2}{r-2})=1.\]
Given that $\alpha_2(n-1)=\alpha_2(r-1)$, (a) holds if and only if 
\[\alpha_2(\binom{n-2}{r-2})=1.\]
Therefore (i), (ii) and (iii) are necessary and sufficient conditions for $X_r$ to be connected and have balanced fractional revival at time $\pi/8$.
\end{proof}

\begin{proposition}
$X_r$ is a connected graph in $\HH(n,2)$ with balanced fractional revival at time $\pi/8$ if and only if the following hold.
\begin{enumerate}[(i)]
    \item $n$ is odd, and $n-1\ne 2^a(2^b-1)$ for any non-negative integers $a$ and $b$.
    \item $(r-1)_2$ is obtained from $(n-1)_2$ by \begin{enumerate}
        \item keeping the $j$-th digit, for $j=0,1,\cdots,\alpha_2(n-1)$;
        \item replacing exactly one substring ``$10$" with ``$01$";
        \item replacing some ``$1$"s with ``$0$"s.
        \end{enumerate}
\end{enumerate}
\end{proposition}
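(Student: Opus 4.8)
The plan is to start from the preceding Lemma, which already reduces the statement ``$X_r$ is a connected graph in $\HH(n,2)$ with balanced fractional revival at time $\pi/8$'' to the three numerical conditions: $n$ is odd, $\alpha_2(n-1)=\alpha_2(r-1)$, and $\binom{n-1}{r-1}\equiv 2\pmod 4$. So the entire remaining task is to translate these into statements about the binary strings $(n-1)_2$ and $(r-1)_2$. Write $a=\alpha_2(n-1)$; since $n$ is odd we have $a\ge 1$, the digits of $(n-1)_2$ in positions $0,\dots,a-1$ are $0$ and its $a$-th digit is $1$, and the requirement $\alpha_2(r-1)=a$ says exactly the same for $(r-1)_2$.

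The heart of the argument is a carry analysis via Kummer's theorem (Theorem~\ref{thm:kummer}). Since $\binom{n-1}{r-1}\equiv 2\pmod 4$ is equivalent to $\alpha_2\binom{n-1}{r-1}=1$, Kummer says the base-$2$ addition of $r-1$ and $(n-1)-(r-1)$ has exactly one carry. I would show that a single-carry addition whose sum is $n-1$ is forced into the following rigid shape: there is a position $p$ where both summands have digit $1$ (so $(n-1)_2$ has digit $0$ there), the carry is absorbed at position $p+1$ where both summands have digit $0$ (so $(n-1)_2$ has digit $1$ there), and at every other position the digit of $r-1$ is at most the digit of $n-1$. The one subtle point is that a lone carry can be absorbed only at the immediately following position — were $p+1$ also to generate a carry, there would be two of them — and this is precisely what pins down the ``$10$-to-$01$'' pattern. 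In the language of binary strings this says exactly that $(r-1)_2$ is obtained from $(n-1)_2$ by replacing one occurrence of the substring ``$10$'' (at positions $(p+1,p)$) by ``$01$'' and then changing some further $1$'s to $0$'s, this correspondence being a bijection. Imposing $\alpha_2(r-1)=a$ then forces the ``$10$'' to sit strictly above position $a$ (otherwise the replacement drives the lowest $1$ of $r-1$ below position $a$, or the replacement is impossible because $(n-1)_2$ has a $1$ at position $a$) and forbids zeroing the digit at position $a$; equivalently, digits $0,\dots,a$ of $(r-1)_2$ agree with those of $(n-1)_2$. This is exactly item (ii) of the Proposition.

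For item (i), the clause ``$n$ odd'' is already among the reduced conditions (and also follows from $\alpha_2(r-1)=\alpha_2(n-1)$ together with connectedness of $X_r$, which forces $r$ odd). For the clause $n-1\ne 2^a(2^b-1)$, I would observe that the construction in item (ii) produces a valid $r$ if and only if $(n-1)_2$ contains a ``$10$'' at positions $(p+1,p)$ with $p\ge a+1$. Reading $(n-1)_2$ from its leading digit down, this fails precisely when every digit from the leading one down to position $a$ equals $1$, i.e. $(n-1)_2$ is a block of $1$'s followed by a block of $0$'s, which is $n-1=2^a(2^b-1)$. It then remains to verify the converse direction: given any $r-1$ produced as in item (ii), one reads off $\alpha_2(r-1)=a$ from its lowest digits, sees that $r$ is odd and $1\le r\le n-1$ (the net change from $n-1$ to $r-1$ is strictly negative, so $X_r$ is connected), and running the carry analysis of the previous paragraph in reverse shows the addition of $r-1$ and $(n-1)-(r-1)$ has exactly one carry, i.e. $\binom{n-1}{r-1}\equiv 2\pmod 4$; hence all three conditions of the Lemma hold.

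I expect the main obstacle to be the carry analysis in the second paragraph: making sure that ``exactly one carry'' genuinely forces the rigid ``$10\to01$ plus some $1\to0$'' normal form — in particular the claim that the single carry is absorbed at the very next digit and nowhere else — and then handling the boundary interaction between this normal form and the position $a=\alpha_2(n-1)$ without off-by-one errors.
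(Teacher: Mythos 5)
Your proposal is correct and follows essentially the same route as the paper: invoke the preceding lemma to reduce to the three numerical conditions, then use Kummer's theorem to translate $\binom{n-1}{r-1}\equiv 2\pmod 4$ into a single-carry addition, whose rigid shape (carry generated at position $p$, absorbed at $p+1$, digitwise domination elsewhere) gives exactly the ``$10\to 01$ plus some $1\to 0$'' normal form, with the constraint $\alpha_2(r-1)=\alpha_2(n-1)$ pushing the swap above position $\alpha_2(n-1)$ and ruling out $n-1=2^a(2^b-1)$. Your write-up is in fact more explicit than the paper's (which compresses the whole translation into ``applying the above argument yields (i) and (ii)''), and the carry analysis you flag as the main obstacle is handled correctly.
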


\begin{proof}
First recall that for positive integers $a>b$, 
\[\binom{a}{b}\equiv 2 \pmod{4}\]
if and only if there is exactly one carry, say to the $\ell$-th position, when adding $(a-b)_2$ and $(b)_2$. Note that this happens if and only if 
\begin{itemize}
\item the $(\ell-1)$-th digit of $(a)_2$ is $0$, the $(\ell-1)$-th digit of $(b)_2$ is $1$;
\item the $\ell$-th digit of $(a)_2$ is 1, the $\ell$-th digit of $(b)_2$ is $0$;
\item $(a)_2$ is larger than $(b)_2$ at all other digits.
\end{itemize}
If in addition, $\alpha_2(a)=\alpha_2(b)$, then $\ell-1>\alpha_2(a)$, so $(a)_2$ cannot be a string of $1$'s followed by a string of $0$'s. Applying the above argument to $a=n-1$ and $b=r-1$ yields (i) and (ii).
\end{proof}

\begin{corollary} \label{cor:11mod16}
For $n \equiv 11\pmod{16}$,
$X_{7} \in \HH(n,2)$ has balanced fractional revival at $\pi/8$.
\qed
\end{corollary}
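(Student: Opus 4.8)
The plan is to obtain the corollary as an immediate special case of the preceding Proposition, which characterizes exactly the connected distance graphs $X_r$ in $\HH(n,2)$ having balanced fractional revival at time $\pi/8$; one only needs to check its two conditions for $r=7$ and any $n$ with $n\equiv 11\pmod{16}$.

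First I would record the relevant binary data. From $n\equiv 11\pmod{16}$ it follows that $n$ is odd and that the four lowest digits of $(n-1)_2$ are $1010$, so $\alpha_2(n-1)=1$; also $(r-1)_2=(6)_2=110$. Condition~(i) then holds: $n$ is odd, and if $n-1=2^a(2^b-1)$ for some non-negative integers $a,b$, then $(n-1)_2$ would be a block of $1$'s followed by a block of $0$'s, which together with $\alpha_2(n-1)=1$ forces $a=1$ and hence digit $2$ of $(n-1)_2$ equal to $1$ (when $b\ge 2$) or $n-1=2$ (when $b=1$), both contradicting the digit pattern $\ldots 1010$. For condition~(ii), I would exhibit the prescribed sequence of moves taking $(n-1)_2$ to $(r-1)_2$: keep digits $0$ and $1$ (they are $0$ and $1$, matching $(r-1)_2$); replace the single substring ``$10$'' occupying digits $3$ and $2$ by ``$01$'', which makes the lowest five digits read $\ldots 0110$; then replace every remaining $1$ --- all of which lie in positions $\ge 4$ --- by $0$, leaving $(0\cdots0110)_2=6=r-1$. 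Hence both conditions of the Proposition are met, so $X_7\in\HH(n,2)$ is connected and admits balanced fractional revival at $\pi/8$.

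There is no substantive obstacle here beyond the binary-digit bookkeeping; the single point that needs a small check is that the substring ``$10$'' chosen in move~(b) sits strictly above position $\alpha_2(n-1)=1$, as the Proposition requires (it occupies positions $3$ and $2$). For a slightly more computational variant, one could instead verify the equivalent conditions of the preceding Lemma --- namely $n$ odd, $\alpha_2(n-1)=\alpha_2(r-1)=1$, and $\binom{n-1}{6}\equiv 2\pmod 4$ --- where the last follows from Kummer's theorem (Theorem~\ref{thm:kummer}) because adding $(n-7)_2$ and $(6)_2$ produces exactly one carry whenever $n-1\equiv 10\pmod{16}$.
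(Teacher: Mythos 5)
Your proof is correct and follows exactly the route the paper intends: the corollary carries only a \qed because it is meant to be an immediate instance of the preceding proposition, and your verification of conditions (i) and (ii) for $(n-1)_2 = \cdots 1010$ and $(r-1)_2 = 110$ (together with the equivalent check $\binom{n-1}{6}\equiv 2 \pmod 4$ via Kummer) is precisely the omitted bookkeeping.
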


In what follows, we show that for every integer $k \ge 2$, there are families of distance graphs which exhibit fractional revival at time $\pi/2^k$ in some binary Hamming scheme.
This should not be interpreted to mean that fractional revival could be
implemented increasingly faster by growing $k$; physically the transport
time is also proportional to the inverse of the scaling factor of the
adjacency matrix (the Hamiltonian) that must decrease with $n$ to keep
the energy bounded (see the remark in \cite{cvz17}).
However, in order to focus on the combinatorial nature
of our constructions, we opt to work with unnormalized matrices.

\begin{proposition}
For $k \ge 4$, let $n = (2^{k-1}+1)2^{k+2}+3$ and $r = 2^{k+3}+3$.
Then $X_{r} \in \HH(n,2)$ has balanced fractional revival at time $\pi/2^{k}$.
\end{proposition}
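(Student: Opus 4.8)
The plan is to invoke Proposition~\ref{prop:fr_hamming} in the single-graph case $\ell=1$, $r_1=r$, with target time $\pi/2^{k}$; note that $X_r\neq X_n$ since $2^{k+3}<2^{2k+1}+2^{k+2}$ for $k\ge 2$, and that $k\le r=2^{k+3}+3$ holds trivially. Condition~(\ref{eqn:tres}) of that proposition only pins down the admissible phase $\zeta$ and carries no arithmetic content, so the whole statement reduces to checking condition~(\ref{eqn:uno}), namely $\binom{n-1}{r-1}\equiv\pm 2^{k-2}\pmod{2^{k}}$, i.e.\ $\alpha_2\!\bigl(\binom{n-1}{r-1}\bigr)=k-2$, together with condition~(\ref{eqn:dos}), namely $\alpha_2\!\bigl(\binom{n-1-j}{r-j}\bigr)\ge k-j$ for $j=1,\ldots,k-1$. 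By Kummer's theorem (Theorem~\ref{thm:kummer}), $\alpha_2\!\bigl(\binom{a}{b}\bigr)$ is the number of carries in the base-$2$ addition of $b$ and $a-b$, so each requirement becomes a carry count and the argument is pure bookkeeping on binary digits.

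First I would record the binary layouts. Since $n=2^{2k+1}+2^{k+2}+3$, we have $n-1=2^{2k+1}+2^{k+2}+2$ with $1$-bits at $\{1,\,k+2,\,2k+1\}$; $r-1=2^{k+3}+2$ with $1$-bits at $\{1,\,k+3\}$; $n-r=2^{k+2}(2^{k-1}-1)$ with $1$-bits at $\{k+2,k+3,\ldots,2k\}$; and $n-1-r=2^{2k+1}-2^{k+2}-1$, whose binary expansion is all ones on $\{0,1,\ldots,2k\}$ except for a single $0$ at position $k+2$. For $k\ge 4$ these descriptions are non-degenerate as stated, and the decisive feature is the long solid block of consecutive $1$-bits in $n-1-r$.

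For condition~(\ref{eqn:uno}) I would add $(r-1)_2$, with bits $\{1,k+3\}$, to $(n-r)_2$, with bits $\{k+2,\ldots,2k\}$: nothing carries below position $k+3$, but at position $k+3$ both summands hold a $1$, starting a carry that runs through positions $k+3,\ldots,2k$ (each a $1$ of $(n-r)_2$) and terminates at $2k+1$, for a total of $(2k+1)-(k+3)+1=k-2$ carries; hence $\alpha_2\!\bigl(\binom{n-1}{r-1}\bigr)=k-2$ exactly. For condition~(\ref{eqn:dos}), I would add $(r-j)_2$ to $(n-1-r)_2$ and split on $j$. For $1\le j\le 3$, $r-j=2^{k+3}+(3-j)$ contributes a low $1$-bit in $\{0,1\}$ when $j\ne 3$; pushing it into the block $\{0,\ldots,k+1\}$ of $n-1-r$ spawns a carry chain of length $\ge k$, more than enough, while for $j=3$ the lone bit at $k+3$ reproduces the $k-2\ge k-3$ carries of the previous computation. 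For $4\le j\le k-1$, one writes $r-j=2^{k+3}-(j-3)$ with $1\le j-3\le k-4<2^{k+1}$, so $(r-j)_2$ has top bit $k+2$ and, in particular, $1$'s at positions $k+1$ and $k+2$; as $n-1-r$ has a $1$ at $k+1$, a $0$ at $k+2$, and $1$'s on $\{k+3,\ldots,2k\}$, a carry is forced out of position $k+1$ and then propagates through $2k$, giving at least $k>k-j$ carries. Thus both conditions hold and Proposition~\ref{prop:fr_hamming} delivers the claim.

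The main obstacle I anticipate is the case analysis for condition~(\ref{eqn:dos}): one must describe $(r-j)_2$ carefully as $j$ ranges over $1,\ldots,k-1$, since the low part ``$3-j$'' turns negative at $j=4$ and a borrow reshapes the low bits of $r-j$, and then argue in each regime that the solid $1$-block of $n-1-r$ absorbs enough of $(r-j)_2$ to manufacture the required number of carries. By contrast, condition~(\ref{eqn:uno}) is delicate in a different sense --- the carry count must be \emph{exactly} $k-2$, as both a lower and an upper bound --- but the sparsity of $(r-1)_2$ and $(n-r)_2$ makes the upper bound immediate.
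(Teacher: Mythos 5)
Your proposal is correct and follows essentially the same route as the paper: both reduce the claim to conditions (i) and (ii) of Proposition~\ref{prop:fr_hamming} and verify them by counting binary carries via Kummer's theorem, using the same binary layouts of $n-1$, $r-1$, $n-r$ and $n-r-1$ and the same case split $j\le 3$ versus $4\le j\le k-1$. One cosmetic slip: in your count for condition (i) the written expression $(2k+1)-(k+3)+1$ equals $k-1$, not $k-2$; the carries occur out of positions $k+3,\ldots,2k$ only, so the correct tally is $2k-(k+3)+1=k-2$, which is the value you actually use.
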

\begin{proof}
We first show that Condition~(\ref{eqn:uno}) of Proposition~\ref{prop:fr_hamming} holds.
By Kummer's theorem, it suffices to show there are $k-2$ carries generated in the binary addition of $n-r$ and $r-1$.
We write the binary representations of the numbers:
\begin{eqnarray}
(n-1)_{2} & = & 10^{k-3} 0 \cdot 10^{k}10 \\
(r-1)_{2} & = & 00^{k-3} 1 \cdot 00^{k}10 \\
(n-r)_{2} & = & 01^{k-3} 1 \cdot 10^{k}00
\end{eqnarray}
Note there are exactly $k-2$ carries in the binary addition of $n-r$ and $r-1$, and
\[
\binom{n-1}{r-1}\equiv \pm 2^{k-2} \pmod{2^k}.
\]

To show that Condition~(\ref{eqn:dos}) of Proposition~\ref{prop:fr_hamming} holds, it suffices to show that there are at least $k-j$ carries generated in the binary addition of $n-r-1$ and $r-j$, for $j=1,\ldots,k-1$.
The binary representations of $n-r-1$ and of $r-1$, $r-2$, and $r-3$ are
\begin{eqnarray}
(n-r-1)_{2} & = & 01^{k-3} 1 \cdot 01^{k}11 \\
(r-1)_{2} & = & 00^{k-3} 1 \cdot 00^{k}10 \\
(r-2)_{2} & = & 00^{k-3} 1 \cdot 00^{k}01 \\
(r-3)_{2} & = & 00^{k-3} 1 \cdot 00^{k}00 
\end{eqnarray}
Note that at least $k-j$ carries are generated in the binary addition of $n-r-1$ and $r-j$, where
$j=1,2,3$.
Now, we consider the binary representations of $n-r-1$ and of $r-j$, for $j =4,\ldots,k-1$:
\begin{eqnarray}
(n-r-1)_{2} & = & 01^{k-3} 1 \cdot 01^{k}11 \\
(r-4)_{2} & = & 00^{k-3} 0 \cdot 11^{k}11 
\end{eqnarray}
Note there are at least $k+2$ carries in the binary addition of $n-r-1$ and $r-4$.
Moreover, the number of ones in the binary representation of $r-j$ decreases by at most
one as $j$ increases from $4$ to $k-1$.
Therefore, there are at least $k+2-(j-4) \ge k-j$ carries generated in the binary addition of
$n-r-1$ and $r-j$, for $j=1,\ldots,k-1$.

So, by Proposition \ref{prop:fr_hamming}, $X_{r}$ has fractional revival at time $\pi/2^{k}$.
\end{proof}


\subsection{Consecutive Unions}

Now we proceed to construct consecutive unions of distance graphs that admit balanced fractional revival at time $\pi/4$.

\begin{proposition}
For every $r\ge 2$, let $\ell$ be the positive integer such that $2^{\ell-1} <r \le 2^{\ell}$. Then for $n\equiv r+1 \pmod{2^{\ell}}$, the consecutive union $X_1 \cup X_2 \cup \cdots \cup X_r$ has balanced fractional revival at time $\pi/4$.
\end{proposition}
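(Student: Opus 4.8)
The plan is to apply Proposition~\ref{prop:fr_hamming} to the consecutive union $X=X_1\cup\cdots\cup X_r$, regarded as a union of $r$ distance graphs with $r_i=i$ for $i=1,\ldots,r$, and with $k=2$ (so that $\pi/2^k=\pi/4$). First I would record that its hypotheses hold: $X\ne X_n$, since $X$ properly contains $X_1$ as $r\ge 2$, and $n>r$ (we need $n\ge r$ for $X_r$ to lie in $\HH(n,2)$, and $n=r$ is impossible because $r+1\not\equiv r\pmod{2^\ell}$). Specializing conditions (\ref{eqn:uno}) and (\ref{eqn:dos}) to $k=2$ — and noting that (\ref{eqn:tres}) only pins down the phase $\zeta$ and imposes nothing else — reduces the whole statement to showing
\[
S_1:=\sum_{i=0}^{r-1}\binom{n-1}{i}\ \text{is odd}
\qquad\text{and}\qquad
S_2:=\sum_{i=0}^{r-1}\binom{n-2}{i}\ \text{is even}.
\]

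The heart of the matter is a parity reduction modulo $2^\ell$. By Lucas' congruence (equivalently, by Kummer's theorem, Theorem~\ref{thm:kummer}, with $p=2$), $\binom{m}{i}$ is odd precisely when the base-$2$ digits of $i$ form a submask of those of $m$; hence for $0\le i<2^\ell$ the parity of $\binom{m}{i}$ depends only on the lowest $\ell$ binary digits of $m$, i.e.\ on $m\bmod 2^\ell$. Since $r\le 2^\ell$, every index occurring in $S_1$ and $S_2$ satisfies $0\le i\le r-1<2^\ell$, so the parities of $S_1$ and $S_2$ depend only on $(n-1)\bmod 2^\ell$ and $(n-2)\bmod 2^\ell$. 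Now I would invoke the hypothesis $n\equiv r+1\pmod{2^\ell}$, which gives $n-1\equiv r$ and $n-2\equiv r-1\pmod{2^\ell}$, so that $\binom{n-1}{i}\equiv\binom{r}{i}$ and $\binom{n-2}{i}\equiv\binom{r-1}{i}$ modulo $2$ for all $0\le i\le r-1$ — and this stays valid in the boundary case $r=2^\ell$, where $\binom{2^\ell}{i}\equiv\binom{0}{i}\pmod 2$ for $i<2^\ell$. Therefore
\[
S_1\equiv\sum_{i=0}^{r-1}\binom{r}{i}=2^r-1\equiv 1\pmod 2,
\qquad
S_2\equiv\sum_{i=0}^{r-1}\binom{r-1}{i}=2^{r-1}\equiv 0\pmod 2,
\]
both congruences using $r\ge 2$. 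This establishes (\ref{eqn:uno}) and (\ref{eqn:dos}), so Proposition~\ref{prop:fr_hamming} gives balanced fractional revival at time $\pi/4$, with phase read off from (\ref{eqn:tres}).

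I do not anticipate a genuine obstacle: once the parity-reduction observation is in hand, the argument collapses to the two elementary identities $\sum_{i<r}\binom{r}{i}=2^r-1$ and $\sum_{i<r}\binom{r-1}{i}=2^{r-1}$. The only points to watch are the notational clash between the two roles of ``$\ell$'' — the number of summands (which is $r$ here) when invoking Proposition~\ref{prop:fr_hamming}, versus the exponent defined by $2^{\ell-1}<r\le 2^\ell$ — and checking that the reduction modulo $2^\ell$ behaves correctly in the endpoint case $r=2^\ell$; both are handled as indicated above.
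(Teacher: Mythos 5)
Your proof is correct and follows essentially the same route as the paper: both reduce via Proposition~\ref{prop:fr_hamming} with $k=2$ to showing $\sum_{i=0}^{r-1}\binom{n-1}{i}$ is odd and $\sum_{i=0}^{r-1}\binom{n-2}{i}$ is even, both use Kummer's theorem to observe that the parity of $\binom{m}{i}$ for $i<2^{\ell}$ depends only on $m \bmod 2^{\ell}$, and both then evaluate the reduced sums as $2^{r}-1$ and $2^{r-1}$. Your explicit check of the endpoint case $r=2^{\ell}$ is a small extra care that the paper leaves implicit.
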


\begin{proof}
By Proposition \ref{prop:fr_hamming}, it suffices to prove the following:
\begin{enumerate}[(i)]
    \item $\sum_{j=1}^{r} \binom{n-1}{j-1}$ is odd, and
    \item $\sum_{j=1}^{r} \binom{n-2}{j-1}$ is even.
\end{enumerate}

Let $a$ and $b$ be two positive integers with $a\ge b$. From Theorem \ref{thm:kummer}, we see that $\binom{a}{b}$ is even if and only if there is at least one $i$ such that the $i$-th digit of $(a)_2$ is $0$ and the $i$-th digit of $(b)_2$is $1$. Thus, if $c$ is an integer with $2^c \ge a$, then for any positive integer $d$ we have that $\binom{a+2^c d}{b}$ is even if and only if $\binom{a}{b}$ is even. 

Now, since $n-1 \equiv r \pmod{2^{\ell}}$, it follows that 
\[\sum_{j=1}^r\binom{n-1}{j-1} \equiv \sum_{j=1}^r\binom{r}{j-1}\equiv 1 \pmod{2},\]
and
\[\sum_{j=1}^r \binom{n-2}{j-1} \equiv \sum_{j=1}^r \binom{r-1}{j-1} \equiv 0\pmod{2}.\]
Hence conditions (i) and (ii) hold.
\end{proof}


\section{Weighted Graphs}

In this section, we turn our attention to balanced fractional revival in graphs (possibly weighted) which lie in the span of the Hamming graphs. We first give a characterization of balanced fractional revival that occurs at time $\pi/\Omega$.
\begin{proposition} \label{prop:gfr_tight}
For an integer $n \ge 2$,
let $X$ be a graph whose adjacency matrix $A$ is in the Bose-Mesner algebra of $\HH(n,2)$.
Suppose the eigenvalues of $A$ are $\theta_0, \theta_1,\ldots, \theta_n$. 
Then $X$ has $e^{\ii\zeta}(\cos\pi/4, \pm\ii\sin\pi/4)$-revival at time $\pi/\Omega$
if and only if 
for some integers $h,h_{0},\ldots,h_{n-2}$,
we have
\begin{eqnarray}
\theta_{0} - \theta_{1} & = & \left(h + \tfrac{1}{2}\right)\Omega \\
\theta_{s} - \theta_{s+2} & = & 2h_{s}\Omega, \hspace{0.5in} \mbox{ for $s=0,\ldots,n-2$} 
\end{eqnarray}
and $\zeta + \theta_{0}\pi/\Omega \equiv \mp \pi/4 \pmod{2\pi}$.
\end{proposition}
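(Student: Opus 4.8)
The plan is to specialise Theorem~\ref{thm:fr_in_scheme} to the Hamming scheme $\HH(n,2)$ and then compare the two spectral expansions of $U(\pi/\Omega)$. First I would observe that, among the classes $A_0,\ldots,A_n$ of $\HH(n,2)$, the only one that is a permutation matrix besides $A_0=I$ is $A_n$: each $A_r$ has constant row sum $\binom{n}{r}$, which equals $1$ only when $r\in\{0,n\}$, and $A_n$ (complementation of binary strings) is a fixed-point-free involution since $n\ge 2$. Hence by Theorem~\ref{thm:fr_in_scheme} any fractional revival in $X$ goes from a vertex $a$ to its complement $\bar a$, and balanced fractional revival of the stated form at time $\pi/\Omega$ is equivalent to the single matrix identity
\[
U(\pi/\Omega)=e^{\ii\zeta}\Bigl(\cos\tfrac{\pi}{4}\,I+\varepsilon\,\ii\sin\tfrac{\pi}{4}\,A_n\Bigr),
\]
where $\varepsilon=\pm 1$ is the sign occurring in $\beta=\pm\ii\sin(\pi/4)$. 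So the whole proposition reduces to describing, in terms of the $\theta_s$, exactly when this identity holds; note it is this direction of Theorem~\ref{thm:fr_in_scheme}, involving only the matrix identity $U=e^{\ii\zeta}(\alpha I+\beta A_n)$, that is used, so no integrality of the eigenvalues is needed and weighted $X$ are genuinely covered.

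Next I would expand both sides over the minimal idempotents. Since $A_n=\sum_s(-1)^sE_s$, the right-hand side equals $\sum_s\tfrac{1}{\sqrt2}\bigl(1+\varepsilon(-1)^s\ii\bigr)E_s=\sum_s e^{\varepsilon(-1)^s\ii\pi/4}E_s$, while $U(\pi/\Omega)=\sum_s e^{-\ii\theta_s\pi/\Omega}E_s$. As $E_0,\ldots,E_n$ are linearly independent, the identity is equivalent to the congruences $\theta_s\pi/\Omega+\zeta+\varepsilon(-1)^s\pi/4\equiv 0\pmod{2\pi}$ for $s=0,\ldots,n$. I would now split by the parity of $s$ and subtract the $s=0$ congruence from the others. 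The $s=0$ congruence alone gives $\zeta+\theta_0\pi/\Omega\equiv-\varepsilon\pi/4\pmod{2\pi}$, which is condition (iii), with $\mp\pi/4=-\varepsilon\pi/4$. Subtracting it from the congruence of an even $s$ gives $(\theta_0-\theta_s)/\Omega\in 2\ZZ$; subtracting it from the $s=1$ congruence gives $(\theta_0-\theta_1)/\Omega\equiv-\varepsilon/2\pmod 2$, i.e.\ $\theta_0-\theta_1=(h+\tfrac12)\Omega$ for an integer $h$ (condition (i)), with $(-1)^h=-\varepsilon$; and subtracting the $s=1$ congruence from that of an odd $s$ gives $(\theta_1-\theta_s)/\Omega\in 2\ZZ$.

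To finish I would check that the conditions ``$(\theta_0-\theta_s)/\Omega\in 2\ZZ$ for all even $s$'' and ``$(\theta_1-\theta_s)/\Omega\in 2\ZZ$ for all odd $s$'' are together equivalent to condition (ii), $\theta_s-\theta_{s+2}=2h_s\Omega$ for $s=0,\ldots,n-2$: one implication is immediate by subtraction, the other by telescoping along the even and along the odd index chains. Collecting these equivalences yields the proposition, with $\varepsilon$ and $\zeta$ as described. The step I expect to require the most care is the sign bookkeeping: once conditions (i) and (ii) hold, the sign $\varepsilon$ in $\beta=\varepsilon\,\ii\sin(\pi/4)$ is not free but is forced to equal $-(-1)^h$ by the parity of the integer $h$ of condition (i), while the sign in condition (iii) is the opposite one, $(-1)^h$. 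This correlation is exactly what the matched $\pm$ and $\mp$ in the statement encode, and I would make a point of stating it explicitly rather than leaving it implicit.
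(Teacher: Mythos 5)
Your argument is correct and follows essentially the same route as the paper's proof: expand $U(\pi/\Omega)$ and $e^{\ii\zeta}(\cos(\pi/4)A_0 \pm \ii\sin(\pi/4)A_n)$ over the minimal idempotents, equate coefficients, and extract conditions (i)--(iii) by dividing out the $s=0$ equation. Your added details --- justifying via Theorem~\ref{thm:fr_in_scheme} that $A_n$ is the only candidate permutation class, the telescoping equivalence for condition (ii), and the explicit correlation $\varepsilon=-(-1)^h$ between the signs in (i) and (iii) --- are all sound and merely make explicit what the paper leaves implicit.
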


\begin{proof}
Let $A = \sum_{s=0}^{n} \theta_{s}E_{s}$ where $E_{s}$ are the minimal idempotents of the scheme.
Note $A_{0} = \sum_{s} E_{s}$ and $A_{n} = \sum_{s} (-1)^{s}E_{s}$.
Suppose $e^{-\ii A\pi/\Omega} = e^{\ii\zeta}(\cos(\pi/4) A_{0} \pm \ii \sin(\pi/4) A_{n})$.
Then, for $s=0,\ldots,n$, we have
\begin{equation}
e^{-\ii\theta_{s}\pi/\Omega} 
	= e^{\ii\zeta} (\cos(\pi/4) \pm \ii\sin(\pi/4)(-1)^{s}).
\end{equation}
For $s=0$, we get $\zeta \equiv -\theta_{0}\pi/\Omega \mp \pi/4 \pmod{2\pi}$.
We have 
\[e^{-\ii(\theta_{0}-\theta_{1})\pi/\Omega}=\mp \ii\]
and
\[
e^{-\ii(\theta_{s}-\theta_{s+2})\pi/\Omega} = 1,
\quad \text{for $s=0,\ldots,n-2$.}
\]
So, there exist integers $h, h_{0},\ldots,h_{n-2}$ satisfying
\begin{eqnarray}
\theta_{0}-\theta_{1} & = & \left(h + \tfrac{1}{2}\right)\Omega \\
\theta_{s}-\theta_{s+2} & = & 2h_{s}\Omega, \qquad \text{for $s=0,\ldots,n-2$.}
\end{eqnarray}
This yields the claim.
\end{proof}

\subsection{Revisiting $\spn\{A_1,A_2\}$}

Fractional revival was first studied on weighted paths \cite{gvz16}. In \cite{cvz17} and \cite{bcltv18}, the authors constructed analytically examples with fractional revival in $\spn\{A_1, A_2\}$ of the Hamming scheme. We rediscover some of their results in this section.

The spectra of the Hamming graphs $X_{1}$ and $X_{2}$ are given by the Krawtchouk polynomials
$p_{1}(s) = n-2s$ and
$p_{2}(s) = \binom{n}{2} - 2s(n-1) + 4\binom{s}{2}$, respectively.
Using Proposition \ref{prop:gfr_tight}, we may classify the values $\omega_{1},\omega_{2}$ for which the graph
$\omega_{2} A_{2} + \omega_{1} A_{1}$ has fractional revival.
Recall that $A_{1}$ alone has no fractional revival \cite{cvz17}.
We shall consider two cases based on whether $\omega_{1}$ is zero or not.


First, we consider the case when $\omega_{1} \neq 0$. Here, we may consider instead
$\tilde{A} = \omega A_{2} + A_{1}$, where $\omega = \omega_{2}/\omega_{1}$. 
Let $\tilde{A} = \sum_{s=0}^{n} \theta_{s}E_{s}$ with
$\theta_{s} = \omega p_{2}(s) + p_{1}(s)$.
Note that
\begin{eqnarray} \label{eqn:spacing}
\theta_{0} - \theta_{1} & = & 2(\omega(n-1) + 1) \\
\theta_{s} - \theta_{s+2} & = & 4\big(\omega(n-2s-2) + 1\big).
\end{eqnarray}
By Proposition \ref{prop:gfr_tight}, for fractional revival to occur at time $\pi/\Omega$, 
it suffices to require
\begin{equation} \label{eqn:inductive}
2(\omega(n - 2s - 2) + 1) = \ZZ\Omega
\end{equation}
\begin{equation} \label{eqn:base}
2(\omega(n-1) + 1) = (\ZZ + \tfrac{1}{2})\Omega.
\end{equation}
Taking the difference of the last two equations, for each $s$, we have
\begin{equation} \label{eqn:constraint1}
2(2s+1)\omega/\Omega = \ZZ + \tfrac{1}{2}.
\end{equation}
This shows $\omega/\Omega$ is rational.
Moreover, $4\omega/\Omega$ must be an odd integer.

\begin{proposition}
In $\HH(n,2)$, $\omega A_2+A_{1}$ has balanced fractional revival at time $\pi/\Omega$ 
if and only if $4\omega/\Omega$ is an odd integer and $4/\Omega$ is an integer that has the same parity as $n$.
\begin{proof}
Let $4\omega/\Omega=2m+1$ for some integer $m$.
If $4/\Omega$ and $n$ have the same parity, then $4(\omega(n-1)+1)/\Omega = 2h+1$
for some integer $h$.
Then the integers $h$ and $h_s= h-2ms-s-m$, $s=0,\ldots,n-2$, satisfy the equations in
Proposition~\ref{prop:gfr_tight}.

The converse follows from Equations~(\ref{eqn:base}) and (\ref{eqn:constraint1}).
\end{proof}
\end{proposition}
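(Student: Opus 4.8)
The plan is to apply Proposition~\ref{prop:gfr_tight} to the matrix $A = \omega_2 A_2 + \omega_1 A_1$ with $\omega_1 \ne 0$, which we may rescale to $\tilde A = \omega A_2 + A_1$ with $\omega = \omega_2/\omega_1$, since scaling $A$ by $\omega_1$ merely rescales $\Omega$ by the same factor. The eigenvalue spacings are already recorded in Equations~(\ref{eqn:spacing}): $\theta_0 - \theta_1 = 2(\omega(n-1)+1)$ and $\theta_s - \theta_{s+2} = 4(\omega(n-2s-2)+1)$ for $s = 0,\ldots,n-2$. By Proposition~\ref{prop:gfr_tight}, balanced fractional revival at time $\pi/\Omega$ holds if and only if $\theta_0 - \theta_1 = (h + \tfrac12)\Omega$ for some integer $h$, and $\theta_s - \theta_{s+2} = 2h_s\Omega$ for integers $h_s$, $s = 0,\ldots,n-2$ (the phase condition on $\zeta$ is then automatically solvable and imposes no constraint on whether revival occurs).

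First I would extract the arithmetic core. Taking $s=0$ in the spacing condition gives $4(\omega(n-2)+1) = 2h_0\Omega$, and subtracting $\theta_0-\theta_1 = (h+\tfrac12)\Omega$ doubled appropriately — more cleanly, forming the combination in Equation~(\ref{eqn:constraint1}), $2(2s+1)\omega/\Omega = \mathbb{Z} + \tfrac12$ for each $s$ — shows that $4\omega/\Omega$ must be an odd integer; write $4\omega/\Omega = 2m+1$. This is the forward direction's first half. Next, substituting back into $\theta_s - \theta_{s+2} = 2h_s\Omega$: we need $4(\omega(n-2s-2)+1)/\Omega = (2m+1)(n-2s-2) + 4/\Omega$ to be an even integer for every $s$. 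Since $(2m+1)(n-2s-2)$ has the same parity as $n$ (as $n-2s-2 \equiv n \pmod 2$), this forces $4/\Omega$ to be an integer of the same parity as $n$. Then one checks the base equation $\theta_0-\theta_1 = (h+\tfrac12)\Omega$: $4(\omega(n-1)+1)/\Omega = (2m+1)(n-1) + 4/\Omega$, and with $4/\Omega$ and $n$ of equal parity, $(2m+1)(n-1) + 4/\Omega$ is odd, so it equals $2h+1$ for an integer $h$, confirming $\theta_0 - \theta_1 = (h+\tfrac12)\Omega$.

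For the converse I would simply verify that the stated conditions produce the integers required by Proposition~\ref{prop:gfr_tight}. Assume $4\omega/\Omega = 2m+1$ and $4/\Omega$ is an integer with the same parity as $n$. Then $4(\omega(n-1)+1)/\Omega = (2m+1)(n-1) + 4/\Omega$ is odd, say $2h+1$, giving $\theta_0 - \theta_1 = 2(\omega(n-1)+1) = (h+\tfrac12)\Omega$. For the spacing, $\theta_s - \theta_{s+2} = 4(\omega(n-2s-2)+1)$, so $(\theta_s-\theta_{s+2})/\Omega = (2m+1)(n-2s-2) + 4/\Omega$; expanding $(2m+1)(n-2s-2) = (2m+1)(n-1) - (2m+1)(2s+1)$ and using $2h+1 = (2m+1)(n-1)+4/\Omega$, this equals $2h+1 - (4/\Omega) - (2m+1)(2s+1) + (4/\Omega)$ — I would just compute directly that it equals $2h_s$ with $h_s = h - 2ms - s - m$, as claimed in the excerpt's proof. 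This is an even integer, so $\theta_s - \theta_{s+2} = 2h_s\Omega$, and Proposition~\ref{prop:gfr_tight} applies.

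The main obstacle is essentially bookkeeping: keeping the parity arguments straight, particularly tracking that $4\omega/\Omega$ and $4/\Omega$ are the two independent integrality constraints and that the parity of $4/\Omega$ is what couples to $n$. There is no deep difficulty — the hard work was already done in establishing Proposition~\ref{prop:gfr_tight} and in deriving Equations~(\ref{eqn:spacing})--(\ref{eqn:constraint1}) — so the proof is a short verification in both directions, with the only subtlety being to confirm that no additional constraint is hidden in the $h_s$ for intermediate $s$ (it is not, since once $4\omega/\Omega$ is an odd integer and $4/\Omega$ matches the parity of $n$, the quantity $(2m+1)(n-2s-2)+4/\Omega$ is automatically even for all $s$).
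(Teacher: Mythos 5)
Your proposal is correct and follows essentially the same route as the paper: both directions reduce to Proposition~\ref{prop:gfr_tight} via the spacings in Equations~(\ref{eqn:spacing})--(\ref{eqn:constraint1}), and your explicit computation recovers exactly the integers $h$ and $h_s = h - 2ms - s - m$ that the paper's proof exhibits. You merely write out the parity bookkeeping that the paper leaves implicit.
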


\begin{corollary}
Suppose $\omega A_2+A_1$, for some $\omega\neq 0$, has balanced fractional revival at time $\pi/\Omega$. Then $\omega\in \QQ$ and $\pi/\Omega = q\pi/4$, 
for some $q$ with the same parity as $n$.
\qed
\end{corollary}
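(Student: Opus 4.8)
The plan is to obtain the corollary as an immediate consequence of the preceding proposition together with the two labeled equations \eqref{eqn:base} and \eqref{eqn:constraint1}. Assume $\omega A_2 + A_1$ has balanced fractional revival at time $\pi/\Omega$ for some $\omega \neq 0$. First I would invoke the proposition directly: its hypothesis is exactly ``$4\omega/\Omega$ is an odd integer and $4/\Omega$ is an integer with the same parity as $n$.'' From $4/\Omega \in \ZZ$ we get $\Omega = 4/q$ for some integer $q$, hence $\pi/\Omega = q\pi/4$, and $q$ inherits the parity of $n$ (since $q = 4/\Omega$ is precisely the integer whose parity the proposition pins to that of $n$). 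This settles the claim that $\pi/\Omega = q\pi/4$ with $q$ of the same parity as $n$.

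For the rationality of $\omega$, I would combine the two facts $4\omega/\Omega = 2m+1$ (odd integer) and $4/\Omega = q$ (integer): dividing gives $\omega = (2m+1)/q \in \QQ$. Alternatively, and perhaps preferably since it does not lean on the full strength of the proposition, one can argue straight from \eqref{eqn:constraint1}: taking $s=0$ there gives $2\omega/\Omega = \ZZ + \tfrac12$, so $\omega/\Omega \in \QQ$; and then \eqref{eqn:base} reads $2(\omega(n-1)+1) = (\ZZ + \tfrac12)\Omega$, i.e. $\Omega \in \QQ$ once $\omega/\Omega$ is a fixed rational and the left side is determined — more carefully, $4\omega/\Omega$ odd and also (from the proposition) $4/\Omega$ an integer together force both $\omega$ and $\Omega$ rational. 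Either route is a one-line deduction.

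Honestly there is no real obstacle here: the content is entirely carried by the proposition and the displayed relations, and the corollary is just the act of reading off what ``$4/\Omega \in \ZZ$ with the parity of $n$'' says when rewritten as $\pi/\Omega = q\pi/4$. The only thing worth a moment's care is making sure the parity assertion is transported correctly — that is, confirming the integer called $q$ in the corollary is literally $4/\Omega$, the same integer the proposition constrains, rather than some rescaling of it; since $\pi/\Omega = q\pi/4 \iff q = 4/\Omega$, this is immediate. So the proof is: apply the proposition to extract $4\omega/\Omega = 2m+1$ and $4/\Omega = q \in \ZZ$ with $q \equiv n \pmod 2$; conclude $\omega = (2m+1)/q \in \QQ$ and $\pi/\Omega = q\pi/4$.
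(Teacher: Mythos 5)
Your proposal is correct and matches the paper's intended argument: the corollary is stated with no proof precisely because it is an immediate consequence of the preceding proposition, read exactly as you do ($q = 4/\Omega$ is the integer of the right parity, and $\omega = (4\omega/\Omega)/(4/\Omega)$ is a ratio of integers, hence rational). Your only point needing care, that the $q$ of the corollary is literally $4/\Omega$, is handled correctly, so nothing is missing.
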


The following corollaries provide natural examples of {\em signed} (multi-)graphs 
in the Hamming scheme which exhibit fractional revival. 

\begin{corollary}
For any integer $m$, $A_{2} \pm 2A_{1}$ in the Bose-Mesner algebra of $\HH(2m,2)$
has balanced fractional revival at time $\pi/4$.  
(Also, $\frac{1}{2} A_2 \pm A_1$ has balanced fractional revival at time $\pi/2$.)
\qed
\end{corollary}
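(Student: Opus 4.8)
The plan is to apply the preceding Proposition that characterizes balanced fractional revival in $\omega A_2 + A_1$ at time $\pi/\Omega$, using the stated criterion that $4\omega/\Omega$ is an odd integer and $4/\Omega$ is an integer of the same parity as $n$. For the first claim, I would set $\omega = 1$ (since $A_2 + 2A_1 = 2(\tfrac12 A_2 + A_1)$, and the case $A_2 - 2A_1 = -2(-\tfrac12 A_2 + A_1)$ reduces similarly after noting that negating the adjacency matrix only conjugates the quantum walk by complex conjugation and preserves balanced fractional revival); wait — more carefully, $A_2 \pm 2A_1$ corresponds to $\omega_2 = 1$, $\omega_1 = \pm 2$, so $\omega = \omega_2/\omega_1 = \pm \tfrac12$. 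The goal is balanced fractional revival at time $\pi/4$, i.e. $\Omega = 4$. Then $4\omega/\Omega = 4 \cdot (\pm \tfrac12)/4 = \pm \tfrac12$, which is \emph{not} an integer, so I must instead run the characterization with the scaling absorbed differently: write $A_2 \pm 2A_1 = \tilde A$ directly and track that $\tilde A$ has eigenvalues $\theta_s = p_2(s) \pm 2 p_1(s)$.

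So the cleaner route is to not divide through at all but to reuse Proposition~\ref{prop:gfr_tight} directly on $\tilde A = A_2 \pm 2A_1$. First I would compute the spectral gaps: from $p_1(s) = n - 2s$ and $p_2(s) = \binom{n}{2} - 2s(n-1) + 4\binom{s}{2}$, one gets $\theta_0 - \theta_1 = (\theta_0^{(2)} - \theta_1^{(2)}) \pm 2(\theta_0^{(1)} - \theta_1^{(1)}) = 2(n-1) \pm 2\cdot 2 = 2(n-1) \pm 4$, and $\theta_s - \theta_{s+2} = 4(n - 2s - 2) \pm 2 \cdot 4 = 4(n-2s-2) \pm 8$. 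For $n = 2m$ these become $\theta_0 - \theta_1 = 4m - 2 \pm 4$ and $\theta_s - \theta_{s+2} = 8m - 8s - 8 \pm 8$. Setting $\Omega = 4$: the second family equals $8(m - s - 1 \pm 1)$, which is $2 h_s \cdot 4$ with $h_s = m - s - 1 \pm 1$, an integer — good. The first equals $4m - 2 \pm 4 = 4(m \pm 1) - 2 = (h + \tfrac12) \cdot 4$ with $h = m \pm 1 - 1 = m$ or $m - 2$, an integer — good. Finally choose $\zeta \equiv -\theta_0 \pi / 4 \mp \pi/4 \pmod{2\pi}$ to satisfy the phase condition; since $\theta_0 = \binom{2m}{2} \pm 4m$ is an integer this is consistent. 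Hence Proposition~\ref{prop:gfr_tight} applies and $A_2 \pm 2A_1$ has balanced fractional revival at $\pi/4$.

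For the parenthetical claim about $\tfrac12 A_2 \pm A_1$ at time $\pi/2$, I would repeat the same bookkeeping with $\tilde A = \tfrac12 A_2 \pm A_1$, so $\theta_s = \tfrac12 p_2(s) \pm p_1(s)$, giving $\theta_0 - \theta_1 = (n-1) \pm 2$ and $\theta_s - \theta_{s+2} = 2(n - 2s - 2) \pm 4$. With $n = 2m$ and $\Omega = 2$: the gap family is $2(2m - 2s - 2) \pm 4 = 4(m - s - 1 \pm 1) = 2 h_s \Omega$ with $h_s = m - s - 1 \pm 1$; and $\theta_0 - \theta_1 = 2m - 1 \pm 2 = 2(m \pm 1) - 1 = (h + \tfrac12)\Omega$ with $h = m \pm 1 - 1$, an integer. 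Again set $\zeta$ by the phase condition. Both parts then follow immediately from Proposition~\ref{prop:gfr_tight}. I anticipate the only genuine subtlety is keeping the sign conventions straight — in particular confirming that the $-$ sign case does not spoil the integrality of $h$, $h_s$, or $\zeta$, which the computation above shows it does not — so there is no real obstacle beyond careful arithmetic.
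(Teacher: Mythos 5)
Your proof is correct, and the arithmetic checks out in all four sign/claim combinations. It differs from the intended route only in which result it invokes: the corollary is meant to follow in one line from the immediately preceding proposition (the criterion that $\omega A_2+A_1$ has balanced fractional revival at time $\pi/\Omega$ iff $4\omega/\Omega$ is an odd integer and $4/\Omega$ is an integer of the same parity as $n$), whereas you fall back to the more general Proposition~\ref{prop:gfr_tight} and verify the eigenvalue-gap conditions by hand. The place where you thought the criterion failed is worth clearing up: when you normalize $A_2\pm 2A_1=\pm 2\left(\pm\tfrac12 A_2+A_1\right)$, the time rescales inversely with the Hamiltonian, since $e^{-\ii (\pi/4)(A_2\pm 2A_1)}=e^{-\ii(\pi/2)(\pm\frac12 A_2+A_1)}$ up to the harmless global conjugation in the minus case. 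So the correct parameters are $\omega=\pm\tfrac12$ and $\Omega=2$ (not $\Omega=4$), giving $4\omega/\Omega=\pm 1$ odd and $4/\Omega=2$ even, matching the parity of $n=2m$; this simultaneously proves the parenthetical claim and explains why the paper states it. Your direct verification buys nothing extra here beyond independence from the intermediate proposition, but it is a valid proof and essentially reproduces, for this special case, the computation that proves that proposition.
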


\begin{corollary}
For any integer $m$, $A_{2} \pm A_{1}$ in the Bose-Mesner algebra of $\HH(2m+1,2)$
has balanced fractional revival at time $\pi/4$.
\qed
\end{corollary}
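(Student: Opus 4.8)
The plan is to apply the preceding Proposition characterizing balanced fractional revival for $\omega A_2 + A_1$ in $\HH(n,2)$, specializing to the two stated families. Recall that proposition states that $\omega A_2 + A_1$ has balanced fractional revival at time $\pi/\Omega$ if and only if $4\omega/\Omega$ is an odd integer and $4/\Omega$ is an integer with the same parity as $n$. So the entire argument reduces to checking, in each case, that the specific values of $\omega$, $\Omega$, and $n$ satisfy these two arithmetic conditions.

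For the first corollary, I would take $n = 2m$ (so $n$ is even), $\omega = \pm 2$, and $\Omega = 4$ (so $\pi/\Omega = \pi/4$). Then $4\omega/\Omega = 4(\pm 2)/4 = \pm 2$ — but wait, that is even, not odd, so I must instead read $A_2 \pm 2A_1$ as $\pm 2(\tfrac12 A_2 \pm A_1)$, i.e. scale so that the coefficient of $A_1$ is $1$: write $A_2 \pm 2A_1 = \pm 2\left(\pm\tfrac12 A_2 + A_1\right)$. Scaling the Hamiltonian by a nonzero constant $c$ rescales time by $1/c$, so $A_2 + 2A_1 = 2(\tfrac12 A_2 + A_1)$ has balanced fractional revival at $\pi/\Omega$ iff $\tfrac12 A_2 + A_1$ does at $2\pi/\Omega$; thus I apply the proposition to $\omega = \tfrac12$, with target time $\pi/2$, i.e. $\Omega = 2$. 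Then $4\omega/\Omega = 4\cdot\tfrac12/2 = 1$, which is odd, and $4/\Omega = 2$, which is even, matching the parity of $n = 2m$. This simultaneously gives the parenthetical claim that $\tfrac12 A_2 \pm A_1$ has balanced fractional revival at $\pi/2$, and rescaling back (multiply Hamiltonian by $2$, divide time by $2$) gives $A_2 \pm 2A_1$ at $\pi/4$. The signs are handled uniformly since the proposition only cares about $4\omega/\Omega$ being an odd integer, and $\omega = \pm\tfrac12$ gives $\pm 1$, both odd.

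For the second corollary, I would take $n = 2m+1$ (odd), $\omega = \pm 1$, and $\Omega = 4$. Then $4\omega/\Omega = \pm 1$, which is an odd integer, and $4/\Omega = 1$, which is odd, matching the parity of $n = 2m+1$. So the proposition applies directly and yields balanced fractional revival at time $\pi/4$. No rescaling trick is needed here because the coefficient of $A_1$ is already $1$.

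There is essentially no obstacle: both corollaries are immediate consequences of the proposition once the parameters are read off correctly. The only subtlety worth flagging explicitly in the writeup is the scaling/rescaling step in the first corollary, since $A_2 \pm 2A_1$ does not itself have $A_1$-coefficient equal to $1$; I would state once the general principle that multiplying $A$ by a nonzero scalar $c$ and dividing $\tau$ by $c$ preserves $U(\tau)$, and then the rest is a one-line verification of the two divisibility/parity conditions in each case.
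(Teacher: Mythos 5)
Your argument for this corollary is correct and follows exactly the paper's (implicit) route: the statement is left as an immediate consequence of the preceding proposition, and your check with $n=2m+1$, $\omega=\pm1$, $\Omega=4$ (so $4\omega/\Omega=\pm1$ is odd and $4/\Omega=1$ is odd, matching the parity of $n$) is precisely the intended verification. The only hair worth splitting is that $A_2-A_1$ is $-\left((-1)A_2+A_1\right)$, so the minus case additionally uses the harmless fact (already covered by your scaling principle with $c=-1$) that negating the Hamiltonian conjugates $U(\tau)$ and hence preserves balanced fractional revival.
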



Finally, we consider $\omega_{2}A_{2} + \omega_{1}A_{1}$ when $\omega_{1} = 0$.
The graph $A_2 \in \HH(n,2)$ has two connected components, one with vertices 
of even weights while the other consists of vertices of odd weights. We call the component containing 
vertices with even weights the {\em halved $n$-cube} \cite{bcn89}.

\begin{proposition}
In $\HH(n,2)$, $X_2$ has balanced fractional revival if and only if $n$ is even.
The time of balanced fractional revival is $\pi/4$.
\begin{proof}
From Proposition~\ref{prop:fr_hamming}, $X_2$ has balanced fractional revival at time $\pi/2^k$  if and only if
\[n-1 \equiv \pm 2^{k-2} \pmod{2^k} 
\quad\text{and} \quad n-2 \equiv 0 \pmod{2^{k-1}}.
\]
These equations hold exactly when $n$ is even and $k=2$.
\end{proof}
\end{proposition}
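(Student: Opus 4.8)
The plan is to apply Proposition~\ref{prop:fr_hamming} directly to the single distance graph $X_2 \in \HH(n,2)$, i.e. the case $\ell = 1$, $r_1 = 2$. Since $2 < n$ for $n \ge 3$ (and the case $n = 2$ is degenerate since $X_2 = X_n$ there and must be excluded anyway), the hypothesis $X \ne X_n$ is satisfied. Substituting $r_1 = 2$ into conditions~(\ref{eqn:uno}) and (\ref{eqn:dos}) of Proposition~\ref{prop:fr_hamming}, the relevant binomial sums collapse to single terms: $\sum_i \binom{n-1}{r_i - 1} = \binom{n-1}{1} = n-1$, and for $j = 1,\ldots,k-1$, $\sum_i \binom{n-1-j}{r_i - j} = \binom{n-1-j}{2-j}$, which equals $\binom{n-2}{1} = n-2$ when $j = 1$ and equals $\binom{n-1-j}{2-j}$ with $2 - j < 0$ (hence $0$) when $j \ge 2$. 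Thus the conditions reduce to: $n - 1 \equiv \pm 2^{k-2} \pmod{2^k}$ and (only the $j=1$ instance is non-vacuous) $n - 2 \equiv 0 \pmod{2^{k-1}}$, exactly the two displayed congruences in the proof.

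Next I would solve this small system of congruences. From $n - 2 \equiv 0 \pmod{2^{k-1}}$ we get that $n$ is even and $n - 2 = 2^{k-1} c$ for some integer $c$; in particular $\alpha_2(n-2) \ge k-1$. From $n - 1 \equiv \pm 2^{k-2} \pmod{2^k}$, note that $n-1$ is odd, and reducing modulo $2^{k-1}$ gives $n - 1 \equiv \pm 2^{k-2} \pmod{2^{k-1}}$; combined with $n - 1 \equiv 1 \pmod{2^{k-1}}$ (since $n - 2 \equiv 0 \pmod{2^{k-1}}$), we need $\pm 2^{k-2} \equiv 1 \pmod{2^{k-1}}$, which forces $2^{k-2} = 1$, i.e. $k = 2$. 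With $k = 2$ the first condition becomes $n - 1 \equiv \pm 1 \equiv 1 \pmod 4$, i.e. $n \equiv 2 \pmod 4$ — but wait, one also needs to double-check the $-2^{k-2}$ branch: $n - 1 \equiv -1 \equiv 3 \pmod 4$ would give $n \equiv 0 \pmod 4$, and the second condition $n - 2 \equiv 0 \pmod{2} $ holds for all even $n$. So in fact both $n \equiv 0$ and $n \equiv 2 \pmod 4$ work, i.e. the conclusion is simply that $n$ is even, $k = 2$, and the time is $\pi/2^k = \pi/4$.

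I expect the only subtle point — and the thing worth stating carefully rather than the ``routine calculation'' that the system itself is — is the argument that forces $k = 2$, namely ruling out $k \ge 3$. The clean way is the reduction-mod-$2^{k-1}$ step above: for $k \ge 3$, $2^{k-2}$ is even while the first congruence demands $n-1 \equiv \pm 2^{k-2}$, yet the second congruence pins $n - 1$ to be $\equiv 1 \pmod{2^{k-1}}$, and $1$ is not congruent to an even number mod $2^{k-1}$ when $k \ge 3$. Everything else (the binomial sums collapsing, the $j \ge 2$ conditions being vacuous because $\binom{n-1-j}{2-j} = 0$, and the final bookkeeping that $n$ even suffices) is immediate. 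The converse direction — that $n$ even actually yields balanced fractional revival — is automatic since we verified both congruences hold at $k = 2$ for every even $n$, and Proposition~\ref{prop:fr_hamming} is an ``if and only if'', so condition~(\ref{eqn:tres}) just determines the phase $\zeta$ and imposes no further restriction.
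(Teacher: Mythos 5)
Your proposal follows the same route as the paper: specialize Proposition~\ref{prop:fr_hamming} to $\ell=1$, $r_1=2$, obtain the two congruences $n-1\equiv\pm2^{k-2}\pmod{2^k}$ and $n-2\equiv0\pmod{2^{k-1}}$, and solve them; your mod-$2^{k-1}$ argument forcing $k=2$ is exactly the bookkeeping the paper leaves implicit in ``these equations hold exactly when $n$ is even and $k=2$,'' so the substance matches. One small slip: your claim that the $j\ge2$ instances of condition~(\ref{eqn:dos}) vanish because $\binom{n-1-j}{2-j}=0$ is false for $j=2$, where the term is $\binom{n-3}{0}=1$ (the binomial is zero only for $j\ge3$). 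This is harmless here --- those instances only exist when $k\ge3$, which your other congruences already exclude, and in fact for $k\ge3$ the $j=2$ condition reads $1\equiv0\pmod{2^{k-2}}$ and fails outright, giving an even quicker way to rule out $k\ge3$ --- but as written the reduction of the full condition set to just the two displayed congruences is not justified for $k\ge3$.
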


When $n$ is odd,  each vertex $a$ and its antipodal pair $\bar{a}$,
(that is, the neighbour of $a$ in $X_n$)
belong to different components of $A_2$; thus, fractional revival does not occur.
When $n$ is even,  fractional revival occurs on each connected component of $A_{2}$.
(See Bernard \etal \cite{bcltv18} for a similar treatment.)


\subsection{Larger Spans}

We consider balanced fractional revival on the larger spans involving the first four Hamming graphs,
namely, $\spn\{A_1,A_2,A_3\}$ and $\spn\{A_1,A_2,A_3,A_4\}$.

\begin{proposition}
For $n \equiv 3 \pmod{4}$, the sum $A_1 + \frac{1}{2} A_2 + \frac{1}{4} A_3$ in $\HH(n,2)$ has fractional revival at time $\pi$.
\end{proposition}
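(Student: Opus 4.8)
The plan is to apply Proposition~\ref{prop:gfr_tight} with $n$ of the form $4k+3$, $\Omega = 1$, and the weighted adjacency matrix $A = A_1 + \tfrac12 A_2 + \tfrac14 A_3$. First I would write $A = \sum_{s=0}^n \theta_s E_s$ with $\theta_s = p_1(s) + \tfrac12 p_2(s) + \tfrac14 p_3(s)$, and then compute the two types of eigenvalue differences that appear in the proposition. For $\theta_0 - \theta_1$ we have, using Proposition~\ref{prop:krawtchouk}(i), that $p_r(1) - p_r(0) = -2\binom{n-1}{r-1}$, so $\theta_0 - \theta_1 = 2\binom{n-1}{0} + \binom{n-1}{1} + \tfrac12\binom{n-1}{2} = 2 + (n-1) + \tfrac{(n-1)(n-2)}{4}$. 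The goal is to show this equals $h + \tfrac12$ for an integer $h$ when $n \equiv 3 \pmod 4$: writing $n - 1 = 4k+2$, one checks $\tfrac{(n-1)(n-2)}{4} = \tfrac{(4k+2)(4k+1)}{4} = \tfrac{(2k+1)(4k+1)}{2}$, which is a half-integer since $(2k+1)(4k+1)$ is odd, and the remaining terms $2 + (4k+2)$ are integers, so the sum is indeed a half-integer, i.e.\ of the form $h + \tfrac12$.

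Next I would handle the second-difference conditions $\theta_s - \theta_{s+2} = 2h_s$ for all $s = 0,\ldots,n-2$ (here $\Omega = 1$). Using Proposition~\ref{prop:krawtchouk}(iii), $p_r(s) - p_r(s+2) = 4\sum_{h=0}^{r-1}(-2)^h \binom{n-2-h}{r-1-h}\binom{s}{h}$, which is divisible by $4$ for every $r \ge 1$. Hence $\theta_s - \theta_{s+2} = \big(p_1(s)-p_1(s+2)\big) + \tfrac12\big(p_2(s)-p_2(s+2)\big) + \tfrac14\big(p_3(s)-p_3(s+2)\big)$ is a sum of an integer divisible by $4$, half of an integer divisible by $4$ (hence an even integer), and a quarter of an integer divisible by $4$ (hence an integer); I then need to check this total is even. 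Explicitly $p_1(s) - p_1(s+2) = 4$, $p_2(s)-p_2(s+2) = 4\big((n-2) - 2s\big)$ (twice that over $2$ gives $2(n-2-2s)$, even), and $p_3(s)-p_3(s+2) = 4\big(\binom{n-2}{2} - 2s(n-3) + 4\binom{s}{2}\big)$, a quarter of which is $\binom{n-2}{2} - 2s(n-3) + 4\binom{s}{2}$; so I must verify $4 + 2(n-2-2s) + \binom{n-2}{2} - 2s(n-3) + 4\binom{s}{2}$ is even for $n \equiv 3 \pmod 4$, which reduces modulo $2$ to $\binom{n-2}{2} \equiv 0 \pmod 2$, and with $n - 2 = 4k+1$ we get $\binom{4k+1}{2} = \tfrac{(4k+1)(4k)}{2} = 2k(4k+1)$, which is even. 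Good.

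Finally I would address the phase: Proposition~\ref{prop:gfr_tight} requires $\zeta + \theta_0 \pi / \Omega \equiv \mp\pi/4 \pmod{2\pi}$, which with $\Omega = 1$ just determines $\zeta$ modulo $2\pi$ from $\theta_0 = p_1(0) + \tfrac12 p_2(0) + \tfrac14 p_3(0) = n + \tfrac12\binom n2 + \tfrac14\binom n3$; there is nothing to prove here beyond noting such a $\zeta$ exists. Putting the three computations together and invoking Proposition~\ref{prop:gfr_tight} (with $\Omega = 1$, so the revival time is $\pi/\Omega = \pi$) completes the proof, with the understanding that ``fractional revival at time $\pi$'' in the statement means the balanced $(\cos\pi/4, \pm\ii\sin\pi/4)$-revival of the proposition.

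\textbf{Main obstacle.} The computations are all elementary binomial/parity arguments, so the only real care is bookkeeping: correctly evaluating the Krawtchouk differences (especially for $p_3$) and tracking which terms contribute a half-integer versus an integer versus an even integer, so that exactly the $\tfrac12 p_2$-coefficient (or more precisely the $\theta_0-\theta_1$ term) produces the required $+\tfrac12$ while every second difference stays an even integer. The parity hypothesis $n \equiv 3 \pmod 4$ is used in two places — once to make $(n-1)(n-2)/4$ a genuine half-integer (not an integer), and once to kill $\binom{n-2}{2} \bmod 2$ in the second-difference check — and I would make sure both uses are flagged explicitly rather than buried in the arithmetic.
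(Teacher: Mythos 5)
Your proposal is correct, but it takes a genuinely different route from the paper's. The paper argues modularly: by Corollary~\ref{cor:3mod4}, $X_3$ already has balanced fractional revival at time $\pi/4$, hence $\tfrac14 A_3$ has it at time $\pi$; it then only needs to check that $A_1$ is periodic at time $\pi$ and $A_2$ at time $\pi/2$ (i.e.\ that $2$ divides $\gcd\{p_1(s)-p_1(0)\}$ and $4$ divides $\gcd\{p_2(s)-p_2(0)\}$), so that $e^{-\ii\pi A_1}$ and $e^{-\ii\pi A_2/2}$ are scalar multiples of $I$; since the three summands commute, the exponential factors and these scalars do not disturb the revival coming from $\tfrac14 A_3$. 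You instead verify the spectral conditions of Proposition~\ref{prop:gfr_tight} directly for the full weighted matrix with $\Omega=1$, computing $\theta_0-\theta_1$ and $\theta_s-\theta_{s+2}$ from the Krawtchouk identities in Proposition~\ref{prop:krawtchouk}; your arithmetic is right (in particular $\tfrac12\binom{n-1}{2}=\tfrac{(2k+1)(4k+1)}{2}$ supplies the required half-integer, and the second differences reduce mod $2$ to $\binom{n-2}{2}=2k(4k+1)\equiv 0$), and the hypothesis $n\equiv 3\pmod 4$ enters in essentially the same two places in both arguments. The paper's reduction is shorter and makes transparent which summand is responsible for the revival versus a mere global phase; your direct computation is self-contained, proves the (stronger) balanced version explicitly, and yields the phase $\zeta$ as a by-product.
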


\begin{proof}
By Corollary \ref{cor:3mod4}, it suffices to show that $A_1$ and $A_2$ are periodic at time $\pi$ and $\pi/2$, respectively. Since 
\[p_1(s) = n-2s\]
and
\[p_2(s) = \binom{n}{2} - 2s(n-1) + 4\binom{s}{2},\]
we see that $2$ divides 
\[\gcd\big\{p_1(s) - p_1(0)\big\}_{s=0}^n,\]
and $4$ divides
\[\gcd\big\{p_2(s) - p_2(0)\big\}_{s=0}^n.\]
\end{proof}

\begin{proposition}
For $n\equiv 3\pmod{8}$, the sum $A_1 + \frac{1}{2} A_2 + \frac{1}{4} A_3 + \frac{1}{8} A_4$ in $\HH(n,2)$ has fractional revival at time $\pi$.
\end{proposition}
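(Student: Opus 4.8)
The plan is to follow the strategy of the previous proposition: factor the transition matrix at time $\pi$, using that $A_1,A_2,A_3,A_4$ all lie in the commutative Bose--Mesner algebra of $\HH(n,2)$, so that
\[
U_X(\pi)=e^{-\ii\pi\left(A_1+\frac12 A_2+\frac14 A_3+\frac18 A_4\right)}
= e^{-\ii\pi A_1}\,e^{-\ii\frac{\pi}{2}A_2}\,e^{-\ii\frac{\pi}{4}A_3}\,e^{-\ii\frac{\pi}{8}A_4}.
\]
It then suffices to show that $X_1$ is periodic at time $\pi$, that $X_2$ is periodic at time $\pi/2$, and that $X_4$ is periodic at time $\pi/8$; the first, second and fourth factors above are then scalar multiples of $I$. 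Since $n\equiv 3\pmod 8$ implies $n\equiv 3\pmod 4$, Corollary~\ref{cor:3mod4} gives $e^{-\ii\frac{\pi}{4}A_3}=e^{\ii\zeta_3}\left(\cos\frac{\pi}{4}\,I\pm\ii\sin\frac{\pi}{4}\,A_n\right)$, and absorbing the three unimodular scalars into the phase yields $U_X(\pi)=e^{\ii\zeta}\left(\cos\frac{\pi}{4}\,I\pm\ii\sin\frac{\pi}{4}\,A_n\right)$, i.e.\ balanced fractional revival at time $\pi$ (in particular, fractional revival).

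For the periodicity claims, note that the class $X_r$ of $\HH(n,2)$ is periodic at time $\pi/2^k$ if and only if $2^{k+1}$ divides $p_r(0)-p_r(s)$ for every $s$, since $e^{-\ii(\pi/2^k)A_r}=\sum_s e^{-\ii(\pi/2^k)p_r(s)}E_s$ is a scalar exactly when all these exponentials agree. Periodicity of $X_1$ at $\pi$ (here $p_1(0)-p_1(s)=2s$) and of $X_2$ at $\pi/2$ (here $p_2(0)-p_2(s)=2s(n-1)-4\binom{s}{2}$, divisible by $4$ because $n$ is odd) were already observed in the previous proposition. The only new input is periodicity of $X_4$ at $\pi/8$, that is, $16\mid p_4(0)-p_4(s)$ for all $s$. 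By Proposition~\ref{prop:krawtchouk}(i),
\[
p_4(0)-p_4(s)=2s\binom{n-1}{3}-4\binom{n-2}{2}\binom{s}{2}+8(n-3)\binom{s}{3}-16\binom{s}{4},
\]
and since $n\equiv 3\pmod 8$ we have $\alpha_2(n-2)=0$, $\alpha_2(n-1)=1$ and $\alpha_2(n-3)\ge 3$, whence $\alpha_2\big(\binom{n-1}{3}\big)=\alpha_2(n-3)\ge 3$ and $\alpha_2\big(\binom{n-2}{2}\big)=\alpha_2(n-3)-1\ge 2$. Plugging these in, each of the four summands above has $2$-adic valuation at least $4$, which is exactly what is needed. (Equivalently, $8\mid\binom{n-1}{3}$, $4\mid\binom{n-2}{2}$ and $2\mid n-3$ are instances of Kummer's theorem, Theorem~\ref{thm:kummer}, counting carries in the relevant base-$2$ additions.)

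The routine parts are the factorization of $U_X(\pi)$ and the invocation of the already-established balanced fractional revival of $X_3$. The only step needing a bit of care is the $2$-adic bookkeeping for the four terms of $p_4(0)-p_4(s)$, in particular the identity $\alpha_2\big(\binom{n-1}{3}\big)=\alpha_2(n-3)$ for odd $n$.
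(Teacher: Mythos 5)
Your proof is correct and follows essentially the same route as the paper: reduce to the previous proposition (handling $A_1$, $A_2$, $A_3$ via periodicity and Corollary~\ref{cor:3mod4}) and then verify that $X_4$ is periodic at time $\pi/8$ by checking $16 \mid p_4(0)-p_4(s)$ term by term, which is exactly the paper's divisibility computation for $n=8m+3$. Your $2$-adic bookkeeping ($\alpha_2\big(\binom{n-1}{3}\big)=\alpha_2(n-3)\ge 3$, etc.) matches the paper's direct factorizations, and your explicit statement of the commuting-exponential factorization only spells out what the paper leaves implicit.
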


\begin{proof}
From the previous result, it suffices to show that $X_4$ is periodic at time $\pi/8$. Since 
\[p_4(s) = \sum_{j=0}^4 (-2)^j\binom{n-j}{4-j}\binom{s}{j},\]
we have
\[p_4(s) - p_4(0) = -2\binom{n-1}{3}s + 4\binom{n-2}{2}\binom{s}{2} - 8(n-3)\binom{s}{3} +16\binom{s}{4}.\]
If $n=8m+3$, then
\[
2\binom{n-1}{3} = \frac{(8m+2)(8m+1)(8m)}{3},\quad
4\binom{n-2}{2}=2(8m+1)(8m), \quad \text{and}\quad
8(n-3)
\]
are all divisible by $16$.
\end{proof}


\section{Conclusion}

The main achievement of this work is the characterization of graphs in association schemes admitting quantum fractional revival in terms of their spectra, and the discovery of several infinite families of graphs exhibiting quantum fractional revival. From the mathematical point of view, we are bridging the fields of algebraic graph theory, specifically what concerns association schemes and orthogonal polynomials, to elementary number theory and the study of certain periodic functions that arise naturally in quantum information. For quantum information theory, our findings may turn to be quite useful for entanglement generation procedures or other tasks that require a network of many interacting qubits to be put in a state with several pairs of maximally entangled qubits.

Following the work in this paper, we raise the following questions:
\begin{enumerate}[(1)]
    \item Study the more general notion of fractional revival among several vertices in association schemes. We are preparing an upcoming publication related to this topic. 
    \item Extend the results for the Hamming Scheme to other cubelike graphs.
\end{enumerate}


\section*{Acknowledgments}

The research of L.V. is supported by a discovery grant from
the National Science and Engineering Research Council (NSERC) of Canada.




\begin{thebibliography}{99}

\bibitem{bcltv18}
{\sc P.-A. Bernard}, {\sc A. Chan}, {\sc \'{E}. Loranger}, {\sc C. Tamon}, {\sc L. Vinet}.
\newblock A graph with fractional revival.
\newblock {\em Physics Letters A}, {\bf 382}(5):259-264, 2018.

\bibitem{b03}
{\sc S. Bose}.
\newblock Quantum Communication through an Unmodulated Spin Chain.
\newblock {\em Physical Review Letters}, {\bf 91}(20):207901, 2003.

\bibitem{bcn89}
{\sc A.E. Brouwer}, {\sc A.M. Cohen}, {\sc A. Neumaier}.
\newblock {\em Distance-Regular Graphs}.
\newblock Springer-Verlag, 1989.

\bibitem{brouwerhaemers}
A.E. Brouwer and W.H. Haemers.
\newblock {\em Spectra of graphs}.
\newblock Universitext. Springer, New York, 2012.

\bibitem{bgmrt13}
{\sc J. Brown}, {\sc C. Godsil}, {\sc D. Mallory}, {\sc A. Raz}, {\sc C. Tamon}.
\newblock Perfect State Transfer on Signed Graphs.
\newblock {\em Quantum Information and Computation} {\bf 13}(5\&6):511-530, 2013.

\bibitem{chan}
{\sc A. Chan}.
\newblock Complex Hadamard Matrices, Instantaneous Uniform Mixing and Cubes.
\newblock arXiv:1305.5811 [math.co].

 

\bibitem{cctvz}
{\sc A. Chan}, {\sc G. Coutinho}, {\sc C. Tamon}, {\sc L. Vinet}, {\sc H. Zhan}.
\newblock Quantum Fractional Revival on Graphs.
\newblock
{\em Discrete Applied Mathematics}, doi:10.1016/j.dam.2018.12.017


\bibitem{cg02}
{\sc T.X. Cai}, {\sc A. Granville}.
\newblock On the residues of binomial coefficients and their products modulo prime powers.
\newblock {\em Acta Mathematica Sinica} (English Series) {\bf 18}(2):277-288, 2002.

\bibitem{cggv15}
{\sc G. Coutinho}, {\sc C. Godsil}, {\sc K. Guo}, {\sc F. Vanhove}.
\newblock Perfect state transfer on distance regular graphs and association schemes.
\newblock {\em Linear Algebra and Its Applications}, {\bf 478}:108-130, 2015.

\bibitem{cs90}
{\sc L. Chihara}, {\sc D. Stanton}.
\newblock Zeros of generalized Krawtchouk polynomials.
\newblock {\em Journal of Approximation Theory} {\bf 60}(1):43-57, 1990.

\bibitem{cvz17}
{\sc M. Christandl}, {\sc L. Vinet}, {\sc A. Zhedanov}.
\newblock Analytic next-to-nearest-neighbor XX models with perfect state transfer and fractional revival.
\newblock {\em Physical Review A}, {\bf 96}(3), 032335, 2017.

\bibitem{dickson}
{\sc L.E. Dickson}.
\newblock {\em History of the Theory of Numbers, Volume 1: Divisibility and Primality}.
\newblock Chelsea, 1952.

\bibitem{fg98}
{\sc E. Farhi}, {\sc S. Gutmann}.
\newblock Quantum computation and decision trees.
\newblock {\em Physical Review A}, {\bf 58}:915, 1998.

\bibitem{gvz16}
{\sc V. Genest}, {\sc L. Vinet}, {\sc A. Zhedanov}.
\newblock Quantum spin chains with fractional revival.
\newblock {\em Annals of Physics} {\bf 371}:348-367, 2016.

\bibitem{g12}
{\sc C. Godsil}.
\newblock {State Transfer on Graphs}.
\newblock {\em Discrete Mathematics}, {\bf 312}, 123-147, 2012.

\bibitem{gr01}
{\sc C. Godsil}, {\sc G. Royle}.
\newblock {\em Algebraic Graph Theory}.
\newblock Springer, 2001.

\bibitem{godsilsmith}
{\sc C. Godsil}, {\sc J. Smith}.
\newblock Strongly Cospectral Vertices.
\newblock arXiv:1709.07975 [math.co].

\bibitem{ms77}
{\sc F.J. MacWilliams}, {\sc N.J.A. Sloane}.
\newblock {\em The Theory of Error-Correcting Codes}.
\newblock North-Holland, 1977.


\bibitem{nc00}
{\sc M. Nielsen}, {\sc I. Chuang}.
\newblock {\em Quantum Computation and Quantum Information}.
\newblock Cambridge University Press, 2000.

\bibitem{pk11}
{\sc P.J. Pemberton-Ross}, {\sc A. Kay}.
\newblock Perfect Quantum Routing in Regular Spin Networks.
\newblock {\em Physical Review Letters} {\bf 106}, 020503, 2011.

\bibitem{s01}
{\sc D. Stanton}.
\newblock {Orthogonal polynomials and combinatorics}.
\newblock In {\em Special Functions 2000: Current perspective and future directions}, 
	J. Boustoz, M.E.H. Ismail, S. Suslov (eds.), NATO science series, Vol. 30, 389-409, Springer, 2001.



\end{thebibliography}
\end{document}
